\newcommand \R {{ \mathbb R}}
\newcommand \C {{ \mathbb C}}
\newcommand \Z {{ \mathbb Z}}
\newcommand \Q {{ \mathbb Q}}
\newcommand \re {{ \operatorname{Re} }}
\newcommand \im {{ \operatorname{Im} }}
\newcommand \Cal { \mathcal }
\newcommand{\F}{\mathcal F}
\renewcommand{\H}{\mathcal H}
\renewcommand{\L}{\mathcal L}
\newcommand{\<}{{\langle}}
\renewcommand{\>}{{\rangle}}
\newcommand{\SL}{\operatorname{SL}(2,{\mathbb R})}
\newcommand{\SO}{\operatorname{SO}(2,{\mathbb R})}
\newtheorem {lemma} {Lemma}
\newtheorem {theorem} {Theorem}
\newtheorem{corollary}{Corollary}  
\newtheorem{remark}{Remark}
\newtheorem{definition}{Definition}
\title[Non-uniform hyperbolicity of the Kontsevich--Zorich cocycle]%
{A geometric criterion \\ for the non-uniform hyperbolicity \\ of the Kontsevich--Zorich cocycle}
 \author{Giovanni Forni \\ with an Appendix by Carlos Matheus }
\address{Giovanni Forni \\ Department  of Mathematics\\ University of Maryland \\
  College Park, MD USA}
  \email{gforni@math.umd.edu}
\address{Carlos Matheus\\Coll\`ege de France\\  3, Rue d'Ulm\\ Paris, CEDEX 05, France}
\email{matheus@impa.br.}
\begin{document}
 
  \begin{abstract}
 We prove a geometric criterion on a $\SL$-invariant ergodic probability measure 
on the moduli space of holomorphic abelian differentials on Riemann surfaces 
for the non-uniform hyperbolicity of the Kontsevich--Zorich cocycle on the real Hodge
bundle. Applications include measures supported on the $\SL$-orbits of all algebraically 
primitive Veech surfaces (see also \cite{Bouw:Moeller}) and of all Prym eigenforms 
discovered in \cite{McMullen2}, as well as all canonical absolutely continuous 
measures on connected components of strata of the moduli space of abelian differentials 
(see also \cite{Ftwo}, \cite{Avila:Viana}). The argument simplifies and generalizes
our proof for the case of canonical measures \cite{Ftwo}. In an Appendix Carlos Matheus
discusses several relevant examples which further illustrate the power  and the limitations
of our criterion.
 \end{abstract}

 \maketitle
 
\section{Introduction} The non-uniform hyperbolicity of the Kontsevich--Zorich
cocycle with respect to the canonical absolutely continuous $\SL$-invariant probability
measures on the connected components of the moduli space of abelian differentials 
was originally conjectured by Zorich and Kontsevich. The conjecture was later proved by 
the author in \cite{Ftwo}. In this paper we develop the ideas of \cite{Ftwo} and prove a geometric 
criterion for the non-uniform hyperbolicity of the Kontsevich--Zorich cocycle with respect to a general 
$\SL$-invariant probability ergodic measure. This criterion yields a streamlined proof of the 
original argument in \cite{Ftwo} for the case of the canonical measures. In fact, this paper is an 
outgrowth of an unpublished note written to clarify and rectify parts of the original proof of Theorem
8.5 in \cite{Ftwo} (see also \cite{Krikorian}) in response to questions of the participants of the  \emph {Groupe de Travail in Geometry and Dynamics }of the Universit\'e de Paris VI during the academic year 2003/2004. We are extremely grateful to all the participants, in particular to Artur Avila,  
S\'ebastien Gou\"ezel and Rapha\"el Krikorian for their comments and questions. We are also very grateful to A.~Bufetov who encouraged us to formulate and write up the criterion we present below. 
Last but not least, we thank A.~Zorich and C.~Matheus for many fruitful discussions about the Kontsevich--Zorich spectrum which lead in particular to our joint work \cite{Forni:Matheus:Zorich:one},  \cite{Forni:Matheus:Zorich:two} and to the Appendix by C.~Matheus at the end of this paper.

The non-uniform hyperboliciy of the Kontsevich--Zorich cocycle is important in several
applications to the dynamics of translation flows and interval exchange transformations,
such as the fine behavior of ergodic averages and their deviations~\cite{Zorich1}-\cite{Zorich5}, \cite{Kontsevich}, \cite{Ftwo}, \cite{Bufetov1}, \cite{Bufetov2}. The non-vanishing of the second exponent Kontsevich--Zorich exponent is crucial in the approach of A.~Avila and the author to weak mixing for translation flows and interval exchange transformations \cite{Avila:Forni}. While the Lebesgue generic case is now well-understood, there are only a few scattered results on the Kontsevich--Zorich spectrum for generic translation flows with respect to $\SL$-invariant probability ergodic measures other than the canonical ones. Most of these results, for instance \cite{Bouw:Moeller} and  \cite{Forni:Matheus:Zorich:one} (as well as work in progress by C.~Matheus, M.~M\"oller and J.-C.~Yoccoz) are limited to $\SL$-invariant probability measures associated to Veech surfaces and leave open natural questions such as the non-uniform hyperbolicity of the Kontsevich--Zorich cocycle for strata of quadratic differentials. 
Finally, we should stress that, as far as we can see, our methods cannot yield the stronger property of \emph{simplicity} of the Kontsevich--Zorich spectrum, which was proved by A.~Avila and M.~Viana \cite{Avila:Viana} for the canonical absolutely continuous measures on the moduli space of abelian differentials. It would be interesting, in our opinion, to formulate geometric criteria for the simplicity of the spectrum based on Avila-Viana's approach. However, since there exist  non-uniformly hyperbolic $\SL$-invariant ergodic measures for which the spectrum is not simple, the criterion we propose in this paper has a somewhat different scope (while of course yielding a weaker property). For instance, for a particular sequence of square-tiled cyclic covers, the so-called ``stairs''  square-tiled cyclic covers (double covers of square-tiled ''stairs'' surfaces), all the Kontsevich--Zorich exponents of the associated 
$\SL$-invariant measure are non-zero and all but the top and bottom exponents are double (explicit values for all the exponents are computed in \cite{Eskin:Kontsevich:Zorich:cyclic}, Appendix B.1). In fact, the non-uniform hyperbolicity of the Kontsevich--Zorich cocycle for all ``stairs''  square-tiled surfaces  follows easily from our criterion (see \cite{Eskin:Kontsevich:Zorich:cyclic}, Appendix C, for the verification of the main condition of our criterion, the \emph{Lagrangian }property of the horizontal/vertical foliation introduced in \cite{Ftwo}, Definition 4.3, and recalled below in Definition \ref{def:LagrangianFol}). 
Other examples of  measures on strata of abelian differentials on surfaces of genus $3$ with multiple Kontsevich--Zorich exponents to which our criterion applies have been found by C.~Matheus. 
He communicated to the author that he can prove the following result: $(a)$ all measures on 
$\H(1,1,1,1)$ coming from a regular (unbranched) double cover construction over a square-tiled 
surface in the stratum $\H(1,1)$ and $(b)$ the measure on $\H(2,2)$ coming from a 
certain regular double cover construction over an $L$-shaped square-tiled surface with three
squares (in the stratum $\H(2))$ have a double second exponent. The details and the complete
argument for case $(b)$ are given in the Appendix  \ref{s.2covers} by C.~Matheus at the end of this
paper.

In genus $2$, the simplicity of the Kontsevich--Zorich spectrum for the canonical absolutely
continuous $\SL$-invariant probability measures on the space of abelian differentials was 
proved in \cite{Ftwo}. In unpublished work the author was able to extend this result
to all $\SL$-invariant probability ergodic measures. Later, M.~Bainbridge \cite{Bainbridge}
was able to establish the exact numerical values of the second exponent (the only non-trivial one) 
for all such measures. Bainbridge result, based on C.~McMullen's classification theorem 
for $\SL$-invariant probability measures in genus $2$ \cite{McMullen1}, implies in particular the simplicity of the Kontsevich--Zorich spectrum. 

\medskip
In order to state our geometric criterion we recall a few fundamental properties 
of probability measures invariant under the Teichm\"uller flow and introduce a couple 
of definitions. Let $\H_g$ be the moduli space of abelian differentials of unit
total area on Riemann surfaces of genus $g\geq 2$. The space $\H_g$ has
a natural stratification defined as follows. For any $\kappa:=(k_1, \dots, k_\sigma) 
\in \Z_+^\sigma$ such that $\sum k_i =2g-2$, the subset $\H(\kappa)\subset
\H_g$ of all abelian differentials which have exactly $\sigma \in \Z_+$ zeroes
$p_1, \dots, p_\sigma$ with multiplicities $k_1, \dots, k_\sigma$ is non-empty.
Each stratum $\H(\kappa)$ has the structure of an affine orbifold 
locally parametrized by the period map into the relative complex cohomology 
$H^1(S, \Sigma;\C)$ of the surface $S$ relative to zero set $\Sigma:=\{p_1, \dots, p_\sigma\}$.
The strata are in general not connected and can have up to three connected 
components \cite{Kontsevich:Zorich}. 

\smallskip
Every abelian differential $\omega\in \H_g$ induces a pair $(\F^h_\omega, 
\F^v_{\omega})$ of transverse orientable measured foliations on the topological
surface $S$ of genus $g\geq 2$, called respectively the  \emph{horizontal } and the 
\emph{vertical } foliation of the abelian differential. Such foliations are respectively defined as follows:
\begin{equation}
\F^h_\omega := \{\im \,\omega=0\} \quad \text{ and } \quad \F^v_\omega := \{ \re \,\omega=0\}\,.
\end{equation}
The transverse measure for the horizontal foliation $\F^h_\omega$ is defined by integration
along transverse arcs of the density $\vert \re \,\omega \vert$; the transverse measure for the vertical foliation $\F^v_\omega$ is defined by integration along transverse arcs of the density 
$\vert \im\, \omega \vert$.

We recall that the canonical action of the group $\SL$ on $\H_g$ can be defined as follows.
For any $A\in \SL$ and for any holomorphic differential $\omega$ on a Riemann surface
$S_\omega$, there exists a unique Riemann surface $S_{A\omega}$ and a unique abelian
differential $A\omega$ holomorphic on $S_{A\omega}$ such that 
$$
A\begin{pmatrix}  \re \,\omega \\ \im\, \omega \end{pmatrix} = 
\begin{pmatrix}  \re \,A\omega \\ \im \,A \omega \end{pmatrix} \,.
$$
The \emph{Teichm\"uller geodesic flow} is defined as the action of the diagonal subgroup
$\{g_t \}= \{\text{\rm diag} (e^t, e^{-t}) \} < SL(2, \R)$.  It is immediate by the definition that
strata are invariant under the action of $\SL$ on $\H_g$, hence under the
Teichm\"uller flow. In addition, it is possible to define a natural smooth $\SL$-invariant
measure on every stratum. A fundamental result by H.~Masur \cite{Masur2} and W.~Veech 
\cite{Veech1}, \cite{Veech2} states that every such measure has finite mass and is ergodic
 for the Teichm\"uller flow.

\smallskip
The Teichm\"uller flow admits two invariant foliations, $\Cal W^+$ and
$\Cal W^-$ on $\H_g$, which are (locally) defined as follows. The leaves $\Cal W^\pm(\omega)$ 
through any abelian differential $\omega\in \H_g$, are given by the formulas
\begin{equation}
\begin{aligned}
\Cal W^+(\omega) &:= \{ \tilde \omega \in \H_g \vert  \im\, \tilde \omega \in \R^+\cdot  \im\, \omega \} 
=  \{ \tilde \omega \in \H_g \vert  [\F^h_{\tilde \omega}] = [\F^h_\omega] \} \,,\\
\Cal W^-(\omega)& := \{ \tilde \omega \in \H_g \vert  \re\, \tilde \omega \in \R^+\cdot  \re\, \omega \} 
= \{ \tilde \omega \in \H_g \vert  [\F^v_{\tilde \omega}] = [\F^v_\omega] \} \,.
\end{aligned}
\end{equation}
Let $\Cal W^\pm_\kappa$ denote the intersections of the foliations $\Cal W^\pm$ with a stratum $\H(\kappa)$ of the moduli space. For any $\omega \in \H(\kappa)$, the intersection $\Cal W^+_\kappa
(\omega)\cap \Cal W^-_\kappa(\omega)$ coincides with the Teichm\"uller orbit $\{g_t \omega \vert t\in \R\} \subset \H(\kappa)$. For any pair $\omega^+$, $\omega^-\in \H(\kappa)$,  the intersection
$\Cal W^+_\kappa(\omega^+)\cap \Cal W^-_\kappa(\omega^-)$ is non-empty if and only if
the horizontal foliation $\F^h_{\omega^+}$ of $\omega^+$ and the vertical foliation $\F^v_{\omega^-}$ 
of $\omega^-$ are transverse (in the sense of measured foliations). In fact, a pair $(\F^h, \F^v)$
of measured foliations is transverse if and only there exists a holomorphic  abelian differential 
$\omega\in \H_g$ such that $\F^h(\omega) \in \R^+ \cdot \F^h$ and $\F^v(\omega) \in \R^+ \cdot \F^v$.

It is an immediate consequence of a result of \cite{Ftwo} (see Theorem \ref{thm:Tflowhyp} below) that every probability Teichm\"uller-invariant ergodic measure is non-uniformly hyperbolic, in the sense that the tangent cocycle of the Teichm\"uller flow restricted to the relevant stratum has non-zero Lyapunov exponents in all directions except for the flow direction. In fact, this result is a consequence of a stronger property: there exists on every stratum $\H(\kappa)\subset \H_g$ a Hodge Riemannian metric whose restrictions to the leaves of the foliations $\Cal W^+_\kappa$/ $\Cal W^-_\kappa$ is contracted by the backward/forward action of the Teichm\"uller flow (in all directions except for the flow direction), \emph{uniformly} on every compact subset of the (non-compact) space $\H(\kappa)$ (see \cite{Ftwo} and \cite{Athreya:Forni}). The Hodge Riemannian metrics have been explicitly constructed and studied in 
depth in \cite{Athreya:Bufetov:Eskin:Mirzakhani}.

\smallskip
We recall the definition of the Kontsevich--Zorich cocycle over the Teichm\"uller flow \cite{Kontsevich}, \cite{Ftwo}, a continuous-time version of the Rauzy--Veech--Zorich cocyle \cite{Rauzy}, \cite{Veech1}, \cite{Zorich2} over the Rauzy--Veech--Zorich map.

Let $\hat \H_g$ be the Teichm\"uller space of holomorphic abelian differentials of unit total area on Riemann surfaces of genus $g\geq 2$. It can be defined as the moduli space of abelian 
differentials on \emph{marked} Riemann surfaces.  Let $S$ denote the underlying smooth
surface of genus $g\geq 2$. Points of the the Teichm\"uller space $\hat \H_g$ are equivalence 
classes of holomorphic abelian differentials on the surface $S$, endowed with some holomorphic 
structure, with respect to the equivalence relation given by the natural action of the group 
$\text{Diff}^+_0(S)$ of orientation preserving diffeomorphisms isotopic to the identity. 
The moduli space $\H_g$ can be defined as the quotient $\hat \H_g/\Gamma_g$ of the 
Teichm\"uller space of holomorphic abelian differentials of unit total area with respect 
to the action of the \emph{mapping class group} $\Gamma_g:= \text{Diff}^+(S)/\text{Diff}^+_0(S)$ 
on $\hat \H_g$. The Teichm\"uller flow $\{g_t\}$ on the moduli space $\H_g$ lifts to a flow 
$\{\hat g_t\}$ on the Teichm\"uller space $\hat \H_g$.  Let $\{\hat \rho_t \}$ be the
trivial cocycle over the flow $\{\hat g_t\}$ on the trivial cohomology bundle 
$\hat \H_g \times H^1(S,\R)$ defined as follows:
$$
\hat \rho_t :=   \hat g_t \times \text{id} :  \hat \H_g \times H^1(S,\R) \to \hat \H_g \times H^1(S,\R)\,.
$$
The mapping class group $\Gamma_g$ acts on the trivial bundle $ \hat \H_g \times H^1(S,\R)$
by pull-back on each coordinate. The quotient bundle
\begin{equation}
\label{eq:Hodge}
H^1_g :=  \left(  \hat \H_g \times H^1(S,\R) \right) / \Gamma_g 
\end{equation}
is an orbifold vector bundle over the moduli space $\H_g$ of holomorphic abelian differentials
of unit total area called the (real) \emph{Hodge bundle}.  The \emph{Kontsevich--Zorich cocycle}
can be defined as the projection $\{\rho_t\}$ to the Hodge bundle $H^1_g$  of the trivial cocycle
$\{\hat \rho_t\}$. By definition, it is a cocycle over the Teichm\"uller geodesic flow $\{g_t\}$ on the moduli space $\H_g$ of holomorphic abelian differentials of unit total area. 

It is an immediate consequence of the definition that the top exponent of the Kontsevich--Zorich
cocycle is equal to $1$. In addition, since the action of the group $\text{Diff}^+(S)$ on $H^1(S, \R)$ 
is symplectic with respect to the standard symplectic structure given by the intersection form, the cocyle
is \emph{symplectic}, hence for any probability measure $\mu$ on the moduli space $\H_g$, 
invariant under the Teichm\"uller flow and ergodic, its \emph{Lyapunov spectrum} has $g$ 
non-negative and $g$ non-positive exponents (counting multiplicities) and it is symmetric 
with respect to the origin, that is, it has the form
\begin{equation}
\label{eq:KZspectrum}
\lambda_1^\mu=1 \geq \lambda_2^\mu \geq \dots \geq \lambda^\mu_g \geq 
-\lambda_g^\mu \geq \dots \geq -\lambda_2^\mu \geq  -\lambda_1^\mu=-1 \,.
\end{equation}

There is  a simple well-known relation between the Lyapunov spectrum of the Kontsevich--Zorich cocycle and the Lyapunov spectrum of the Teichm\"uller flow (that is, the Lyapunov spectrum of the
tangent cocycle) restricted to any stratum of the moduli space \cite{Zorich2}, \cite{Zorich5}, 
\cite{Kontsevich}, \cite{Ftwo}, \cite{ForniSurvey}. The Lyapunov spectrum of the Teichm\"uller flow with respect to any invariant, ergodic probability measure $\mu$ supported on a stratum $\H(\kappa) \subset \H_g$ of the moduli space can be written as follows in terms of the Lyapunov spectrum of the Kontsevich--Zorich cocycle:
\begin{equation}
\label{eq:Tflowexp}
\begin{aligned}
 2 & \geq (1+\lambda^{\mu}_2)\geq \cdots\geq (1+\lambda^{\mu}_g) \geq 
\overbrace{1=\cdots= 1}^{\sigma-1}  \geq  (1-\lambda^{\mu}_g) \geq \\
&\geq \cdots\geq (1-\lambda^{\mu}_2)\geq 0 \geq -(1-\lambda^{\mu}_2)\geq \cdots 
\geq -(1-\lambda^{\mu}_g) \geq \\
&\geq\underbrace{-1=\cdots= -1}_{\sigma-1}\geq -(1+\lambda^{\mu}_g)
\geq \cdots \geq -(1+\lambda^{\mu}_2)\geq  -2 \,\,. 
\end{aligned}
\end{equation}
It is immediate from the above formula that the non-uniform hyperbolicity of the Teichm\"uller
flow with respect to any ergodic probability measure $\mu$ on $\H(\kappa)$ is equivalent
to a \emph{spectral gap} property for the Kontsevich--Zorich cocycle, namely the strict
inequality $\lambda^\mu_2 < \lambda^\mu_1=1$. For a class of measures 
satisfying a certain integrability condition (class which includes all the canonical absolutely 
continuous measures), the non-uniform hyperbolicity of the Teichm\"uller  flow was proved by
W.~Veech in \cite{Veech2}. In \cite{Ftwo}, Corollary 2.2 (see also \cite{ForniSurvey}, Theorem 5.1), the author proved the following generalization of Veech's result:

\begin{theorem} 
\label{thm:Tflowhyp}
For any Teichm\"uller-invariant ergodic probability measure $\mu$ on
$\H_g$ the Kontsevich--Zorich cocycle has a spectral gap, that is,
$$
\lambda^\mu_2 < \lambda^\mu_1 =1\,,
$$
or, equivalently, any Teichm\"uller-invariant ergodic probability measure  is non-uniformly 
hyperbolic for the Teichm\"uller flow. 
\end{theorem}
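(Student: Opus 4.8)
The plan is to prove Theorem~\ref{thm:Tflowhyp} by a streamlined form of the argument of \cite{Ftwo}, resting on the variational formula for the Hodge norm together with a pointwise rigidity property of the \emph{Forni form}. Since $\mu$ is ergodic it is carried by a single stratum $\H(\kappa)$, and it suffices to prove the strict inequality $\lambda^\mu_2<1$, which by \eqref{eq:Tflowexp} is equivalent to the asserted non-uniform hyperbolicity. I would work over the Hodge bundle $H^1_g$ restricted to $\H(\kappa)$, with its Hodge metric; for $c\in H^1(S,\R)$ write $\pi_\omega(c)\in H^{1,0}_\omega$ for the holomorphic representative, so that $c=\pi_\omega(c)+\overline{\pi_\omega(c)}$ and $\|c\|^2_\omega$ is a fixed multiple of $\|\pi_\omega(c)\|^2_\omega$. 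In Teichm\"uller coordinates a Kontsevich--Zorich cocycle vector is a fixed class $c\in H^1(S,\R)$, and the analytic ingredient from \cite{Ftwo} is the identity (with $\omega_t:=g_t\omega$)
\[
\frac{d}{dt}\log\|c\|_{\omega_t}\;=\;-\,\re\,B_{\omega_t}(u_t,u_t),\qquad u_t:=\frac{\pi_{\omega_t}(c)}{\|\pi_{\omega_t}(c)\|_{\omega_t}},
\]
where $B_\omega(u,u)=\tfrac{i}{2}\int_S (u/\omega)^2\,\omega\wedge\bar\omega$, normalized so that $|B_\omega(u,u)|\le 1$ for $\|u\|_\omega=1$, with equality when $u\in\C\omega$. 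This gives at once $\lambda^\mu_i\in[-1,1]$ for all $i$, while a direct computation shows that $\lambda^\mu_1=1$ is realized by the tautological class $[\im\omega]$, whose Hodge norm grows like $e^t$ along the orbit; so only $\lambda^\mu_2<1$ remains.

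The geometric heart is the rigidity of the equality case: by the Cauchy--Schwarz inequality, $|B_\omega(u,u)|=\|u\|^2_\omega$ forces $(u/\omega)^2$ to have constant argument on $S$, and since $u/\omega$ is meromorphic this forces it to be constant, i.e.\ $u\in\C\omega$. Consequently, for every compact $K\subset\H(\kappa)$ and every $\delta>0$ there is $\varepsilon_K(\delta)>0$ with $|B_\omega(u,u)|\le 1-\varepsilon_K(\delta)$ whenever $\omega\in K$, $\|u\|_\omega=1$, and $u$ has distance $\ge\delta$ from the complex line $\C\omega$ in the projectivized fibre; this is a compactness argument, using continuity of $(\omega,u)\mapsto B_\omega(u,u)$ on the compact unit sphere bundle over $K$ together with the strict inequality just obtained.

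Then I would argue by contradiction. Suppose $\lambda^\mu_2=1$; then the Oseledets subspace $E(\omega)$ of exponent $1$ of the Kontsevich--Zorich cocycle has real dimension $\ge 2$ for $\mu$-a.e.\ $\omega$, and by equivariance it is a \emph{constant} subspace of $H^1(S,\R)$ along the lifted Teichm\"uller orbit; fix a basis $c^1,\dots,c^k$ of it. Each $c^j$ has forward Lyapunov exponent $1$, so integrating the variational formula gives $\tfrac1T\int_0^T\re B_{\omega_s}(u^j_s,u^j_s)\,ds\to-1$; since $\re B\ge-1$ pointwise, for every $\rho>0$ the set of times $s$ with $\re B_{\omega_s}(u^j_s,u^j_s)<-1+\rho$ has density $1$. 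Next choose a compact $K\subset\H(\kappa)$ of positive $\mu$-measure on which, moreover, $\pi_\omega^{\pm1}$ are uniformly bounded and the Oseledets subspaces of exponents $1$ and $-1$ remain uniformly transverse; by Birkhoff's ergodic theorem the orbit spends a positive density of time in $K$. Taking $\rho=\varepsilon_K(\delta)$ and intersecting the relevant density-$1$ sets with the return times to $K$, I obtain a positive-density set of times $s$ at which $\omega_s\in K$ and $|B_{\omega_s}(u^j_s,u^j_s)|>1-\varepsilon_K(\delta)$ for every $j$, hence each $u^j_s$ is $\delta$-close to $\C\omega_s$. As the $\pi_{\omega_s}(c^j)$ form a basis of $\pi_{\omega_s}(E)$ whose conditioning is controlled uniformly over $\omega_s\in K$, this forces $\pi_{\omega_s}(E)$ to lie within $C_K\delta$ of $\C\omega_s$, and so $E$ to lie within $C_K\delta$ of the tautological plane $P:=\operatorname{Span}_\R\{[\re\omega],[\im\omega]\}=\pi_{\omega_s}^{-1}(\C\omega_s)$, itself constant along the orbit. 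Since $E$ and $P$ are fixed and $\delta>0$ arbitrary, $E\subseteq P$; as $\dim E\ge2=\dim P$ we get $E=P$, which is absurd since $[\re\omega]\in P$ lies in the Oseledets subspace of exponent $-1\ne1$. Hence $\lambda^\mu_2<1$.

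The step I expect to be the main obstacle --- and the reason Veech's earlier result needed an integrability hypothesis --- is the lack of a \emph{uniform} strict bound for $B_\omega$ on the non-compact stratum: $\sup\{|B_\omega(u,u)|:\|u\|_\omega=1,\ u\perp_{\mathrm{Hodge}}\omega\}$ can tend to $1$ as $\omega$ degenerates, so the pointwise rigidity alone does not bound the exponent. The mechanism that bypasses this is the interplay of Poincar\'e recurrence --- which pins a positive density of the orbit to a fixed compact set where the strict inequality is uniform --- with the Ces\`aro convergence $\re B\to-1$, which yields a density-$1$ set of good times for \emph{every} prescribed gap, in particular for the gap $\varepsilon_K(\delta)$ attached to that compact set; these overlap on a positive-density set of times, which is all the argument requires. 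Some care is also needed in choosing $K$, since Oseledets subspaces are only measurable, so that they stay uniformly transverse there.
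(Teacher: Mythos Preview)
Your overall strategy---the first variational formula for the Hodge norm, the Cauchy--Schwarz rigidity $|B_\omega(u,u)|=1\iff u\in\C\omega$, and Poincar\'e recurrence to a compact set---is exactly the mechanism the paper invokes from \cite{Ftwo}, \S 2. The difficulty is in the execution of the contradiction step.

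The gap is the ``conditioning'' claim. You fix a basis $c^1,\dots,c^k$ of the Oseledets space $E\subset H^1(S,\R)$ and assert that its condition number in the Hodge metric is uniformly bounded over return times $s$ with $\omega_s\in K$. But although $\omega_s$ lies in a compact set of \emph{moduli} space, the Hodge metric on the \emph{fixed} vector space $H^1(S,\R)$ (via the marking) ranges over the full $\Gamma_g$--orbit of a compact family of metrics, which is not compact. The Gram matrix of a fixed basis can and does degenerate along such return times; equivalently, $\sin\theta_s=\|c^1\wedge c^2\|_s/(\|c^1\|_s\|c^2\|_s)$ is only $e^{o(s)}$ by Oseledets and need not be bounded below. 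For the same reason, the final step ``$E$ lies within $C_K\delta$ of $P$ in $\|\cdot\|_{\omega_s}$, and $E,P$ are fixed, hence $E\subseteq P$'' is not valid: the angle between a fixed vector and a fixed subspace, measured in a varying (non--uniformly equivalent) family of inner products, can tend to $0$ without the vector lying in the subspace.

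The clean argument, which is what \cite{Ftwo} actually does and what the paper means by ``a rather straightforward application of the variational formulas'', avoids tracking individual classes. The tautological plane $P=\operatorname{Span}_\R\{[\re\omega],[\im\omega]\}$ is a smooth rank--$2$ sub-bundle of the Hodge bundle over $\H_g$, invariant under the cocycle, and its symplectic orthogonal $P^\perp$ coincides with its Hodge orthogonal (since $\ast P=P$). For $c\in P^\perp$ the normalized holomorphic representative $u_t$ is Hodge--orthogonal to $\omega_t$ at \emph{every} time, so
\[
\Bigl|\tfrac{d}{dt}\log\|c\|_{\omega_t}\Bigr|=\bigl|\re B_{\omega_t}(u_t,u_t)\bigr|\le \Lambda_2(\omega_t)^{1/2},
\]
where $\Lambda_2$ is the second eigenvalue of $H_\omega$, a continuous function on $\H_g$ with $\Lambda_2<1$ everywhere (exactly your rigidity). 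Integrating and applying the ergodic theorem for the Teichm\"uller flow gives that the top exponent on $P^\perp$ is at most $\int\Lambda_2^{1/2}\,d\mu<1$; since the spectrum on $P$ is $\{+1,-1\}$, this yields $\lambda^\mu_2<1$. The point is that $\Lambda_2$ is defined on moduli space, so the uniform strict bound on compacta transfers directly to the time average---no control of a marking--dependent basis is needed.
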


The above theorem is a rather straightforward application of the variational formulas for the
Hodge norm on the Hodge bundle (see \cite{Ftwo}, \S 2).

The non-vanishing of the Kontsevich--Zorich exponents is much harder to establish.
For the canonical absolutely continuous $\SL$-invariant measures on strata of the moduli space, 
M.~Kontsevich and A.~Zorich conjectured that the spectrum of the Rauzy-Veech-Zorich cocycle 
(or, equivalently, of the Kontsevich--Zorich cocycle) is \emph{simple}, hence in particular 
the cocycle is \emph{non-uniformly hyperbolic} (that is, all the Lyapunov exponents are 
non-zero). We recall that the non-uniform hyperbolicity was proved by the 
author in \cite{Ftwo} and  the full conjecture was later proved by A.~Avila and M.~Viana 
\cite{Avila:Viana} by a different approach. The non-uniformly hyperbolicity of the Kontsevich--Zorich
cocycle fails in general even for $\SL$-invariant probability measures. The first example of an $\SL$-invariant ergodic probability measure with zero Lyapunov exponents was found by the author in 2002
(later published in  \cite{ForniSurvey}, \S 7). It is the unique $\SL$-invariant probability measure supported on the closed $\SL$-orbit of an `exceptionally symmetric' translation surface of genus $3$, often called the \emph{Eierlegende Wollmilchsau}  surface \cite{Herrlich:Schmithuesen}.  For such a measure the Kontsevich--Zorich spectrum is maximally degenerate (all non-trivial exponents are zero) \cite{ForniSurvey}, \cite{Forni:Matheus:Zorich:one}. Another maximally degenerate example of genus $4$ was later discovered by C.~Matheus and the author \cite{Forni:Matheus}. Both these examples belong 
to the class of  \emph{square-tiled cyclic covers} which includes many examples of partially degenerate
 Lyapunov spectrum  \cite{Forni:Matheus:Zorich:one}, \cite{Eskin:Kontsevich:Zorich:cyclic}.
 Outside of this class, it seems that there are no explicit examples in the literature of $\SL$-invariant probability measures with some zero exponents. However, the latter should not be hard to construct.
 In fact, as pointed out by C.~Matheus, examples should be contained implicitly in the work  
 of C.~McMullen \cite{McMullen3}, although Kontsevich--Zorich exponents are never mentioned there.
 
 For the maximally degenerate examples, the action on homology of the affine group is given by finite symmetry groups \cite{Matheus:Yoccoz}. According to a theorem of M\"oller \cite{Moeller2}, there are 
no other maximally degenerate examples coming from Veech surfaces (except possibly in genus $5$). For square-tiled cyclic cover, this result can be derived from the Kontsevich--Zorich formula (see
\cite{Forni:Matheus:Zorich:one}).  Recently A.~Avila and M.~M\"oller have announced work that confirms the conjectured negative answer to the question on the existence of $\SL$-invariant measures with completely degenerate Kontsevich--Zorich spectrum  outside of the class of Veech surfaces.

\medskip
Our criterion for the non-uniform hyperbolicity of the Kontsevich--Zorich cocycle 
applies to certain $\SL$-invariant probability ergodic measures which have a local 
product structure in the sense defined below.
 
For every open subset $\Cal U \subset \H(\kappa)$, the local invariant foliations $\Cal W^\pm_{\Cal U}$
are defined as follows:  the leaf $\Cal W^\pm_{\Cal U}(\omega)$ is the unique connected component of the intersection $\Cal W^\pm_\kappa (\omega) \cap \Cal U$ which contains the abelian
differential $\omega\in \Cal U$.  

 \begin{definition} 
 \label{def:prodset}
 An open set $\Cal U \subset \H(\kappa)$ is said to be of \emph{product type}
 if, for any pair of abelian differentials $(\omega^+, \omega^-) \in \Cal U \times \Cal U$, there exist an abelian differential $\omega \in \Cal U$ and an open interval $(a,b)\subset \R$ such that
 $$
 \Cal W^+_{\Cal U}(\omega^+) \cap  \Cal W^-_{\Cal U}(\omega^-) =\{ g_t \omega \vert t\in (a,b)\}\,.
 $$
 \end{definition}
 Since every stratum $\H(\kappa)$ of the moduli space of abelian differentials has an affine structure with local charts given by the relative period map, by writing the invariant foliations $\F^\pm_\kappa$ in coordinates, it can be verified that the topology $\H(\kappa)$ has a (countable) basis of open sets of product type.
 
 Let $\Cal U\subset \H(\kappa)$ be an open set of product type. For every set $\Omega 
 \subset  \Cal U$, let
$$
\Cal W^\pm_{\Cal U}(\Omega) := \bigcup_{\omega\in \Omega} \Cal W^\pm_{\Cal U}(\omega)\,.
$$
 \begin{definition} 
 \label{def:prodstruct}
A Teichm\"uller-invariant measure $\mu$ supported on $\H(\kappa)$ is said to have a \emph{product structure }on an open subset $\Cal U \subset \H(\kappa)$ of product type if, for any pair of Borel subsets $\Omega^+$, $\Omega^- \subset \Cal U$, 
 $$
 \mu(\Omega^+)\not =0 \text{ and }  \mu(\Omega^-)\not =0 \Rightarrow   
 \mu\left( \Cal W^+_{\Cal U}(\Omega^+) \cap \Cal W^-_{\Cal U}(\Omega^-)\right) \not = 0\,.
 $$
 A Teichm\"uller-invariant measure $\mu$ on $\H(\kappa)$ is said to have a \emph{local product structure} if every abelian differential $\omega \in \H(\kappa)$ has an open neighborhood 
 $\Cal U_\omega \subset  \H(\kappa)$ of product type on which $\mu$ has a
 product structure.
 \end{definition}
 
 We remark that any $\SL$-invariant \emph{affine }measure has a local product structure.
 A measure on a stratum of the moduli space of abelian differentials is called affine
if it is locally equal (up to normalization) to the the restriction of the Lebesgue measure to a 
complex affine subspace with respect to the natural affine structure induced by the complex 
(relative) period map. It is part of a broader Ratner-type conjecture on the action of $\SL$ on the 
moduli space of abelian differentials that all  $\SL$-invariant probability measures are affine.  
A proof  of a Ratner-type conjecture, which includes the statement that every $\SL$-invariant
probability measure is affine, has been recently announced by A.~Eskin and M.~Mirzakhani 
\cite{Eskin:Mirzakhani}. For the genus two case, all $\SL$-invariant probability measures 
were classified by C.~McMullen \cite{McMullen1} and K.~Calta \cite{Calta} and are known 
to be affine.

\smallskip
Next we recall the notion of a completely periodic \emph{Lagrangian }measured foliation (introduced
in  \cite{Ftwo}, Definition 4.3):
 \begin{definition} 
 \label{def:LagrangianFol}
 A \emph{completely periodic }measured foliation ${\F}$ on a compact orientable surface $S$ of genus $g\geq 2$ is a measured foliation on $S$ such that all its regular leaves are closed (compact) curves.  
 
 The  \emph{homological dimension }of a completely periodic measured foliation $\F$
 on $S$ is the dimension of the (isotropic) subspace ${\L}({\F}) \subset H_1(S,{\R})$, generated by the homology classes of the regular leaves of ${\F}$. 

A \emph{Lagrangian }measured foliation ${\F}$ on $S$ is a completely periodic measured foliation
of maximal homological dimension (equal to the genus of the surface), that is, a measured foliation
such that  the subspace ${\L}({\F})$  is a Lagrangian subspace of the space $H_1(S,\R)$, endowed with the symplectic structure given by the intersection form. 
\end{definition}

A completely periodic measured foliation ${\F}$ is Lagrangian if and only if it has $g\geq 2$ distinct regular leaves $\gamma_1,\dots,\gamma_g$ such that ${\widehat S}:=S\setminus\cup \{\gamma_1,\dots,\gamma_g\}$ is homeomorphic to a sphere minus $2g$ (paired) disjoint disks.

\begin{definition} 
\label{def:cuspLagragian}
A Teichm\"uller-invariant probability measure on a stratum $\H(\kappa)$  is called \emph{cuspidal }if it has a local product structure and its support contains a holomorphic differential with completely periodic horizontal or vertical foliation. 

The \emph{homological dimension }of a Teichm\"uller-invariant 
measure  is the maximal homological dimension of  a completely periodic vertical or horizontal 
foliation of a holomorphic differential in its support.

A Teichm\"uller-invariant probability measure is said to be \emph{Lagrangian} if it has maximal homological dimension, that is, if its support contains a holomorphic differential with Lagrangian horizontal or vertical foliation.
\end{definition}

We are very grateful to J.~Smillie who pointed out to us that any closed $\SL$-invariant set, hence in
particular the support of any $\SL$-invariant measure, contains a holomorphic differential with completely periodic horizontal or vertical foliation. In fact, according to a theorem by Smillie and B.~Weiss \cite{Smillie:Weiss}, any closed $\SL$-invariant subset of the moduli space contains a 
minimal set for the Teichm\"uller horocycle flow and every such minimal set corresponds to a cylinder decomposition.  By this result and by the Ratner's type result announced by A.~Eskin and 
M.~Mirzakhani \cite{Eskin:Mirzakhani} that every $\SL$-invariant probability measure is affine, 
it follows that every $\SL$-invariant probability measure is cuspidal.

\smallskip
Our main result is the following criterion:

\begin{theorem} 
\label{thm:main}
Let $\mu$ be a $\SL$-invariant ergodic probability measure on a
stratum $\H(\kappa)$ of the moduli space of abelian differential.
If $\mu$ is cuspidal Lagrangian, the Kontsevich--Zorich cocycle is non-uniformly
hyperbolic $\mu$-almost everywhere. In fact, the Lyapunov exponents $\lambda^\mu_1 
\geq \dots \geq \lambda^\mu_{2g}$ of the Kontsevich--Zorich cocycle form a symmetric subset 
of the real line and the following inequalities hold:
\begin{equation}
\label{eq:NUH}
\begin{aligned}
 1=\lambda^\mu_1>\lambda^\mu_2\geq& \dots \geq\lambda^\mu_g>0>\lambda^\mu_{g+1}
 =-\lambda^\mu_g\geq
\dots \\
              &\dots \geq \lambda^\mu_{2g-1}=-\lambda^\mu_2 > \lambda^\mu_{2g}=-\lambda^\mu_1=-1 \,\,.
 \end{aligned}
 \end{equation}
\end{theorem}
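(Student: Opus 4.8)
The plan is to reduce the theorem to the main analytic estimate of \cite{Ftwo}, which controls the second Lyapunov exponent of the Kontsevich--Zorich cocycle in terms of the behaviour of the Hodge norm along the Teichm\"uller orbit, and to extract the necessary non-degeneracy from the cuspidal Lagrangian hypothesis via the local product structure. By Theorem \ref{thm:Tflowhyp} the spectrum already has the gap $\lambda^\mu_1=1>\lambda^\mu_2$, and by the symplectic symmetry recalled in \eqref{eq:KZspectrum} it suffices to prove that $\lambda^\mu_g>0$, i.e.\ that there are no vanishing exponents at all. The key quantity is the second fundamental form of the Hodge norm (equivalently, the function $\Phi$ measuring how far the Hodge norm of a closed $1$-form is from being Teichm\"uller-invariant), whose positivity on average forces a strict drop of the exponents; this is the mechanism already used in \cite{Ftwo} for the canonical measures.

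First I would fix a holomorphic differential $\omega_0$ in the support of $\mu$ whose vertical (say) foliation $\F^v_{\omega_0}$ is Lagrangian, so that there are $g$ disjoint regular closed leaves $\gamma_1,\dots,\gamma_g$ whose classes span a Lagrangian subspace $\L(\F^v_{\omega_0})\subset H_1(S,\R)$ and with $S\setminus\cup\gamma_i$ a sphere with $2g$ paired disks removed. Along the backward Teichm\"uller orbit $g_{-t}\omega_0$ the vertical cylinders around the $\gamma_i$ are stretched and the transverse Hodge norm of the dual cohomology classes degenerates in a controlled way; the Lagrangian condition is exactly what is needed so that the degeneration occurs in a subspace of the full dimension $g$, ruling out the existence of a nontrivial subbundle on which the Hodge norm stays bounded in both time directions. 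Concretely, I would show that the Forni form $\Phi_\omega$ (the boundary term in the variational formula, supported on the zeroes and detecting the non-holomorphy of $g_t$-pushed forms) is strictly positive definite on a subspace of cohomology of dimension $g$ at the differentials obtained by applying the Teichm\"uller flow to a neighbourhood of $\omega_0$.

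Next I would use the local product structure of $\mu$ together with ergodicity: the cuspidal hypothesis guarantees $\omega_0$ lies in the support, and the product structure lets me propagate the positivity estimate from a product neighbourhood $\Cal U_{\omega_0}$ to a positive-measure set of trajectories that spend a definite proportion of time near configurations where $\Phi$ is non-degenerate of rank $g$. Feeding this into Oseledets' theorem and the integral formula for the sum $\lambda^\mu_1+\dots+\lambda^\mu_g=\int_{\H(\kappa)} (\text{trace of }\Phi)\,d\mu$ — or rather into the refined version that bounds each partial sum $\lambda^\mu_1+\dots+\lambda^\mu_k$ — yields $\lambda^\mu_g>0$: if some $\lambda^\mu_j$ vanished, the corresponding Oseledets subspace would be a bounded-norm invariant subbundle, contradicting the strict drop of the Hodge norm in a $g$-dimensional direction over a positive-measure set. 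Combining with $\lambda^\mu_2<1$ and symplectic symmetry gives the full chain of inequalities \eqref{eq:NUH}.

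The main obstacle is the step linking the \emph{geometric} Lagrangian condition at a single differential $\omega_0$ to a \emph{measure-theoretic} positivity statement usable in Oseledets' theorem: one must show that the rank-$g$ non-degeneracy of the Forni form is not destroyed when averaging and that it genuinely obstructs \emph{every} vanishing exponent rather than just the second one. This is where the local product structure is essential — it allows one to intersect stable and unstable sets of positive measure near $\omega_0$ and thereby to build a positive-measure set of bi-infinite trajectories realizing the degeneration — and it is the technical heart of the argument, generalizing the compactness-plus-recurrence reasoning of \cite{Ftwo} to measures that need not be absolutely continuous. A secondary difficulty is making uniform, on the product neighbourhood, the lower bound on the Hodge-norm contraction coming from the stretched Lagrangian cylinders, since the space $\H(\kappa)$ is non-compact and the relevant constants a priori deteriorate near its ends.
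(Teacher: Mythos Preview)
Your broad strategy is correct and matches the paper: reduce to $\lambda^\mu_g>0$, push the Teichm\"uller orbit of a Lagrangian differential toward a boundary point so that the second fundamental form $B^\R_\omega$ becomes non-degenerate of full rank, and derive a contradiction from the partial-sum formula (your ``refined version'' is exactly Theorem~\ref{thm:partialsum}, whose consequence is Lemma~\ref{lemma:vanishing}: if $\lambda^\mu_{k+1}=\dots=\lambda^\mu_g=0$ then $B^\R_\omega$ vanishes on every Hodge-orthonormal isotropic completion of $E^+_k(\omega)$).

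There is, however, a genuine missing ingredient. The asymptotic estimate you invoke (Lemma~\ref{lemma:Bconverge} in the paper) says $B^\R_{\omega}\approx -I$ \emph{only on Lagrangian subspaces $\Lambda$ transverse to $P(\mathcal A)$}, where $\mathcal A=\mathcal L(\F^v_{\omega_0})$ is the integral Lagrangian spanned by the waist curves. On the other side, Lemma~\ref{lemma:vanishing} forces $B^\R_\omega$ to vanish on the isotropic complement of $E^+_k(\omega)$ \emph{inside a Lagrangian containing $E^+_k(\omega)$}. To collide these two statements you must know that $E^+(\omega)$ sits inside some $\Lambda\in G_{\mathcal A}(S,\R)$, i.e.\ that $E^+(\omega)\cap P(\mathcal A)=\{0\}$. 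Your sketch does not address this, and without it the ``strict drop in a $g$-dimensional direction'' need not touch the zero-exponent subspace at all. This transversality is the content of Lemma~\ref{lemma:transversality}: for any $\SL$-invariant ergodic $\mu$, the unstable bundle $E^+$ is $\mu$-a.e.\ transverse to every \emph{integral} Lagrangian subspace. Its proof is not soft---it combines the Kontsevich--Zorich formula with the Nevo--Stein pointwise ergodic theorem for $\SL$-ball averages, and it is singled out in the paper as the ``crucial improvement'' over \cite{Ftwo}.

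Relatedly, your account of the local product structure is slightly off-target. It is not used to ``propagate positivity to a positive-measure set of trajectories''; the positivity of $B^\R_\omega$ near the boundary is open and needs no help. Rather, the differential $\omega_0$ with Lagrangian vertical foliation is a very special (cusp) point and is typically \emph{not} Oseledets regular, so $E^+(\omega_0)$ need not even exist. The product structure is what lets you slide along the unstable foliation $\mathcal W^+_{\mathcal U}$ from a density point $\omega^+$ of a Luzin set (where $E^+$ is defined, continuous, and transverse to $P(\mathcal A)$) over to the leaf $\mathcal W^-_{\mathcal U}(\omega^-)$ of a differential $\omega^-$ with Lagrangian vertical foliation, producing an $\omega_0$ that is simultaneously a density point of a positive-measure set on which $E^+$ is continuous and transverse to $P(\mathcal A)$ \emph{and} has Lagrangian vertical foliation (Lemma~\ref{lemma:specialpoints}). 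Only then can you flow forward, apply the asymptotic estimate on a Lagrangian $\Lambda_\omega\supset E^+(\omega)$, and contradict Lemma~\ref{lemma:vanishing}.
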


A weaker statement can be proved without assuming that the $\SL$-invariant probability measure has
a local product structure. In fact, the following holds:

\begin{theorem} 
\label{thm:partial}
Let $\mu$ be a $\SL$-invariant ergodic probability measure on a stratum $\H(\kappa)
\subset \H_g$ of the moduli space of abelian differential on Riemann surfaces of genus
$g\geq 3$. If $\mu$ is Lagrangian, the following inequalities hold for the Lyapunov spectrum of 
the Kontsevich--Zorich cocycle:
\begin{equation}
\label{eq:partial}
1=\lambda^\mu_1> \lambda^\mu_2 \geq \dots \geq \lambda^\mu_{[g+1/2]}>  0 \,.
\end{equation}
\end{theorem}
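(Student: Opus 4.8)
The plan is to reduce the statement to the non-uniform hyperbolicity result of Theorem \ref{thm:main} by a partial, non-product-structure version of the same mechanism. First I would recall the central analytic object from \cite{Ftwo}: the variational formulas for the Hodge norm along the Teichm\"uller flow, which express the second fundamental form of the Hodge bundle in terms of the functional $\Lambda(\omega)$ built from the Cauchy--Riemann operator $\partial_\omega$ on the surface $S_\omega$, and more generally a family of non-negative curvature-type quantities $\Lambda_k(\omega)$ governing the action of the cocycle on isotropic subspaces of dimension $k$. The key inequality, valid with respect to any Teichm\"uller-invariant ergodic probability measure $\mu$, reads
\begin{equation}
\label{eq:key}
\lambda^\mu_1+\cdots+\lambda^\mu_k \;=\; \int_{\H(\kappa)} \Lambda_k(\omega)\, d\mu(\omega)\,,
\end{equation}
so that the partial sums of the top exponents are controlled by integrals of the $\Lambda_k$, and $\Lambda_k(\omega)=k$ for some $\omega$ in the support if and only if the horizontal foliation of $\omega$ carries an isotropic subspace of dimension $k$ spanned by closed leaves on which the relevant holomorphic forms degenerate in a maximal way.

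Next I would exploit the Lagrangian hypothesis. By Definition \ref{def:cuspLagragian}, the support of $\mu$ contains a holomorphic differential $\omega_0$ whose horizontal (or vertical) foliation $\F^h_{\omega_0}$ is Lagrangian, i.e. completely periodic of homological dimension $g$, so the space $\L(\F^h_{\omega_0})$ is a Lagrangian subspace of $H_1(S,\R)$. Passing to the complementary subsurface $\widehat S=S\setminus\cup\{\gamma_1,\dots,\gamma_g\}$, a sphere with $2g$ paired disks removed, one gets a precise geometric picture of how the period coordinates degenerate as one moves toward the cusp of $\H(\kappa)$ along the stable foliation $\Cal W^-_\kappa$ through $\omega_0$. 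The point is that a suitable sequence $g_{t_n}\omega_0$ (or rather differentials approaching the cylinder-decomposition locus) makes $\Lambda_k(\omega)$ attain, in the limit, the maximal value $k$ for every $k\le g$; more usefully, by semicontinuity and the fact that $\omega_0$ lies in $\mathrm{supp}(\mu)$, there is an open set of positive $\mu$-measure on which $\Lambda_{[g+1/2]}$ is bounded below away from $[g+1/2]-$(something) only on a null set — here is where, lacking the product structure, I can only control \emph{half} the exponents. Concretely, I would show that $\Lambda_1(\omega)<1$ on a set of positive measure, hence $\lambda^\mu_2<1$ by \eqref{eq:key} with $k=2$, recovering the spectral gap of Theorem \ref{thm:Tflowhyp}; and then iterate: the Lagrangian degeneration forces, for each $k\le [g+1/2]$, that $\Lambda_k$ is strictly less than $k$ on a positive-measure set unless $\lambda^\mu_k$ coincides with the preceding exponent collapses all the way down, and a counting/parity argument at the middle index $[g+1/2]$ shows the chain of inequalities $1=\lambda^\mu_1>\lambda^\mu_2\ge\cdots\ge\lambda^\mu_{[g+1/2]}>0$ cannot degenerate. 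Ergodicity is used to propagate the positive-measure strict inequality to an almost-everywhere statement about the Oseledets splitting.

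The main obstacle, and the reason the conclusion is only \eqref{eq:partial} rather than full non-uniform hyperbolicity, is the absence of a product structure: without it one cannot intersect the stable and unstable conditional measures to conclude that the \emph{same} good set is seen by both the forward and backward cocycle, so one only gets a one-sided estimate that caps the number of exponents one can push off zero at $[g+1/2]$, which is roughly half of $g$. Technically, the hard part is the lower semicontinuity and boundary analysis of $\Lambda_k$ near the cylinder-decomposition locus in the support of $\mu$: one must check that the Lagrangian foliation of $\omega_0$, together with the combinatorics of $\widehat S$, genuinely produces the strict defect $\Lambda_{[g+1/2]}<[g+1/2]$ on a set of positive measure and not merely on a measure-zero slice, and that this defect is quantitatively stable under the Teichm\"uller flow long enough for the Birkhoff/Oseledets averaging in \eqref{eq:key} to register it. Once that estimate is in place, the inequalities \eqref{eq:partial} follow by the same bookkeeping with the symplectic symmetry of the spectrum \eqref{eq:KZspectrum} as in \cite{Ftwo}.
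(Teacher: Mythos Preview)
Your proposal has a genuine gap: the mechanism that actually produces the bound $\lambda^\mu_{[(g+1)/2]}>0$ is missing, and the explanation you offer for why only half the exponents are controlled is incorrect.

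First, the formula you label \eqref{eq:key} is not quite what the paper proves. Theorem~\ref{thm:partialsum} gives $\lambda^\mu_1+\dots+\lambda^\mu_k=\int\Phi_k(\omega,E^+_k(\omega))\,d\mu$, where $\Phi_k$ depends on the isotropic subspace $E^+_k(\omega)$, not merely on $\omega$; and the functions $\Lambda_i(\omega)$ are eigenvalues of $H^\R_\omega$ bounded by $1$, so a claim like ``$\Lambda_k(\omega)=k$'' makes no sense in the paper's notation. More importantly, the argument does not proceed by showing $\Lambda_k$ is strictly below some threshold and iterating.

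The actual proof is a contrapositive based on Lemma~\ref{lemma:vanishing}: if $\lambda^\mu_{k+1}=\dots=\lambda^\mu_g=0$, then for $\mu$-a.e.\ $\omega$ and any Hodge-orthonormal Lagrangian completion $\{c_1,\dots,c_g\}$ of $E^+_k(\omega)$ one has $B^\R_\omega(c_i,c_j)=0$ for all $i,j\in\{k+1,\dots,g\}$. The Lagrangian hypothesis is used, via Lemma~\ref{lemma:Bconverge}, to produce an open set of positive $\mu$-measure on which the bilinear form $B^\R_\omega$ has \emph{maximal rank}. The bound $k\geq g/2$ then comes from an elementary linear algebra fact: a nondegenerate bilinear form on a $g$-dimensional space cannot vanish identically on a $(g-k)$-dimensional subspace unless $g-k\leq g/2$. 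This is the entire source of the index $[(g+1)/2]$.

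Your attribution of the halving to ``one-sided estimates'' arising from the absence of a local product structure is therefore off the mark. The product structure in Theorem~\ref{thm:main} is needed for something different: to arrange (via Lemma~\ref{lemma:specialpoints}) that $E^+(\omega)$ itself sits inside a Lagrangian transverse to the pinching curves, so that one can test $B^\R_\omega$ directly on a basis extending $E^+_k(\omega)$ and get a contradiction for \emph{every} $k<g$. Without it, one only knows $B^\R_\omega$ is nondegenerate somewhere, and the rank argument above is all one can extract.
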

As recalled above, in genus $2$ the Kontsevich--Zorich spectrum is simple for all $\SL$-invariant
ergodic probability measures \cite{Bainbridge}. This result can be derived from the
Kontsevich--Zorich formula (see Corollary~\ref{cor:KZformula} below) and from a description of
possible boundary points of Teichm\"uller disks in the Deligne-Mumford compactification of the
moduli space.

For cuspidal measures, Theorem~\ref{thm:main} can be generalized as follows:

\begin{theorem} 
\label{thm:general}
Let $\mu$ be a $\SL$-invariant ergodic probability cuspidal measure on a
stratum $\H(\kappa)$ of the moduli space of abelian differential.
If $\mu$ has homological dimension $k\in \{1, \dots, g\}$, there are at least
$k$ strictly positive Kontsevich--Zorich exponents, that is,
$$
1=\lambda^\mu_1>\lambda^\mu_2\geq \dots \geq\lambda^\mu_k>0\,.
$$
\end{theorem}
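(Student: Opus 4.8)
The plan is to reduce Theorem~\ref{thm:general} to the same analytic mechanism that drives Theorem~\ref{thm:main}, namely the variational formula for the Hodge norm along the Teichm\"uller flow, combined with the existence, guaranteed by the cuspidality hypothesis, of a holomorphic differential $\omega_0$ in the support of $\mu$ whose horizontal (or vertical) foliation $\F$ is completely periodic of homological dimension exactly $k$. First I would recall from \cite{Ftwo} that the second fundamental form of the Hodge bundle along the Teichm\"uller flow is controlled by the quantity $\Lambda_\omega$ (the Hilbert--Schmidt norm of a certain complex-bilinear form on the space of holomorphic differentials), and that the exterior powers of the Kontsevich--Zorich cocycle satisfy analogous variational inequalities: the growth rate of the Hodge norm on $\bigwedge^j H^{1,0}$ is governed by the sum of the $j$ largest eigenvalues of the associated form, so that $\lambda^\mu_1 + \dots + \lambda^\mu_j \geq \int (\text{that eigenvalue sum})\, d\mu$. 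The strict positivity of $\lambda^\mu_k$ will follow once I show that this integral, for $j=k$, is strictly larger than $\lambda^\mu_1 + \dots + \lambda^\mu_{k-1}$, i.e.\ that the $k$-th largest eigenvalue contributes positively on a set of positive measure.

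The heart of the argument is then the geometric input at $\omega_0$. A completely periodic foliation $\F$ of homological dimension $k$ yields $k$ homologically independent closed leaves $\gamma_1, \dots, \gamma_k$ spanning the isotropic subspace $\L(\F) \subset H_1(S,\R)$; dually there are $k$ cohomology classes on which the degeneration of $\omega_0$ (pinching the cylinders of $\F$ under $g_t$) concentrates. The key computation, which mirrors the Lagrangian case but now only in codimension forcing the rank-$k$ estimate, is that as one approaches the boundary of the moduli space along the stable/unstable leaf through $\omega_0$, the relevant quadratic form degenerates in a controlled way so that its top $k$ eigenvalues stay bounded away from zero — equivalently, the restriction of the Hodge form to the $k$-dimensional subspace of $H^{1,0}$ "Poincar\'e-dual" to the vanishing cycles $\gamma_1, \dots, \gamma_k$ is close to an isometry. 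Here the cuspidality (local product structure) is used exactly as in Theorem~\ref{thm:main}: it lets me transfer the estimate valid near $\omega_0$ to a positive-measure set, by intersecting a stable neighborhood where the horizontal estimate holds with an unstable neighborhood where the symmetric vertical estimate holds, and invoking the product-structure property of $\mu$ to conclude the intersection has positive measure.

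Concretely, the steps in order would be: (1)~recall the variational formulas for the Hodge norm on $\bigwedge^j H^{1,0}$ from \cite{Ftwo}, yielding the inequality for partial sums of exponents in terms of an integral of eigenvalue sums of the curvature form; (2)~using Smillie--Weiss / the cuspidality hypothesis, fix $\omega_0$ in the support with completely periodic horizontal foliation $\F$ of homological dimension $k$, and analyze the behaviour of the Hodge form under the degeneration $g_{-t}\omega_0 \to$ boundary, showing the top $k$ eigenvalues of the relevant form are bounded below by a positive constant near $\omega_0$; (3)~by continuity of the Hodge form and the invariant subspaces, upgrade this to a uniform lower bound on an open neighborhood $\Cal U^+$ (in the stable leaf direction), and symmetrically get $\Cal U^-$ from the vertical picture; (4)~use the local product structure of $\mu$ to show $\mu(\Cal U^+ \cap \Cal U^- \text{-saturation}) > 0$, hence the eigenvalue-sum integrand exceeds its "no contribution from the $k$-th eigenvalue" value on a positive-measure set, giving $\lambda^\mu_1 + \dots + \lambda^\mu_k > \lambda^\mu_1 + \dots + \lambda^\mu_{k-1}$; (5)~combine with the already-known chain $1 = \lambda^\mu_1 > \lambda^\mu_2 \geq \dots$ (the spectral gap of Theorem~\ref{thm:Tflowhyp} handles $j=1$, and iterating handles intermediate $j$) to conclude $\lambda^\mu_k > 0$.

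I expect the main obstacle to be step~(2): showing that the top $k$ eigenvalues of the curvature/second-fundamental-form operator genuinely stay bounded away from zero along the degeneration, rather than just that the operator is nonzero. This requires identifying the precise $k$-dimensional subspace of $H^{1,0}$ that "survives" the pinching — it is the span of the holomorphic differentials dual to the vanishing homology classes — and estimating the Hodge form on it uniformly; the subtlety is that near the cusp the Hodge norm itself degenerates, so one must renormalize carefully and control cross-terms between the surviving subspace and its complement. This is the step where the analysis genuinely uses the homological dimension $k$ (as opposed to merely $k=g$), and it is a localized version of the computation already carried out for the Lagrangian case, but the non-maximal rank means one cannot simply invoke a full-rank nondegeneracy and must instead track the splitting of $H^{1,0}$ induced by the cylinder decomposition.
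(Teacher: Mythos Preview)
Your overall architecture---asymptotics of the Hodge form near a pinching degeneration determined by the $k$ independent cycles, plus the local product structure to produce a set of positive $\mu$-measure---is the right one, and matches the paper's intended route (which, as stated there, runs ``along the same lines'' as Theorem~\ref{thm:main}). But two of your steps, as written, do not go through.

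First, step~(1) is miscast. There is no inequality of the form $\lambda^\mu_1+\dots+\lambda^\mu_j \geq \int(\text{sum of $j$ largest eigenvalues of }H^\R_\omega)\,d\mu$ available here; the variational formula (Theorem~\ref{thm:partialsum}) is an \emph{equality} $\lambda^\mu_1+\dots+\lambda^\mu_j = \int \Phi_j(\omega,E^+_j(\omega))\,d\mu$, and $\Phi_j$ depends on the unstable subspace $E^+_j$, not just on the eigenvalues $\Lambda_i$. The paper's mechanism is therefore by contradiction (Lemma~\ref{lemma:vanishing}): if there were only $m<k$ positive exponents, then $\int \Phi_m(\omega,E^+_m)\,d\mu = \int \Phi_g\,d\mu$, and since $\Phi_m \le \Phi_g$ pointwise one gets $\Phi_m(\omega,E^+_m)=\Phi_g(\omega)$ for $\mu$-a.e.\ $\omega$, which forces $B^\R_\omega(c_i,c_j)=0$ for \emph{every} Hodge-orthonormal isotropic completion $\{c_{m+1},\dots,c_g\}$ of $E^+_m$. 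The contradiction then comes from exhibiting, on a positive-measure set, a Lagrangian $\Lambda \supset E^+_m(\omega)$ and a vector $c$ in $\Lambda$ orthogonal to $E^+_m$ with $|B^\R_\omega(c,c)|\ge 1/2$. Your direct ``$k$-th eigenvalue contributes positively'' route does not connect to the exponent because the $\Phi_j$ are tied to $E^+_j$, not to eigenspaces of $H^\R_\omega$.

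Second, you are missing the transversality input (Lemma~\ref{lemma:transversality} and its consequence Lemma~\ref{lemma:specialpoints}), and this is where the product structure actually enters. It is not used, as you suggest, to intersect a ``horizontal'' estimate with a symmetric ``vertical'' one. Rather: one first shows (Lemma~\ref{lemma:transversality}, via the Kontsevich--Zorich formula and Nevo--Stein) that $E^+(\omega)$ is $\mu$-a.e.\ transverse to every integral Lagrangian, hence in particular to any integral Lagrangian containing the Poincar\'e dual of the $k$-dimensional isotropic space $\mathcal{L}(\F)$. By Luzin one extracts a compact positive-measure set $\mathcal R^+$ on which $E^+$ is continuous and transverse to $P(\mathcal L(\F))$; the product structure is then used exactly once, to slide $\mathcal R^+$ along $\mathcal W^+$-leaves onto a set containing a density point $\omega_0$ whose vertical foliation is the given completely periodic $\F$. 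Only then does one run the Teichm\"uller orbit $g_t\omega_0$ toward the boundary and invoke the (rank-$k$ version of the) asymptotics of Lemma~\ref{lemma:Bconverge}: on any Lagrangian $\Lambda$ with $\Lambda\cap P(\mathcal L(\F))=\{0\}$, a $k\times k$ block of $B^\R_{g_t\omega_0}$ tends to $-I_k$ while the rest stays bounded. Since $E^+_m\subset\Lambda$ has dimension $m<k$, its Hodge-orthogonal complement in $\Lambda$ has dimension $g-m > g-k$ and must meet that $k$-dimensional ``nondegenerate'' block nontrivially---contradicting the vanishing from Lemma~\ref{lemma:vanishing}.

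Your identification of step~(2) as the genuine analytic work is exactly right: the adaptation of Lemma~\ref{lemma:Bconverge} to pinching only $k<g$ homologically independent curves (so that $\im\,\Pi(\tau)$ blows up in a $k$-dimensional block rather than in all directions) is what replaces the Lagrangian hypothesis by the homological-dimension-$k$ hypothesis.
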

The proof of the above theorem can be obtained along the same lines of the proof of
Theorem~\ref{thm:main} and will not be explained in detail in this paper. 

We remark that the lower bound given in Theorem~\ref{thm:general} is optimal in general. 
In fact, the \emph{maximally degenerate examples }of \cite{ForniSurvey}, \cite{Forni:Matheus} (see 
also \cite{Forni:Matheus:Zorich:one}) are both given by $\SL$ invariant probability measures (supported 
on closed $\SL$-orbits) of \emph{homological dimension equal to} $1$ (in both cases completely
periodic directions split the surface into two homologous cylinders). However, there exist  examples 
of cuspidal measures where the number of strictly positive Kontsevich--Zorich exponents is \emph{greater} than the homological dimension of the measure. For instance the family of square-tiled cyclic covers studied by C.~Matheus and J.-C.~Yoccoz (see  \cite{Matheus:Yoccoz}, \S 3.1) provide examples of $\SL$-invariant probability measures on the strata $\H(q-1, q-1, q-1)$ for any odd $q\geq 3$  with homological dimension equal to $1$ and a number of non-zero Kontsevich--Zorich exponents equal
to  $1 + (q-3)/2$ when $q=3 (\text{\rm mod. }4)$, and $1+ (q-1)/2$ when $q=1 (\text{\rm mod. }4)$.
We are grateful to C.~Matheus who computed the above formulas for us. His calculations appear
at the end of this paper in Appendix \ref{s.jc}. 

The case $q=3$ in the Matheus--Yoccoz  family corresponds to the example by C.~Matheus and the author of a square-tiled cyclic cover of genus $4$ with completely degenerate spectrum \cite{Forni:Matheus}, \cite{Forni:Matheus:Zorich:one} mentioned above.

Recently, V. Delecroix and C.~Matheus have found examples of cuspidal non-uniformly hyperbolic 
$\SL$-invariant probability measures which are not Lagrangian: the measures supported on the $\SL$ orbits of a couple of square-tiled surfaces, one in genus $3$ and one in genus $4$. Such examples were found with the aid of A. Zorich's computer program to compute the exponents to have simple
Lyapunov spectrum and, as communicated to us by  Carlos Matheus,  it seems possible to prove simplicity by some version of Avila--Viana's criterion \cite{Avila:Viana}.


As we have remarked above, by a result of J.~Smillie and B.~Weiss  \cite{Smillie:Weiss} on
minimal sets for the Teichm\"uller horocycle flow and by the Ratner type result announced by 
A.~Eskin and M.~Mirzakhani \cite{Eskin:Mirzakhani} it follows that every $\SL$-invariant probability measure is cuspidal. However, not all $\SL$-invariant probability ergodic measures are Lagrangian. In fact, as mentioned above, there are many examples among $\SL$-invariant measures supported on 
$\SL$-orbits of square-tiled cyclic covers  which fail to be non-uniformly hyperbolic \cite{Forni:Matheus:Zorich:one}, \cite{Eskin:Kontsevich:Zorich:cyclic}. By our criterion such measures are 
not Lagrangian.  To the author's best knowledge, no examples of (cuspidal)  $\SL$-invariant probability measures which are not Lagrangian outside of the class of measures supported on closed $\SL$-orbits or on strata of branched covers are known.

All the $\SL$-invariant probability measures supported on $\SL$-orbits of \emph{algebraically primitive} Veech surfaces are cuspidal Lagrangian (Lemma~\ref{lemma:bouillabaisse}). We are very grateful to 
P.~Hubert for explaining to us the proof of this basic fact. Our criterion therefore implies a corollary of formulas by I.~Bouw and M.~M\"oller \cite{Bouw:Moeller} which states that  the Kontsevich--Zorich cocycle is non-uniformly hyperbolic for all algebraically primitive Veech surfaces (Corollary 
\ref{cor:KZexpalgrank}). In fact, in section~\ref{Veechsurfaces} we prove non-uniform hyperbolicity for a wider class of Veech surfaces, the Veech surfaces of \emph{maximal homological rank} (Definition~\ref{def:homrank}). In addition to algebraically primitive Veech surfaces, this class also includes many geometrically primitive examples which are not algebraically primitive, namely all the primitive Prym eigenforms of genus $3$ and $4$ constructed by C.~McMullen in \cite{McMullen2}, as well as many non geometrically primitive Veech surfaces, for instance all the non-primitive Prym eigenforms (Lemma~\ref{lemma:Prymrank}). The Kontsevich--Zorich cocycle is thus non-uniformly hyperbolic with respect to all 
$\SL$-invariant probability measures given by Prym eigenforms (Corollary~\ref{cor:Prym}). We are very grateful to P.~Hubert who suggested to test our criterion on Prym eigenforms, as a main example of 
geometrically primitive, non-algebraically primitive Veech surfaces.

All canonical $\SL$-invariant absolutely continuous invariant probability measures on connected components of strata of abelian differentials are cuspidal Lagrangian (Lemma~\ref{lemma:density}). 
Our criterion therefore implies the non-uniform hyperbolicity of the Kontsevich--Zorich cocycle with respect to all canonical measures on the moduli space of abelian differentials  (Corollary~\ref{cor:canonic}), a result proved in \cite{Ftwo} (see also \cite{Avila:Viana}). The argument given 
here for this case is in fact a simplified and streamlined version of the original argument.

There are several interesting $\SL$-invariant probability measures to which our criteria 
may be applied. One of the most interesting in our opinion is given by the
algebraic (singular) measures on the moduli space of abelian differentials  coming from canonical measures on strata of quadratic differentials by the standard double (orienting) cover construction.
This application has been carried out recently by R.~Trevi\~no who has derived a proof that
all such measures are non-uniformy hyperbolic \cite{Trevino}. 

\smallskip
This paper is organized as follows. In section~\ref{expformulas} we recall the variational
formulas for the exponents proved in \cite{Ftwo}. In section~\ref{Lab} we prove a transversality
result for the unstable space of the Kontsevich--Zorich cocycle with respect to integral
Lagrangian subspaces in homology. This is a crucial improvement over \cite{Ftwo}.
In section~\ref{asymptotics} we prove following \cite{Ftwo} the key asymptotic formulas
near appropriate boundary points of the moduli space. Section~\ref{Nuh} is devoted
to carrying out the proof of the the main theorem on the non-uniform hyperbolicity
of the Kontsevich--Zorich cocycle. Finally, section~\ref{applications} presents two
fundamental applications. The first, in section~\ref{Veechsurfaces}, is to a wide class
of Veech surfaces, which includes algebraically primitive Veech surfaces already
covered by Bouw-M\"oller results, but it also applies to many other cases. The second,
in section~\ref{canonicmeas}, is a review of the case of canonical measures on connected 
components of strata of abelian differential, already treated in \cite{Ftwo}. The argument we 
present here is somewhat simpler and hopefully provides a template for other cases, such as 
measures coming from strata of quadratic differentials.

\section{Formulas for the exponents}
\label{expformulas}

We recall below for the convenience of the reader, the relevant formulas for  all partial sums 
of the Kontsevich--Zorich exponents. Such formulas were first derived in \cite{Ftwo} (see also 
\cite{Krikorian}, \cite{ForniSurvey}, \cite{VeechBrin}) as a generalization of the Kontsevich--Zorich formula for the sum of all the exponents. The exposition below follows  \cite{Forni:Matheus:Zorich:two} .

Let $S$ be a Riemann surface. The natural Hermitian intersection form on the complex
cohomology $H^1(S,\C)$ of the Riemann surface can be defined on closed 1-forms 
representing cohomology classes as
\begin{equation}
\label{eq:Intform}
(\omega_1,\omega_2):=
\frac{i}{2}\int_S\omega_1\wedge\bar\omega_2\ .
\end{equation}
Restricted to the subspace $H^{1,0}(S,\C)$ of holomorphic $1$-forms,
it induces a positive definite Hermitian form. Hence, by the Hodge
representation theorem, it induces a positive definite bilinear form
on the cohomology $H^1(S,\R)$. The real Hodge bundle $H^1_g$ (over the moduli space
of abelian differentials) is thus endowed with an inner product, called the {\it Hodge inner 
product} and a norm, called the {\it Hodge norm}.

Given a cohomology class $c\in H^1(S,\R)$, let $\omega_c$ be the unique
holomorphic 1-form such that $c=[\re(\omega_c)]$. Define $\ast c$ to be
the real cohomology class $[\im(\omega_c)]$. The Hodge norm $\|c\|$ satisfies
$$
\|c\|^2=\frac{i}{2}\int_S\omega_c\wedge\bar\omega_c\ =
\int_S \re[\omega_c]\wedge\im[\omega_c]\ ,
$$
or, in other words, $\|c\|^2$ is the value of $(c\cdot\ast c)$ on the
fundamental cycle. The operator $c\mapsto\ast c$ on the real cohomology
$H^1(S,\R)$ of a Riemann surface $S$ is called the
\textit{Hodge operator}.

\smallskip
Associated to any  holomorphic 1-form $\omega$ on the Riemann surface $S$, there is a naturally defined space $L_\omega^2(S)$ defined as the completion of smooth functions on $S$ with respect 
to the topology generated by the inner product
\begin{equation}
\label{eq:L2}
\<f,g\>_\omega = \frac{i}{2} \int_S   f \bar g \, \omega\wedge \bar\omega\,, \quad
\text{for any } f, g \in C^\infty(S)\,.
\end{equation}
In other words, the space $L_\omega^2(S)$ is the standard space of square-integrable
functions with respect to the area form $\frac{i}{2} \omega\wedge \bar\omega$ on $S$.

The subspaces $\Cal M_\omega \subset L^2_\omega(S)$ ($\bar{\Cal M}_\omega \subset 
L_\omega^2(S))$ of meromorphic (resp. anti-meromorphic) functions  are both finite dimensional,
of dimension equal to the genus of the surface. In fact, they can be described as follows.
Let $H^{1,0}(S)$ ($H^{0,1}(S)$) be the space of holomorphic (resp. anti-holomorphic) $1$-forms
on $S$. Both have dimension equal to the genus of the surface by Riemann-Roch theorem.
The following characterization of the subspaces $\Cal M_\omega$ and, consequently,
$\bar {\Cal M}_\omega$, holds (see \cite{Fone}, \cite{Ftwo}):
\begin{equation}
\Cal M_\omega = \{ \tilde \omega /\omega \vert  \tilde \omega \in H^{1,0}(S) \} \,.
\end{equation}
We remark the space $\Cal M_\omega$ endowed with the norm of the Hilbert space
$L^2_\omega(S)$ is isometric to the space $H^{1,0}(S)$ endowed with the restriction 
of the hermitian intersection form (\ref{eq:Intform}). In fact,
$$
 \<\frac{\omega_1}{\omega}, \frac{\omega_2}{\omega}\>_\omega =(\omega_1, \omega_2)\,,
\quad \text{ for all } \omega_1\,, \omega_2 \in H^{1,0}(S)\,.
$$
Let $\pi_\omega: L^2_\omega(S) \to \bar{ \Cal M}_\omega$ denote the orthogonal
projection and let $H_\omega$ be the following positive-semidefinite hermitian form 
on $H^{1,0}(S)$:
\begin{equation}
\label{eq:H}
H_\omega(\omega_1, \omega_2):=  \< \pi_\omega (\frac{\omega_1}{\omega}), 
\pi_\omega (\frac{\omega_2}{\omega}) \>_\omega \,, \quad \text {\rm for all }\, 
\omega_1, \omega_2 \in H^{1,0}(S)\,.
\end{equation}
Let $B_\omega$ be the complex bilinear form  on $H^{1,0}(S)$ defined as follows: 
\begin{equation}
\label{eq:B}
B_\omega(\omega_1,\omega_2):=
 \< \frac{\omega_1}{\omega}, \frac{ \bar \omega_2}{\bar\omega}\>_\omega\,, 
\quad  \text{ \rm for all }\, \omega_1, \omega_2 \in H^{1,0}(S).
\end{equation}

The geometric significance of the forms $B_\omega$ and $H_\omega$ is
related to notions of \emph{second fundamental form} and \emph{curvature }of the 
Hodge bundle (see \S 1 in \cite{Forni:Matheus:Zorich:two}). Their significance for the dynamics of 
the Kontsevich--Zorich cocycle  lies in the variational formulas proved in \cite{Ftwo}, \S\S 2-5, 
which give the variation of the Hodge norm on the (real) Hodge bundle $H^1_g$ under the 
action of the Kontsevich--Zorich cocycle (see also \cite{ForniSurvey}, \cite{Forni:Matheus:Zorich:two}).

 The forms $H_\omega$ and $B_\omega$ are in fact related as follows (see \cite{Ftwo}, \S 4). 

 Let $\{\omega_1, \dots, \omega_g\}$ be any orthonormal basis of $H^{1,0}(S)$.
 The restriction of the projection operator $\pi_\omega \vert _{{\Cal M}_\omega}$ 
 is then given by the  following formula:
 \begin{equation}
 \label{eq:proj}
 \pi_\omega (\frac{\tilde\omega}{\omega}) = \sum_{i=1}^g  B_\omega(\tilde\omega, \omega_i)
 {\bar \omega}_i\,, \quad \text{ \rm for all }\, \tilde \omega\in H^{1,0}(S)\,.
 \end{equation}
 It follows that the matrices of $H$ and $B$ of the forms
 $H_\omega$ and $B_\omega$ respectively, with respect to any orthonormal
 basis of the space $H^{1,0}(S)$ are related by the following identity:
 \begin{equation}
 \label{eq:formid}
 H= B B ^\ast\,.
 \end{equation}
 In particular, the forms $H_\omega$ and $B_\omega$ have the same rank and 
  their eigenvalues are related. 
  (The above formula (\ref{eq:formid}) is the correct version of the formula $H= B^\ast B$ which appears as formula $(4.3)$ in \cite{Ftwo} and as formula $(44)$ in \cite{ForniSurvey}. This mistake
there is of no consequence).
  
 Let $ \text{\rm EV}(H_\omega)$ and  $\text{\rm EV}(B_\omega)$ denote the set of eigenvalues of the forms $H_\omega$ and $B_\omega$ respectively. The following identity holds:
  $$
  \text{\rm EV}(H_\omega) = \{ \vert \lambda\vert^2  :  \lambda \in  \text{\rm EV}(B_\omega)\}\,.
 $$   
By the Hodge representation theorem for Riemann surfaces (which
states that any real cohomology class can be represented as the real
or imaginary part of a holomorphic $1$-form), the forms $H_\omega$ and  $B_\omega$ 
on $H^{1,0}(S)$ can be interpreted as bilinear forms, denoted by $H_\omega^\R$ and  
$B_\omega^\R$  respectively, on the real cohomology $H^1(S,\R)$.  The  forms $H_\omega^\R$ and  
$B_\omega^\R$ on $H^1(S,\R)$ have the same rank, which is equal to twice the common 
rank of the forms $H_\omega$ and  $B_\omega$ on $H^{1,0}(S)$. The form
$H_\omega^\R$ is real-valued and positive-semidefinite, while the form $B_\omega^\R$
is complex-valued. For every holomorphic differential $\omega\in \H_g$, the
eigenvalues  of the positive-semidefinite form $H_\omega^\R$ on $H^1(S,\R)$ will be
denoted as follows:
\begin{equation}
\label{eigenvalues}
\Lambda_1(\omega) \equiv 1 \geq \Lambda_2(\omega) \geq \dots \geq \Lambda_g(\omega)\geq 0\,.
\end{equation}
We remark that the above eigenvalues induce well-defined continuous non-negative bounded 
functions on the moduli space of all abelian differentials.

\smallskip
In \cite{Ftwo}, \S2, \S 3 and \S 5 several variational formulas for the Hodge norm on
the real cohomology bundle along  trajectories of the Teichm\"uller flow were proved
and formulas for the Lyapunov exponents of the Kontsevich--Zorich cocycle were derived.
Such formulas generalize the fundamental Kontsevich--Zorich formula for the sum of 
all Lyapunov exponents \cite{Kontsevich}. The formulas are written in \cite{Ftwo} with different 
notational conventions. In fact, as explained above, any abelian holomorphic differential 
$\omega$ on a Riemann surface $S$ induces an isomorphism between the space 
$H^{1,0}(S)$ of all abelian holomorphic differentials on $S$, endowed with
the Hodge norm, and the subspace $\mathcal M_\omega \subset L^2_\omega(S)$ 
of all square integrable meromorphic functions (with respect the area form of the abelian 
differential $\omega$ on $S$). In \cite{Ftwo}, \cite{ForniSurvey} variational formulas
are written in the language of meromorphic functions. We will adopt here the
language of holomorphic abelian differentials.

For any holomorphic abelian differential $\omega$ on a Riemann surface $S$, we defined functions on the Grassmannian $G_k(S,\R)$ of $k$-dimensional isotropic subspaces of $H^1(S,\R)$ as follows. 
Let $I_k \subset H^1(S,\R)$ be any isotropic subspace (with respect to the intersection form)
of dimension $k\in\{1, \dots, g\}$. Let $\{c_1, \dots, c_k\} \subset I_k$ be any  Hodge-orthonormal
basis and let  
$$\{c_1, \dots, c_k, c_{k+1}, \dots,c_g\} \subset H^1(S,\R)
$$ 
be any Hodge-orthonormal Lagrangian completion. Let 
\begin{equation}
\label{eq:Phi_k}
\Phi_k (\omega, I_k) :=  \sum_{i=1}^g \Lambda_i(\omega) - \sum_{i,j=k+1}^g 
\vert B^\R_\omega(c_i, c_j) \vert ^2\,.
\end{equation}
We remark that the above definition is independent of the choice of the orthonormal basis
$\{c_1, \dots, c_k\} \subset I_k$ and of its Hodge-orthonormal Lagrangian completion 
$\{c_1, \dots, c_k, c_{k+1}, \dots,c_g\}$. The function $\Phi_k$ is therefore well-defined
and equivariant under the action of the mapping class group on the Grassmannian bundle 
of the Hodge bundle over the Teichm\"uller space. It induces therefore a function on the 
Grassmannian bundle of the Hodge bundle over the moduli space of all abelian differentials.

\begin{remark}  
The function $\Phi_g$ is the pull-back to the Grassmannian bundle of Lagrangian 
subspaces of the Hodge bundle $H^1_g$ of a function on the moduli space. In other terms, 
it has no dependence on the Lagrangian subspace.  In fact, for every $\omega \in \H_g$ 
and every Lagrangian subspace $I_g \subset H^1(S,\R)$, 
\begin{equation}
\label{eq:Phi_g}
\Phi_g (\omega, I_g) :=  \sum_{i=1}^g \Lambda_i(\omega) \,.
\end{equation}
\end{remark}
This fundamental fact  (discovered in \cite{Kontsevich}) is crucial for the validity of the 
Kontsevich--Zorich formula for the sum of exponents. A version of the formula is given below.

The functions $\Phi_k$ arise in the computation of the hyperbolic Laplacian of the Hodge
norm of isotropic polyvectors  along Teichm\"uller disks.

Let $\{c_1, \dots, c_k\} \subset I_k$ be any Hodge-orthonormal basis of an istropic subspace 
$I_k\subset H^1(S,\R)$ on a Riemann surface $S$. The Euclidean structure defined by the 
Hodge scalar product on $H^1(S,\R)$ defines the natural norm $\Vert c_1 \wedge \dots 
\wedge c_k\Vert_\omega$ of any polyvector which we also call the Hodge norm.
Similarly to the case of the Hodge norm it is defined only by the complex structure of the 
underlying Riemann surface. Thus, for any $(S_0,\omega_0)$ in $\H_g$ the Hodge norm 
$\Vert c_1 \wedge \dots \wedge c_k\Vert_\omega$ defines a smooth function on the
hyperbolic surface obtained as a left quotient $\SO\backslash \SL \omega_0$  of the 
orbit of $\omega_0$. Variation formulas for the Hodge norm of polyvectors were established
in \cite{Ftwo}, Lemmas 5.2 and 5.2':

\begin{lemma} 
\label{lemma:var}
For all $k\in \{1, \dots, g\}$ the following formula holds:
\begin{equation}
\label{eq:var}
\triangle \log \Vert c_1 \wedge \dots \wedge c_k\Vert_\omega =
2 \Phi_k (\omega, I_k)\,.
\end{equation}
\end{lemma}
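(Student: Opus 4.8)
The plan is to compute the hyperbolic Laplacian in $(\ref{eq:var})$ pointwise on the hyperbolic surface $\SO\backslash\SL\omega_0$ and to reduce it to the first- and second-order variational formulas for the Hodge metric along the Teichm\"uller flow established in \cite{Ftwo}, \S\S 2 and 5, together with the algebraic identity $H=BB^{\ast}$ of $(\ref{eq:formid})$. The starting point is the elementary fact that on a Riemannian surface, in geodesic normal coordinates centred at a point $p$ the metric is Euclidean to first order, so $\triangle F(p)$ equals the sum of the second derivatives of $F$ along any two unit-speed geodesics through $p$ with orthonormal initial velocities. I would take $p=\omega$, let $\gamma_1$ be the Teichm\"uller geodesic $t\mapsto g_t\omega$, and let $\gamma_2$ be the geodesic through $\omega$ whose initial tangent Beltrami differential is $i$ times that of $\gamma_1$ (the Teichm\"uller geodesic ``rotated by $45^{\circ}$''); writing $\partial_1,\partial_2$ for differentiation along $\gamma_1,\gamma_2$ at $\omega$, it then suffices to compute $\partial_1^2F+\partial_2^2F$ for $F:=\log\Vert c_1\wedge\dots\wedge c_k\Vert_\omega$.

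Since the classes $c_i$ are flat for the Gauss--Manin connection, they extend to parallel sections of the Hodge bundle over $\SO\backslash\SL\omega_0$, so the Gram matrix $G(\tilde\omega):=\big(\langle c_i,c_j\rangle_{\tilde\omega}\big)_{1\le i,j\le k}$ depends smoothly on $\tilde\omega$, with $G(\omega)=I$ because the basis is Hodge-orthonormal at $\omega$, and $F=\tfrac12\log\det G=\tfrac12\operatorname{tr}\log G$. Differentiating twice and using $G(\omega)=I$ gives, at $\omega$,
\[
\partial_1^2F+\partial_2^2F=\tfrac12\big[\operatorname{tr}(\partial_1^2G+\partial_2^2G)-\operatorname{tr}((\partial_1G)^2)-\operatorname{tr}((\partial_2G)^2)\big].
\]
Into this I would feed the variational formulas of \cite{Ftwo}: the first variation of the Hodge inner product along the Teichm\"uller flow gives $\partial_1G_{ij}=-2\,\re B^\R_\omega(c_i,c_j)$, and since $\gamma_2$ replaces the Teichm\"uller Beltrami differential by $i$ times it, $\partial_2G_{ij}=-2\,\im B^\R_\omega(c_i,c_j)$ (up to an irrelevant sign), so $\operatorname{tr}((\partial_1G)^2)+\operatorname{tr}((\partial_2G)^2)=4\sum_{i,j=1}^k|B^\R_\omega(c_i,c_j)|^2$; and the second variation of the Hodge inner product along Teichm\"uller geodesics (\cite{Ftwo}, \S 5) gives, once the two orthogonal directions are summed so that the anti-holomorphic cross terms cancel, $\partial_1^2G_{ij}+\partial_2^2G_{ij}=8\,H^\R_\omega(c_i,c_j)$. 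Substituting, $\triangle F(\omega)=\partial_1^2F+\partial_2^2F=4\sum_{i=1}^kH^\R_\omega(c_i,c_i)-2\sum_{i,j=1}^k|B^\R_\omega(c_i,c_j)|^2$.

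It remains to recognise the right-hand side as $2\Phi_k(\omega,I_k)$. Complete the Hodge-orthonormal basis $c_1,\dots,c_k$ of $I_k$ to a Hodge-orthonormal Lagrangian basis $c_1,\dots,c_g$ of $H^1(S,\R)$; then the holomorphic $1$-forms $\omega_i$ with $\re\omega_i=c_i$ form an orthonormal basis of $H^{1,0}(S)$, so $(\ref{eq:proj})$ and $(\ref{eq:formid})$ give $H^\R_\omega(c_i,c_i)=\sum_{l=1}^g|B^\R_\omega(c_i,c_l)|^2$, while $\sum_{i=1}^gH^\R_\omega(c_i,c_i)=\sum_{i=1}^g\Lambda_i(\omega)$ for a Hodge-orthonormal basis of any Lagrangian subspace (the Kontsevich fact underlying $(\ref{eq:Phi_g})$). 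Splitting the double sums over $\{1,\dots,g\}$ into the index blocks $\{1,\dots,k\}$ and $\{k+1,\dots,g\}$ and using the symmetry of $B_\omega$, one verifies
\[
2\Phi_k(\omega,I_k)=2\sum_{i=1}^g\Lambda_i(\omega)-2\sum_{i,j=k+1}^g|B^\R_\omega(c_i,c_j)|^2=4\sum_{i=1}^kH^\R_\omega(c_i,c_i)-2\sum_{i,j=1}^k|B^\R_\omega(c_i,c_j)|^2,
\]
which is exactly the value of $\triangle F(\omega)$ obtained above. This also re-proves that $\Phi_k(\omega,I_k)$ does not depend on the chosen orthonormal basis of $I_k$ nor on its Lagrangian completion.

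The step I expect to be the main obstacle is the one genuinely analytic input, the second-variation identity $\partial_1^2G_{ij}+\partial_2^2G_{ij}=8\,H^\R_\omega(c_i,c_j)$ — equivalently, the statement that the curvature of the Hodge metric along Teichm\"uller disks is governed by the positive-semidefinite form $H_\omega$. Its proof rests on Rauch-type variational formulas for the dependence of holomorphic $1$-forms and their periods on the complex structure, on the characterisation $\pi_\omega(\tilde\omega/\omega)=\sum_iB_\omega(\tilde\omega,\omega_i)\bar\omega_i$ of the orthogonal projection onto anti-meromorphic functions, and on elliptic estimates for the Cauchy--Riemann operator attached to $\omega$ (all carried out in \cite{Ftwo}, \S\S 2--5). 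Granted that identity and the first-variation formula, everything else is bookkeeping with Gram matrices together with $H=BB^{\ast}$.
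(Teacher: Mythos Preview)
The paper does not give a proof of this lemma at all: it simply quotes the statement and refers to \cite{Ftwo}, Lemmas~5.2 and~5.2$'$. Your proposal is a faithful reconstruction of the argument behind those lemmas, and the strategy is the right one: reduce the Laplacian at a point to second derivatives along two orthogonal Teichm\"uller geodesics, feed in the first- and second-variation formulas for the Hodge inner product, and then use $H=BB^{\ast}$ to rewrite the result as $2\Phi_k$. Your bookkeeping with the Gram matrix and the block decomposition of $\sum_{i,j}|B^\R_\omega(c_i,c_j)|^2$ over $\{1,\dots,k\}$ and $\{k+1,\dots,g\}$ is correct, and your justification that a Hodge-orthonormal basis of a Lagrangian subspace corresponds to a Hermitian-orthonormal basis of $H^{1,0}(S)$ (so that $\sum_{i=1}^g H^\R_\omega(c_i,c_i)=\sum_i\Lambda_i(\omega)$) is exactly the Kontsevich observation underlying $(\ref{eq:Phi_g})$.

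The one place to be careful is precisely the step you flag yourself: the second-variation identity $\partial_1^2G_{ij}+\partial_2^2G_{ij}=8\,H^\R_\omega(c_i,c_j)$, together with the normalization of the hyperbolic metric so that the Teichm\"uller parameter $t$ is unit speed. These constants are fixed by the conventions in \cite{Ftwo}, \S\S~2--5, and one has to track them through Rauch's formula and the projection identity $(\ref{eq:proj})$; get them wrong and you lose the factor $2$ on the right-hand side of $(\ref{eq:var})$. But modulo that normalization check, your argument is complete and is essentially the proof in the cited reference.
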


From the variational formula of Lemma \ref{lemma:var}, it is possible by integration
to derive a formula for the average growth of the spherical averages of the Hodge norm
of any isotropic polyvector on any Teichm\"uller disk.

\begin{lemma} \label{lemma:averagegrowth}
Let $\omega_0$ be an abelian differential on the Riemann surface $S_0$.
For any $k$-dimensional isotropic subspace $I_k \subset H^1(S_0, \R)$ and 
any basis $\{c_1, \dots, c_k\} \subset I_k$,   let $\Vert  c_1 \wedge \dots \wedge c_k\Vert_\omega$ denote the Hodge norm of the polyvector $c_1 \wedge \dots \wedge c_k$ at $(S, \omega)\in \H_g$ 
for any $\omega \in \SO\backslash \SL  \omega_0$. The following formula holds. Let $D_t(\omega_0)$ denote the disk of hyperbolic radius $t>0$ centered at the origin $\SO \omega_0$ 
of the hyperbolic surface $\SO\backslash \SL \omega_0$ , let $\vert D_t\vert$ denote its hyperbolic area and let $d\Cal A_P$ denote the Poincar\'e area element. We have:
\begin{equation}
\label{eq:averagegrowth}
\frac{1}{2\pi} \frac{\partial}{\partial t} \int_0^{2\pi}  
\log \Vert c_1 \wedge \dots \wedge c_k  \Vert_\omega \,d\theta
= \frac{\tanh(t)}{\vert D_t\vert} \int_{D_t(\omega_0)} \Phi_k (\omega, I_k)  d\Cal A_P \,.
\end{equation}
\end{lemma}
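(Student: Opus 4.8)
The plan is to deduce \eqref{eq:averagegrowth} from the pointwise identity of Lemma~\ref{lemma:var} by the classical relation between the Laplace--Beltrami operator, spherical means, and the divergence theorem on a rotationally symmetric surface. First I would set $F:=\log\Vert c_1\wedge\dots\wedge c_k\Vert_\omega$, regarded as a function on the hyperbolic surface $\SO\backslash\SL\,\omega_0$, which we identify with the hyperbolic plane in the normalization of Lemma~\ref{lemma:var} --- constant curvature $-4$, unit-speed Teichm\"uller flow --- so that $\triangle$ is its Laplace--Beltrami operator (lifting to $\SO\backslash\SL$ if $\omega_0$ has nontrivial automorphisms, so that the metric disk $D_t(\omega_0)$ is embedded). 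Since $c_1,\dots,c_k$ is a fixed basis of a fixed subspace $I_k\subset H^1(S_0,\R)$, transported along the disk by the flat Gauss--Manin connection on the Hodge bundle, the polyvector $c_1\wedge\dots\wedge c_k$ never vanishes and its Hodge norm is a strictly positive real-analytic function; hence $F$ is smooth and all the differentiations and integrations below are justified, with $\triangle F=2\,\Phi_k(\cdot,I_k)$.

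Next I would introduce the spherical mean $m(t):=\frac1{2\pi}\int_0^{2\pi}F\,d\theta$ over the geodesic circle $\partial D_t(\omega_0)$, so that the left-hand side of \eqref{eq:averagegrowth} is exactly $m'(t)$. In geodesic polar coordinates $(\rho,\theta)$ centered at $\omega_0$ the metric is $d\rho^2+s(\rho)^2\,d\theta^2$ with $s(\rho)=\tfrac12\sinh(2\rho)$; along $\partial D_t$ the outward unit normal is $\partial/\partial\rho$, the arclength element is $s(t)\,d\theta$, and the circumference is $L(t):=2\pi s(t)=\pi\sinh(2t)$. The divergence theorem applied on $D_t(\omega_0)$, together with $\triangle F=2\,\Phi_k(\cdot,I_k)$ from Lemma~\ref{lemma:var}, then gives
\begin{equation}
2\int_{D_t(\omega_0)}\Phi_k(\omega,I_k)\,d\Cal A_P=\int_{D_t(\omega_0)}\triangle F\,d\Cal A_P
=\int_0^{2\pi}\frac{\partial F}{\partial\rho}(t,\theta)\,s(t)\,d\theta
=s(t)\,\frac{\partial}{\partial t}\!\int_0^{2\pi}\!F(t,\theta)\,d\theta=L(t)\,m'(t)\,.
\end{equation}

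Finally, it remains to identify the prefactor. Since $\vert D_t\vert=\int_0^t L(\rho)\,d\rho=\pi\sinh^2 t$ and $L(t)=\pi\sinh(2t)=2\pi\sinh t\cosh t$, one has $L(t)\tanh t=2\pi\sinh^2 t=2\vert D_t\vert$, i.e. $2/L(t)=\tanh(t)/\vert D_t\vert$; substituting into $m'(t)=\tfrac{2}{L(t)}\int_{D_t(\omega_0)}\Phi_k\,d\Cal A_P$ yields precisely \eqref{eq:averagegrowth}. I do not expect any serious obstacle here: the nontrivial input is Lemma~\ref{lemma:var}, and the rest is the standard Green's-identity computation for spherical means on a surface of revolution. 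The one point that requires care is keeping the normalization of the hyperbolic metric consistent throughout --- the factor $2$ in Lemma~\ref{lemma:var} and the factor $\tanh(t)$ here are two manifestations of the same choice of scale --- so the same convention must be used for $\triangle$, for the hyperbolic radius, and for the Poincar\'e area element $d\Cal A_P$.
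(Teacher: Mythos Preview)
Your proof is correct and follows essentially the same approach as the paper's: the paper's proof simply states that the formula follows from Lemma~\ref{lemma:var} by Green's formula (or explicit integration of the Poisson equation on the Poincar\'e disk), which is exactly the divergence-theorem computation you carry out. Your careful tracking of the curvature~$-4$ normalization, which accounts for both the factor~$2$ in Lemma~\ref{lemma:var} and the $\tanh(t)$ prefactor, makes the consistency explicit where the paper leaves it implicit.
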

\begin{proof} The formula was derived in \cite{Ftwo} as formula 
$(5.10)$. It follows from the variational formula of Lemma \ref{lemma:var} 
by Green formula or explicit integration of the Poisson equation for the hyperbolic Laplacian 
on the Poincar\'e disk (as in \cite{Ftwo}, Lemma 3.1). 
\end{proof}

By the above formula, it is possible to derive formulas for the partial sums of the Lyapunov
exponents of the Kontsevich--Zorich cocyle. Here is our main result.
\begin{theorem} 
\label{thm:partialsum}
Let $\mu$ be any $\SL$-invariant Borel probability ergodic measure on the 
moduli space $\H_g$ of normalized abelian differentials. Assume that  $\lambda^\mu_k > 
\lambda^\mu_{k+1}$, for $k\in \{1, \dots, g-1\}$,  and let $E^+_k$ denote the Oseledec's subbundle carrying the subset $\{\lambda^\mu_1, \dots, \lambda^\mu_k\}$ of the Lyapunov spectrum. Then the following formula holds:
\begin{equation}
\label{eq:partialsum}
\lambda^\mu_1 + \dots +\lambda^\mu_k =
 \int_{\H_g}  \Phi_k\left(\omega,E^+_k(\omega)\right) d \mu(\omega) \,.
\end{equation}
\end{theorem}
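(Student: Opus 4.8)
The plan is to combine the average-growth formula of Lemma~\ref{lemma:averagegrowth} with the Oseledec--Kingman subadditive ergodic theorem and a standard argument of the Birkhoff type that reduces the Lyapunov exponents of the Kontsevich--Zorich cocycle to the asymptotic growth of the Hodge norm of a polyvector. Fix $\mu$-generically an abelian differential $(S_0,\omega_0)$ and choose a Hodge-orthonormal basis $\{c_1,\dots,c_k\}$ of the Oseledec subbundle $E^+_k(\omega_0)$. Since $E^+_k$ is the unstable bundle carrying $\{\lambda^\mu_1,\dots,\lambda^\mu_k\}$ and since $\lambda^\mu_k>\lambda^\mu_{k+1}$, the wedge product $c_1\wedge\dots\wedge c_k$ generates the top exterior power $\bigwedge^k E^+_k$, so the cocycle acting on $\bigwedge^k H^1(S,\R)$ has, along the $g_t$-orbit of $\omega_0$, its leading Lyapunov behaviour given precisely by
\begin{equation}
\lim_{t\to+\infty}\frac{1}{t}\log\Vert g_t(c_1\wedge\dots\wedge c_k)\Vert_{g_t\omega_0}
= \lambda^\mu_1+\dots+\lambda^\mu_k \,,
\end{equation}
for $\mu$-almost every $\omega_0$; here we use that the Hodge norm is bounded on compact sets together with the fact that the cocycle is $\log$-integrable (both top exponents are finite, indeed $\lambda_1=1$).

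Next I would relate this one-sided (geodesic) limit to the two-sided average over the $\SL$-orbit appearing in Lemma~\ref{lemma:averagegrowth}. The key point is that $\mu$ is $\SL$-invariant, so the circle average $\frac{1}{2\pi}\int_0^{2\pi}\log\Vert c_1\wedge\dots\wedge c_k\Vert_{k_\theta\omega}\,d\theta$ has the same $g_t$-growth rate as $\log\Vert c_1\wedge\dots\wedge c_k\Vert_\omega$ itself (averaging over the compact group $\SO$ costs at most a bounded additive error, since along each $k_\theta$-orbit the Hodge norm varies by a bounded amount, being a continuous function on the compact fibres). Thus the quantity on the left of \eqref{eq:averagegrowth}, integrated in $t$ from $0$ to $T$ and divided by $T$, tends as $T\to+\infty$ to $\lambda^\mu_1+\dots+\lambda^\mu_k$. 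On the right-hand side of \eqref{eq:averagegrowth}, the factor $\tanh(t)\to 1$, and $\frac{1}{|D_t|}\int_{D_t(\omega_0)}\Phi_k(\omega,E^+_k(\omega))\,d\Cal A_P$ is a hyperbolic-disk average of the continuous function $\Phi_k(\cdot,E^+_k(\cdot))$ along the Teichm\"uller disk through $\omega_0$. By the ergodic theorem for the $\SL$-action (the disk averages converge $\mu$-a.e. to the space average, since the hyperbolic disks $D_t$ form a Følner-type family for $\SL$ and $\mu$ is ergodic), this average converges to $\int_{\H_g}\Phi_k(\omega,E^+_k(\omega))\,d\mu(\omega)$. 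Equating the two limits gives \eqref{eq:partialsum}.

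Two points require care and constitute the main obstacles. The first is the interchange of limits and integrals: $\Phi_k$ is continuous and bounded (the $\Lambda_i$ are bounded by $1$ and the $|B^\R_\omega(c_i,c_j)|^2$ are bounded on compact sets, but one must check global integrability of $\Phi_k(\cdot,E^+_k(\cdot))$ against $\mu$, which follows from $\sum\Lambda_i\le g$ and $|B^\R_\omega|\le H^\R_\omega$, hence $0\le\Phi_k\le g$); this boundedness lets one apply dominated convergence and justifies exchanging the $t$-derivative/integral. The second, and genuinely delicate, point is the identification of the \emph{leading} growth rate of the polyvector norm with the \emph{sum} $\lambda^\mu_1+\dots+\lambda^\mu_k$ rather than with a smaller quantity: this is exactly where the spectral gap hypothesis $\lambda^\mu_k>\lambda^\mu_{k+1}$ enters, guaranteeing via Oseledec's theorem that $E^+_k$ is the well-defined fastest $k$-dimensional subbundle and that a $\mu$-generic $k$-frame inside it does not partially collapse onto slower directions. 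I would isolate this as the heart of the argument, invoking the multiplicative ergodic theorem on the exterior power cocycle $\bigwedge^k\rho_t$ whose top exponent is $\lambda^\mu_1+\dots+\lambda^\mu_k$ with the corresponding top Oseledec line spanned by $c_1\wedge\dots\wedge c_k$.
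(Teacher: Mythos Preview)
There is a genuine gap in your treatment of the right-hand side of formula~\eqref{eq:averagegrowth}. That formula has integrand $\Phi_k(\omega, I_k)$ where $I_k$ is the \emph{fixed} isotropic subspace spanned by $c_1,\dots,c_k$, parallel-transported by Gauss--Manin over the Teichm\"uller disk. If you choose $I_k = E^+_k(\omega_0)$, then along the ray $\theta=0$ the $g_t$-invariance of $E^+_k$ does give $I_k = E^+_k(g_t\omega_0)$, but at a general disk point $\omega_{(s,\theta)}$ with $\theta\neq 0$ the unstable space $E^+_k(\omega_{(s,\theta)})$ is in general \emph{different} from $E^+_k(\omega_0)$ (the bundle $E^+_k$ is invariant only under the geodesic flow, not under the full $\SL$-action). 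Hence the right-hand side is \emph{not} the disk average of the function $\omega\mapsto\Phi_k(\omega,E^+_k(\omega))$, and your appeal to an ergodic theorem for disk averages does not produce the desired integral. (A related slip: $\omega\mapsto E^+_k(\omega)$ is only measurable, so $\Phi_k(\cdot,E^+_k(\cdot))$ is not continuous as you assert.) Your claim on the left-hand side that the $\SO$-average differs from the single-geodesic growth by a bounded additive term is likewise unjustified: the circle of hyperbolic radius $t$ consists of endpoints of \emph{distinct} geodesic rays, and the fixed polyvector $c_1\wedge\dots\wedge c_k$ grows at the top rate along the ray in direction $\theta$ only when $I_k$ is transverse to the slow Oseledec subspaces for that direction, a condition that is not automatic for the specific choice $I_k=E^+_k(\omega_0)$ and certainly does not follow from compactness of $\SO$.

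The paper closes this gap by an extra averaging step that you have omitted. Rather than fix $I_k=E^+_k(\omega_0)$, one integrates over the Grassmannian $G_k(S,\R)$ with respect to the $\SO$-invariant Haar measure $\sigma^k_\omega$. By Oseledec's theorem, for $\mu$-a.e.\ $\omega$, a.e.\ $\theta$, and $\sigma^k_\omega$-a.e.\ $I_k$, the transported subspace $I_k$ converges to $E^+_k(t,\theta)$ as $t\to+\infty$; since $\Phi_k$ is bounded and continuous in the Grassmannian variable, dominated convergence then yields
\[
\lim_{t\to+\infty}\frac{1}{|D_t|}\int_{G_k(S,\R)}\int_{D_t(\omega)}\bigl|\Phi_k(\omega_{(s,\theta)},I_k)-\Phi_k(\omega_{(s,\theta)},E^+_k(s,\theta))\bigr|\,d\Cal A_P\,d\sigma^k_\omega = 0\,.
\]
Only after this step does one integrate further over $\mu$ and use Fubini, the $\SL$-invariance of $\mu$, and the cocycle-invariance of $E^+_k$ to replace the disk average on the right by the space integral $\int_{\H_g}\Phi_k(\omega,E^+_k(\omega))\,d\mu$; the left-hand side, averaged over $G_k(S,\R)$, over $\mu$, and over $[0,T]$ in the radial variable, is identified with $\lambda^\mu_1+\dots+\lambda^\mu_k$ by Oseledec's theorem applied to \emph{generic} $I_k$.
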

\begin{proof} (see \cite{Ftwo}, Corollary 5.5)
For any given $(S,\omega)\in \H_g$, let $\sigma^k_\omega $ denote the normalized canonical (Haar) measure on the Grassmannian $G_k(S,\R)$ of isotropic $k$-dimensional subspaces $I_k\subset 
H^1(S,\R)$, endowed with the euclidean structure given by the Hodge inner product. The
probability measure $\sigma^k_\omega$ is invariant under the action of the circle group $\SO$
on $\H_g$. Let $(t,\theta)\in \R^+\times S^1$ denote the geodesic polar coordinates centered at
the origin $\SO \omega$ on the hyperbolic surface $ \SO \backslash \SL \omega$, let
$\text{dist}_{(t,\theta)}^k$ denote the Hodge distance on the Grassmannian of isotropic 
$k$-dimensional subspaces and let $E^+_k(t,\theta)$ denote the unstable Oseledec's subspace at the abelian differential $\omega_{(t,\theta)} \in  \SO \backslash \SL \omega$.

By Oseledec's theorem, for $\mu$-almost all $(S,\omega) \in \H_g$, almost all $\theta\in S^1$ and $\sigma^k_\omega$-almost all $k$-dimensional isotropic subspaces $I_k\in G_k(S,\R)$,
$$ \lim_{t\to+\infty}  \text{dist}^k_{(t,\theta)} (I_k , E^+_k(t,\theta)\bigr) \,\,=\,\, 0\,\,. $$
Consequently, since the function $\Phi_k$ is bounded and continuous, by Fubini's theorem and the dominated convergence theorem,  for $\mu$-almost all $\omega \in \H_g$,
\begin{equation}
\label{eq:isotropic_conv}
\begin{aligned} 
\lim_{t\to+\infty} 
 \frac{1}{\vert D_t\vert} & \int_{G_k(S,\R)}\int_{D_t(\omega)}
 \vert \Phi_k \left( \omega_{(s,\theta)},I_k\right) \\ & -\,\Phi_k  \left( \omega_{(s,\theta)},E^+_k(s,\theta) \right) \vert \, d\Cal A_P(s,\theta)\, d\sigma^k_\omega \,\,= \,\, 0\,.
 \end{aligned}
 \end{equation}
For any $(S,\omega)\in \H_g$ and any isotropic subspace  $I_k\in G_k(S,\R)$, the Hodge norm
$\Vert c_1 \wedge \dots \wedge c_k\Vert_\omega$ does not depend on the Hodge orthonormal
basis $\{c_1, \dots, c_k\} \subset I_k$, hence it defines a function on the Grassmannian bundle
of $k$-dimensional isotrpoic subspaces of the (real) Hodge bundle. By formula (\ref{eq:isotropic_conv}) and by averaging formula  (\ref{eq:averagegrowth}) over $G_k(S,\R)$ with respect to the measure $\sigma^k_\omega$, then with respect to the  $\SL$-invariant probability measure $\mu$ on $\H_g$ and by applying Fubini's theorem and the dominated convergence theorem, we find that, since $E^+_k$ is an invariant subbundle of  the Kontsevich--Zorich cocycle, the following formula holds:
\begin{equation}
\begin{aligned}
\lim_{t\to +\infty} \frac{\partial}{\partial t}  \int_{\H_g}\int_{G_k(S,\R)}
 &\log  \Vert  c_1 \wedge \dots \wedge c_k \Vert_{\omega_{(t,\theta)}}
 \,d\sigma^k_\omega d\mu(\omega)  \\   &-\, \tanh(t) \int_{\H_g} 
\Phi_k\bigl(\omega,E^+_k(\omega)\bigr)\,d\mu(\omega)\,\,= \,\, 0\,.
\end{aligned}
 \end{equation}
Finally, by averaging over $[0,T]$ with respect to the radial coordinate $t>0$ (Teichm\"uller time) 
and by Oseledec's theorem, we obtain  formula (\ref{eq:partialsum}).

\end{proof}

In the case $k=g$, since the function $\Phi_g$ does not depend on the Lagrangian subspace of
the cohomology, Theorem~\ref{thm:partialsum} reduces to a version of the Kontsevich--Zorich formula for the sum of all non-negative Lyapunov exponents (see \cite{Kontsevich} and \cite{Ftwo}, Corollary 5.3):
\begin{corollary} 
\label{cor:KZformula}
Let $\mu$ be any $\SL$-invariant Borel probability ergodic measure on the 
moduli space $\H_g$ of normalized abelian differentials. The following formula holds:
$$
\lambda^\mu_1 + \dots +\lambda^\mu_g= \int_{\H_g} (\Lambda_1+ \dots + \Lambda_g) d\mu \,.
$$
\end{corollary}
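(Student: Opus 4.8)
The plan is to obtain this formula as the case $k=g$ of the partial-sum identity, which is \emph{not} directly covered by Theorem~\ref{thm:partialsum} (stated only for $k\leq g-1$, and in any case requiring a spectral gap), but which follows from a simplified version of its proof. The essential simplification is the Remark above: by formula~(\ref{eq:Phi_g}) the function $\Phi_g(\omega,I_g)=\Lambda_1(\omega)+\dots+\Lambda_g(\omega)$ does not depend on the Lagrangian subspace $I_g\subset H^1(S,\R)$, so the Oseledec subbundle $E^+_g$ — which need not even be well defined when $\lambda^\mu_g=0$ — plays no role, and no spectral-gap hypothesis is needed. Thus the identity holds for \emph{every} $\SL$-invariant ergodic $\mu$.

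First I would fix $(S_0,\omega_0)\in\H_g$ and apply Lemma~\ref{lemma:averagegrowth} with $k=g$: for any Lagrangian $I_g\subset H^1(S_0,\R)$ with Hodge-orthonormal basis $\{c_1,\dots,c_g\}$,
\begin{equation*}
\frac{1}{2\pi}\frac{\partial}{\partial t}\int_0^{2\pi}\log\Vert c_1\wedge\dots\wedge c_g\Vert_{\omega_{(t,\theta)}}\,d\theta
=\frac{\tanh(t)}{\vert D_t\vert}\int_{D_t(\omega_0)}\bigl(\Lambda_1+\dots+\Lambda_g\bigr)(\omega)\,d\Cal A_P\,,
\end{equation*}
where I have replaced $\Phi_g$ by $\Lambda_1+\dots+\Lambda_g$ via (\ref{eq:Phi_g}); the right-hand side no longer depends on $I_g$. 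Since $\Vert c_1\wedge\dots\wedge c_g\Vert_\omega$ depends only on $I_g$ and the $\Lambda_i$ are bounded continuous functions on $\H_g$, I would integrate this over $\H_g$ with respect to $\mu$ and over $G_g(S,\R)$ with respect to the Haar measure $\sigma^g_\omega$ (which acts trivially on the right-hand side). Because $\mu$ is $\SL$-invariant and each $\omega_{(s,\theta)}$ lies in the $\SL$-orbit of $\omega$, Fubini's theorem shows the spherical average over $D_t$ of the constant $\int_{\H_g}(\Lambda_1+\dots+\Lambda_g)\,d\mu$ is just that constant, so that
\begin{equation*}
\frac{1}{2\pi}\frac{\partial}{\partial t}\int_{\H_g}\!\int_{G_g(S,\R)}\!\int_0^{2\pi}\log\Vert c_1\wedge\dots\wedge c_g\Vert_{\omega_{(t,\theta)}}\,d\theta\,d\sigma^g_\omega\,d\mu(\omega)=\tanh(t)\int_{\H_g}(\Lambda_1+\dots+\Lambda_g)\,d\mu\,.
\end{equation*}

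Next I would average this identity in $t$ over $[0,T]$ and divide by $T$. On the right, $\tfrac1T\int_0^T\tanh(t)\,dt\to 1$, so the right-hand side tends to $\int_{\H_g}(\Lambda_1+\dots+\Lambda_g)\,d\mu$, the boundary term at $t=0$ being $O(1/T)$. On the left, by Oseledec's theorem applied to the Kontsevich--Zorich cocycle, for $\mu$-almost every $\omega$, almost every $\theta$ and $\sigma^g_\omega$-almost every Lagrangian $I_g$ one has $\tfrac1T\log\Vert c_1\wedge\dots\wedge c_g\Vert_{\omega_{(T,\theta)}}\to\lambda^\mu_1+\dots+\lambda^\mu_g$; combined with the a priori bounds on the Hodge norm along Teichm\"uller orbits from \cite{Ftwo}, this permits passing to the limit under the integral sign by dominated convergence. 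Equating the two limits gives $\lambda^\mu_1+\dots+\lambda^\mu_g=\int_{\H_g}(\Lambda_1+\dots+\Lambda_g)\,d\mu$.

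The main obstacle is the left-hand limit when $\lambda^\mu_g=0$ (no spectral gap at position $g$): one must check that a generic Lagrangian subspace (with respect to $\sigma^g_\omega$) realizes the growth rate $\lambda^\mu_1+\dots+\lambda^\mu_g$, i.e.\ the sum of the $g$ largest exponents. This holds because the cocycle is symplectic with spectrum symmetric about the origin, so the stable subbundle $E^-$ has dimension $\leq g$; a generic Lagrangian is transverse to $E^-$ and surjects onto the unstable subbundle $E^+$ under the associated projection, whence the $g$-dimensional Hodge volume it spans grows precisely at the rate $\lambda^\mu_1+\dots+\lambda^\mu_g$. The exceptional Lagrangians form a proper subvariety of the Lagrangian Grassmannian and are $\sigma^g_\omega$-null. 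The interchange of $\lim_{T\to\infty}$ with the integrations is handled exactly as in the proof of Theorem~\ref{thm:partialsum}, using the boundedness of $\Phi_g$ and of $\tfrac1T\log$ of the polyvector norm.
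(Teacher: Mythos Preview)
Your argument is correct and follows precisely the route the paper indicates: specialize the proof of Theorem~\ref{thm:partialsum} to $k=g$, where the independence of $\Phi_g$ from the Lagrangian subspace (formula~(\ref{eq:Phi_g})) renders the convergence step~(\ref{eq:isotropic_conv}) vacuous and eliminates any need for a spectral-gap hypothesis or for identifying $E^+_g$. Your explicit treatment of the case $\lambda^\mu_g=0$---verifying that a $\sigma^g_\omega$-generic Lagrangian still realizes the growth rate $\lambda^\mu_1+\dots+\lambda^\mu_g$---fills in a point the paper leaves implicit; the only quibble is terminological: when $\dim E^-<g$ one cannot have $V+E^-=H^1(S,\R)$, so ``transverse to $E^-$'' should read ``$V\cap E^-=\{0\}$'', which together with the surjection onto $E^+$ is exactly the pair of open conditions needed.
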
 

\section{Lagrangian abelian differentials}
\label{Lab}
The following crucial transversality result holds:

\begin{lemma} 
\label{lemma:transversality}
The unstable bundle $E^+$ of the Kontsevich--Zorich cocycle is $\mu$-almost everywhere
transverse to all integral Lagrangian subspaces,  with respect to any $\SL$-invariant 
ergodic probability measure $\mu$ on a stratum $\H(\kappa)$, that is, for $\mu$-almost all
$(S,\omega) \in \H(\kappa)$ and for any integral Lagrangian subspace $\Lambda \subset H^1(S,\R)$,  
$$
E^+(\omega) \cap \Lambda =\{0\}\,.
$$
\end{lemma}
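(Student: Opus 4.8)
The plan is to argue by contradiction, exploiting the one arithmetic feature available: the Kontsevich--Zorich cocycle $\{\rho_t\}$ preserves the integral symplectic lattice $H^1(S,\Z)\subset H^1(S,\R)$, hence carries integral Lagrangian subspaces to integral Lagrangian subspaces. Consider the set
\[
Z:=\bigl\{(S,\omega)\in\H(\kappa) : E^+(\omega)\cap\Lambda\neq\{0\}\ \text{for some integral Lagrangian }\Lambda\subset H^1(S,\R)\bigr\}.
\]
Since the set of integral Lagrangian subspaces of $H^1(S,\Z)\otimes\R$ is countable, $Z$ is measurable; it is moreover invariant under the Teichm\"uller flow, because $E^+$ is equivariant and $\rho_t$ preserves both integrality and the symplectic form. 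As a $\SL$-invariant ergodic probability measure is ergodic for the Teichm\"uller flow (by the Howe--Moore property of $\SL$), $\mu(Z)\in\{0,1\}$, and $\mu(Z)=0$ is exactly the conclusion of the lemma, so assume for contradiction $\mu(Z)=1$. Then a measurable selection provides, for $\mu$-a.e.\ $\omega$, an integral Lagrangian $\Lambda(\omega)$ and a Hodge--unit vector $v(\omega)\in E^+(\omega)\cap\Lambda(\omega)$; transporting $\Lambda(\omega)$ and $v(\omega)$ by $\rho_t$ along the Teichm\"uller orbit of $\omega$ yields, along that orbit, an equivariant family of integral Lagrangian sublattices $\Lambda_\Z\subset H^1(\cdot,\Z)$ together with a vector $v\in E^+\cap(\Lambda_\Z\otimes\R)$.

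The contradiction should come from playing the dynamical behaviour of $E^+$ against the arithmetic rigidity of $\Lambda_\Z$. On the dynamical side, $v(\omega)\in E^+(\omega)$ gives exponential contraction along the backward orbit, $\|\rho_{-t}v(\omega)\|_{g_{-t}\omega}\le C(\omega)e^{-\varepsilon t}$, with $\varepsilon>0$ at least the smallest strictly positive exponent of the cocycle on $E^+$. On the arithmetic side, Poincar\'e recurrence for the finite measure $\mu$ forces $\{g_{-t}\omega\}$ to return infinitely often to a compact set $K\subset\H(\kappa)$ on which the Hodge metric is uniformly comparable to a fixed reference one; hence on $K$ every nonzero integral cohomology class has Hodge norm bounded below by some $\delta>0$, and every integral Lagrangian sublattice has Hodge covolume bounded below. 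Finally the variational formulas of \S\ref{expformulas} control the evolution of that covolume: writing $v(\omega)=\sum_i a_i c_i(\omega)$ in a fixed $\Z$-basis $\{c_i(\omega)\}$ of $\Lambda_\Z$, each of $\log\|\rho_{-t}c_i(\omega)\|_{g_{-t}\omega}$ and $\log\|\rho_{-t}(c_1(\omega)\wedge\cdots\wedge c_g(\omega))\|_{g_{-t}\omega}$ (the latter governed, via Lemma~\ref{lemma:var} with $k=g$, by $\triangle\log\|c_1\wedge\cdots\wedge c_g\|_\omega=2\sum_i\Lambda_i(\omega)$ together with the $\pm1$-type gradient bound for the Hodge norm) has logarithmic derivative bounded by an absolute constant.

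The heart of the matter --- and the step I expect to be the main obstacle --- is to turn these three inputs into an effective contradiction. At the recurrence times the transported lattice $\rho_{-t}\Lambda_\Z$ has covolume bounded above and below and every nonzero vector of Hodge norm $\ge\delta$, so its successive minima lie in a fixed compact interval away from $0$; yet $\rho_{-t}v(\omega)=\sum_i a_i\,\rho_{-t}c_i(\omega)$, with \emph{fixed} coefficients, contracts exponentially. Expressing $\{\rho_{-t}c_i(\omega)\}$ through a reduced basis of $\rho_{-t}\Lambda_\Z$ shows that the $\mathrm{GL}(g,\Z)$ change--of--basis matrices must escape to infinity, and one must show this escape is incompatible with $v(\omega)$ lying simultaneously in $E^+(\omega)$ and in the rational subspace $\Lambda(\omega)$ --- the delicate point being that an integral Lagrangian can a priori sit in ``Liouville'' position relative to $E^+$, so that the mere boundedness of the lattice geometry is not enough and the variational bounds on the covolume and on the individual classes must be used to preclude it. Making this incompatibility quantitative --- equivalently, proving that a vector contracting exponentially under the cocycle cannot lie in an integral Lagrangian sublattice whose geometry stays bounded along a recurrent orbit --- is where the integrality hypothesis enters essentially, and it constitutes the main improvement over the argument in \cite{Ftwo}.
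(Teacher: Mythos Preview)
Your proposal has a genuine gap: you set up the contradiction framework but never close it, and the mechanism you outline does not obviously produce one. The core difficulty is that the contracting vector $v(\omega)\in E^+(\omega)\cap\Lambda(\omega)$ is a \emph{real} vector in the real span of the integral lattice $\Lambda_\Z$, not an integral class. A real vector with irrational coordinates in a lattice can shrink to zero under a linear cocycle while every nonzero lattice point stays bounded away from zero and the covolume remains controlled; there is no Diophantine obstruction of the kind you are hoping for. Your own phrasing (``an integral Lagrangian can a priori sit in `Liouville' position relative to $E^+$'') identifies exactly this problem, and the final paragraph is a description of what would need to be shown rather than an argument showing it. The change-of-basis matrices to a reduced basis of $\rho_{-t}\Lambda_\Z$ can indeed go to infinity, but that by itself says nothing incompatible with $v$ lying in both $E^+$ and $\Lambda$.

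The paper's proof rests on a completely different idea that you have overlooked: the function $\Phi_g$ governing the logarithmic growth of a Lagrangian $g$-vector under the cocycle is \emph{independent of the Lagrangian} (formula~(\ref{eq:Phi_g}) and Corollary~\ref{cor:KZformula}). Combined with the Nevo--Stein pointwise ergodic theorem for $\SL$-ball averages, this forces, for $\mu$-a.e.\ $\omega$ and \emph{every} Lagrangian $\Lambda$, the spherical average $\frac{1}{2\pi t}\int_0^{2\pi}\log\|c_1\wedge\cdots\wedge c_g\|_{\omega_{(t,\theta)}}\,d\theta$ to converge to the full sum $\lambda_1^\mu+\cdots+\lambda_g^\mu$. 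But if $\Lambda\cap E^-(0,\theta)\neq\{0\}$ on a set of $\theta$ of positive Lebesgue measure, Oseledec's theorem gives a strictly smaller limit for those $\theta$, hence for the average---a contradiction. Integrality enters only at the very end, through countability, to pass from ``for each fixed $\Lambda$, a.e.\ $\theta$'' to ``a.e.\ $\theta$, for all integral $\Lambda$ simultaneously.'' In particular, no lattice-geometry or recurrence argument is needed; the entire weight is carried by the Kontsevich--Zorich identity for $\Phi_g$.
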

\begin{proof} Let $\mu$ be any $\SL$-invariant Borel probability measure $\mu$ on $\H_g$.
By Oseledec's and Fubini's theorems, for $\mu$-almost all $(S,\omega)\in \H_g$ 
the Oseledec's stable/unstable spaces $E^\pm (e^{i\frac{\theta}{2}}\omega) \subset H^1(S,\R)$ are 
well-defined for (Lebesgue) almost all $\theta \in S^1$ and the following identity holds:
$$
E^+(e^{i\frac{\theta}{2}} \omega) = E^-( e^{i \frac{\theta+\pi}{2}} \omega) \,,
\quad \text{ \rm for almost all }\theta\in S^1.
$$
It follows that the unstable bundle $E^+$ is $\mu$-almost everywhere transverse
to all integral Lagrangian subspaces if and only if the stable subbundle $E^-$ is.

Let $(S,\omega)\in \H_g$ and let $(t,\theta)\in \R^+\times S^1$ denote the
geodesic polar coordinate centered at the origin $\SO\omega$ on the hyperbolic surface 
$\SO \backslash\SL \omega$. Let $E^-(t,\theta)$ denote the stable Oseledec's space at 
$\omega_{(t,\theta)} \in\SO \backslash \SL \omega$. Let $\Lambda \subset H^1(S, \R)$ be any Lagrangian subspace and
let $\{c_1, \dots, c_g\} \subset \Lambda$ be any basis. By Oseledec's theorem, for $\mu$-almost all 
$\omega\in \H_g$ and for almost all $\theta\in S^1$ the following holds:   

if $\Lambda \cap E^-(0, \theta) 
\not = \{0\}$, then
\begin{equation}
\label{eq:gapone}
\lim_{t\to +\infty}  \frac{1}{t}  \log \Vert c_1 \wedge \dots \wedge c_g \Vert_{\omega_{(t,\theta)}} 
< \lambda^\mu_1 + \dots + \lambda^\mu_g\,.
\end{equation}
Thus, for all $\mu$-almost all $\omega\in H_g$, if the set 
\begin{equation}
\label{eq:non_transverse}
\Cal N^-_\Lambda(\omega):=\{\theta\in S^1\vert \Lambda \cap E^-(0, \theta))\not= \{0\} \} \,
\subset \,S^1
\end{equation}
has positive Lebesgue measure then, by averaging the limit in formula (\ref{eq:gapone}) over the circle
and applying the Lebesgue dominated convergence theorem, we conclude that
\begin{equation}
\label{eq:gaptwo}
\lim_{t\to +\infty}  \frac{1}{2\pi t}  \int_0^{2\pi} 
\log \Vert c_1 \wedge \dots \wedge c_g \Vert_{\omega_{(t,\theta)}} \,d\theta
< \lambda^\mu_1 + \dots + \lambda^\mu_g\,.
\end{equation}
However,  since the function $\Phi_g:  \SO\backslash \H_g \to \R^+$, defined in  formulas (\ref{eq:Phi_k})
and  (\ref{eq:Phi_g}), is bounded, by the pointwise ergodic theorem for $\SL$-balls of A.~Nevo and E.~M.~Stein  \cite{Nevo:Stein} and by the  Kontsevich--Zorich formula (see Corollary \ref{cor:KZformula}),  it follows that, for $\mu$-almost all $(S,\omega) \in \H_g$, 
\begin{equation}
\label{eq:NevoStein}
\lim_{t\to +\infty}\frac{1}{\vert D_t\vert} \int_{D_t(\omega)} \Phi_g \,  d\Cal A_P =
\int_{\H_g}   \Phi_g \, d\mu = \lambda^\mu_1 + \dots + \lambda^\mu_g\,.
\end{equation}
By averaging formula (\ref{eq:averagegrowth}) over $[0,t]$ with respect to the radial coordinate and
by formula (\ref{eq:NevoStein}), we derive that for $\mu$-almost all $(S,\omega) \in \H_g$, for any Lagrangian subspace $\Lambda \subset H^1(S,\R)$ and for any basis $\{c_1, \dots, c_g\} \subset \Lambda$,
\begin{equation}
\lim_{t\to +\infty} \frac{1}{2\pi t }  \int_0^{2\pi}  
\log \Vert c_1 \wedge \dots \wedge c_g  \Vert_{\omega_{(t,\theta)}} \,d\theta
=   \lambda^\mu_1 + \dots + \lambda^\mu_g\,.
\end{equation}
which implies that, for  $\mu$-almost all $(S,\omega)\in \H_g$ and for any 
Lagrangian subspace $\Lambda\in H^1(S,\R)$, the set $\Cal N^-_\Lambda(\omega)$ defined
in formula (\ref{eq:non_transverse}) has zero Lebesgue measure, otherwise the inequality in
formula (\ref{eq:gaptwo}) holds. Since the set of all integral Lagrangian subspaces 
is countable, it then follows by Fubini's theorem that the stable subbundle $E^-$  is $\mu$-almost everywhere transverse to all integral Lagrangian subspaces.  
\end{proof}

\bigskip
\noindent Lemma~8.4 of \cite{Ftwo} is replaced by the following statement.

\begin{lemma} 
\label{lemma:specialpoints}
For every cuspidal Lagrangian $\SL$-invariant ergodic probability measure
$\mu$ on a stratum $\H(\kappa)$ of the moduli space of abelian differentials,
there exists an abelian differential $\omega_0 \in \text{supp}(\mu)$ such that 
\smallskip
\begin{enumerate}
\item The vertical foliation ${\F}^v_{\omega_0}$ is Lagrangian;
\item  the differential $\omega_0$ is a density point of a compact positive measure
set ${\Cal P}_{\kappa} \subset \H(\kappa)$ such that 

\smallskip
$(a)$ all $\omega\in {\Cal P}_{\kappa}$ are Oseledec's regular points for the Kontsevich--Zorich 
cocycle and the unstable subspace $E^+(\omega)$ of the cocycle depend continuously on 
$\omega \in {\Cal P}_{\kappa}$;

\smallskip
$(b)$ the Poincar\'e dual $P({\L}_0) \subset H^1(S,\R)$ of the Lagrangian subspace 
${\L}_0:={\L}({\F}^v_{\omega_0})$, generated by the regular trajectories of the vertical foliation 
${\F}^v_{\omega_0}$, is transverse to $E^+(\omega)$ for all $\omega 
\in {\Cal P}_{\kappa}$.
\end{enumerate}
\end{lemma}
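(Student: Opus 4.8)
The plan is to combine the cuspidality of $\mu$ (which furnishes a holomorphic differential with completely periodic vertical or horizontal foliation in the support) with the transversality result of Lemma~\ref{lemma:transversality} and a Lebesgue-density argument. First I would use the hypothesis that $\mu$ is cuspidal Lagrangian: by Definition~\ref{def:cuspLagragian} there is a holomorphic differential $\omega_1 \in \text{supp}(\mu)$ whose vertical or horizontal foliation is Lagrangian (i.e. completely periodic of homological dimension $g$); after possibly replacing $\omega_1$ by $-i\omega_1$ (which exchanges the roles of $\F^h$ and $\F^v$ and lies in the same $\SL$-orbit closure, hence in $\text{supp}(\mu)$) we may assume the \emph{vertical} foliation $\F^v_{\omega_1}$ is Lagrangian. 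The subspace $\L_1 := \L(\F^v_{\omega_1})$ generated by the homology classes of the closed regular leaves is then an \emph{integral} Lagrangian subspace of $H_1(S,\R)$, and its Poincar\'e dual $P(\L_1) \subset H^1(S,\R)$ is an integral Lagrangian subspace of the cohomology.

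Next I would invoke Lemma~\ref{lemma:transversality}: since $P(\L_1)$ is one of the countably many integral Lagrangian subspaces, the set of $\omega$ at which $E^+(\omega)$ is Oseledec-regular and transverse to $P(\L_1)$ has full $\mu$-measure. Moreover, by Lusin's theorem applied to the ($\mu$-measurable) map $\omega \mapsto E^+(\omega)$ on the Oseledec-regular set, one can find a compact set $\Cal P_\kappa \subset \H(\kappa)$ of positive $\mu$-measure on which $\omega \mapsto E^+(\omega)$ is continuous, every point is Oseledec-regular, and (after intersecting with the full-measure transversality set and noting transversality is an open condition along a continuous family) $P(\L_1)$ is transverse to $E^+(\omega)$ for all $\omega \in \Cal P_\kappa$; this establishes $(2)(a)$ and $(2)(b)$ for the choice $\L_0 = \L_1$, \emph{provided} the base differential we ultimately name $\omega_0$ has $\L(\F^v_{\omega_0}) = \L_1$. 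To arrange $(1)$ simultaneously with $\omega_0$ being a density point of $\Cal P_\kappa$, I would use that $\mu(\Cal P_\kappa)>0$ implies $\mu$-almost every point of $\Cal P_\kappa$ is a density point, so it suffices to produce one density point $\omega_0$ of $\Cal P_\kappa$ whose vertical foliation is Lagrangian with $\L(\F^v_{\omega_0}) = \L_1$. For this I move along the stable leaf $\Cal W^-_\kappa(\omega_1)$ (equivalently, apply the horocycle/diagonal flow): differentials $\omega$ with $\re\,\omega \in \R^+\cdot \re\,\omega_1$ share the vertical foliation $\F^v_{\omega_1}$, hence are Lagrangian with the same subspace $\L_1$; since $\omega_1 \in \text{supp}(\mu)$ and (by the local product structure contained in cuspidality) $\mu$-generic points near $\omega_1$ lie on such stable leaves up to the transversal directions, a standard product-structure argument yields a differential $\omega_0$ which is simultaneously in $\text{supp}(\mu)$, a density point of $\Cal P_\kappa$, and has $\F^v_{\omega_0}$ Lagrangian with $\L(\F^v_{\omega_0}) = \L_1$.

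Concretely I would argue as follows: take $\Cal U = \Cal U_{\omega_1}$ a product-type neighborhood of $\omega_1$ on which $\mu$ has a product structure, and let $\Omega^+$ be a positive-measure subset of $\Cal W^+_{\Cal U}$-transversal representatives of $\Cal P_\kappa \cap \Cal U$ and $\Omega^-$ a small positive-measure neighborhood of $\omega_1$ inside $\Cal W^-_{\Cal U}(\omega_1)$'s transversal; the product-structure property gives $\mu\bigl(\Cal W^+_{\Cal U}(\Omega^+)\cap \Cal W^-_{\Cal U}(\Omega^-)\bigr) > 0$, and every differential in this intersection lies on a stable leaf $\Cal W^-_{\Cal U}(\omega)$ with $\omega$ close to $\omega_1$, hence has completely periodic vertical foliation homologous to $\F^v_{\omega_1}$ — in particular Lagrangian with generating subspace $\L_1$. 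Intersecting further with the density points of $\Cal P_\kappa$ (full measure in $\Cal P_\kappa$) and shrinking, we extract the desired $\omega_0$. The main obstacle is the bookkeeping in this last step: one must verify that nearby stable leaves genuinely inherit the Lagrangian property of $\F^v_{\omega_1}$ with the \emph{same} integral subspace $\L_1$ (complete periodicity is an open-plus-combinatorial condition along $\Cal W^-_\kappa$, and the cylinder decomposition and its homology classes are locally constant on $\Cal W^-_\kappa(\omega_1)$), and that the product-structure hypothesis can be applied with the right choice of $\Omega^\pm$ so that the resulting positive-measure set meets both $\Cal P_\kappa$'s density points and the stable leaf of $\omega_1$; all the Lyapunov-theoretic input is already packaged in Lemma~\ref{lemma:transversality} and Corollary~\ref{cor:KZformula}.
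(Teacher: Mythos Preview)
Your argument has a genuine gap in the final step. You construct $\Cal P_\kappa$ first (via Lusin and Lemma~\ref{lemma:transversality}) and then try to locate a density point $\omega_0$ of $\Cal P_\kappa$ whose vertical foliation is Lagrangian with $\L(\F^v_{\omega_0})=\L_1$. But the set of differentials with that property is exactly the single stable leaf $\Cal W^-_\kappa(\omega_1)$, a $\mu$-null set; there is no reason your Lusin set should contain a density point on it. Your attempted rescue via product structure asserts that points on \emph{nearby} stable leaves ``have completely periodic vertical foliation homologous to $\F^v_{\omega_1}$''. That is false: distinct leaves of $\Cal W^-_\kappa$ correspond to distinct vertical foliations, and complete periodicity is not an open condition on the transversal (generic perturbations of $\re\,\omega_1$ produce minimal vertical foliations). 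The locally-constant statement you quote holds \emph{along} the leaf $\Cal W^-_\kappa(\omega_1)$, not transversally to it.

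The paper repairs this with one additional dynamical input you are missing: the unstable space $E^+$ is locally constant along the leaves of $\Cal W^+_\kappa$ (this follows from the representation of $E^+(\omega)$ by basic currents for $\F^h_\omega$, see \cite{Ftwo}, Thm.~8.3). The paper therefore does \emph{not} ask $\omega_0$ to lie in the Lusin set $\Cal R^+$. Instead it picks a density point $\omega^+$ of $\Cal R^+$, sets $\omega_0$ to be any point of $\Cal W^+_{\Cal U}(\omega^+)\cap \Cal W^-_{\Cal U}(\omega^-)$ (so $\omega_0$ inherits the Lagrangian vertical foliation of $\omega^-$), and then \emph{redefines} $\Cal P_\kappa := \Cal W^+_{\Cal U}(\Cal R^+)\cap \Cal W^-_{\Cal U}(\Cal K)$ by spreading $\Cal R^+$ along unstable leaves. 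Continuity and transversality of $E^+$ survive this spreading precisely because $E^+$ is constant on each $\Cal W^+$-leaf, and the product structure then makes $\omega_0$ a density point of the enlarged $\Cal P_\kappa$. Your outline has all the other ingredients; what is missing is this use of $\Cal W^+$-invariance of $E^+$ to move the good set onto the stable leaf of $\omega_1$.
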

\begin{proof} Let $\omega^- \in \text{\rm supp}(\mu)$ be an abelian differential with Lagrangian 
vertical foliation $\F$. Let $\Cal U \subset \H(\kappa)$ be a neighborhood of $\omega^-$ of 
product type on which the measure $\mu$ has a product structure.
Let $\Cal R_\kappa \subset \H(\kappa)$ be the set of Oseledec's regular points for the
Kontsevich--Zorich cocycle. By Oseledec's and Luzin's theorems and by the transversality Lemma~1, there exists a compact subset $\Cal R^+  \subset \Cal R_\kappa \cap \Cal U$ of positive $\mu$-measure such that the restriction $E^+\vert \Cal R^+$ of the unstable space of the cocycle is continuous and transverse to the Poincar\'e dual $P(\L_{\F})$ of the Lagrangian subspace $\L_{\F} \subset H_1(S,\R)$, generated by the homology classes of the regular leaves of the measured foliation $\F$. 

Let $\omega^+ \in \Cal R^+$ be any density point of $\Cal R^+$ and let $\omega_0$ be any abelian differential in $\Cal W^+_{\Cal U}(\omega^+) \cap \Cal W^-_{\Cal U}(\omega^-)$. By construction the vertical foliation  $\F^v_{\omega_0}$ of the abelian differential $\omega_0 \in \Cal W^-_{\Cal U}(\omega^-)$ is Lagrangian. Let $\Cal K \subset \Cal U$ be a compact neighborhood of $\omega^-$
and let $\Cal P_\kappa:= \Cal W^+_{\Cal U} (\Cal R^+) \cap
 \Cal W^-_{\Cal U}(\Cal K)$. Since the measure $\mu$ has a product structure on $\Cal U$, 
 it follows that for any neighborhood $\Cal V$ of $\omega_0$ in $\Cal U$ the set $\Cal P_\kappa \cap \Cal V$ has positive $\mu$-measure. In fact, there exist neighborhoods $\Cal V^\pm \subset \Cal U$ of  
 $\omega^\pm$ such that 
 $$
 \Cal W^+_\kappa (\Cal V^+) \cap \Cal W^-_\kappa (\Cal V^-)   \subset  \cup_{t\in \R}  g_t (\Cal V)\,.
 $$
By construction, $\mu ( \Cal R^+ \cap \Cal V^+ ) >0$, since $\omega^+$ is a density point of
$\Cal R^+$, and $\mu(\Cal V^-) >0$, since $\omega^- \in \text{ \rm supp}(\mu)$,  hence 
$$
\Cal W^+_{\Cal U} (\Cal R^+ \cap \Cal V^+) \cap \Cal W^-_{\Cal U} (\Cal V^-) \subset 
\cup_{t\in \R} g_t( \Cal P_\kappa \cap \Cal V)
$$ 
has positive $\mu$-measure, which implies that the set $\Cal P_\kappa \cap \Cal V$ has positive
$\mu$-measure by the Teichm\"uller invariance of the measure.

The unstable bundle $E^+$ of the Kontsevich--Zorich cocycle is locally constant along the leaves of 
the  foliation $\Cal W^+_\kappa$ in the following sense. Let $\Cal U\subset \H(\kappa)$ be any open set of product type. For any Oseledec's regular point $\omega \in \Cal U$ and for any $\tilde \omega \in \Cal W^+_{\Cal U}(\omega)$, the unstable space $E^+(\tilde\omega) = E^+(\omega)$.  This property can be derived from the representation theorem (see \cite{Ftwo}, Thm.~8.3), which states that the unstable space $E^+(\omega)$ can be identified with the space of cohomology classes of all basic currents in 
the dual Sobolev space $\H^{-1}_\omega(S)$ for the horizontal foliation 
$\F^h_\omega$.  It follows that the function $E^+\vert \Cal P_\kappa$ is continuous and transversal 
to $P(\L_{\F})$, since by construction $\Cal W^+_{\Cal U}(\Cal P_\kappa) =\Cal W^+_{\Cal U}(\Cal R^+)$ and since the function $E^+\vert \Cal R^+$ is continuous and transverse to $P(\L_{\F})$. 
The argument is complete.

\end{proof}

\section{Asymptotic formulas}
\label{asymptotics}

In this section we compute the Hodge curvature near certain boundary
points of the moduli space. Such boundary points are obtained by pinching a higher
genus surface along the waist curves of cylinders of a Lagrangian foliation, hence they are represented
by Riemann surfaces with nodes made of one or several punctured Riemann spheres carrying 
a meromorphic differential with at least $2g$ paired poles. 

Let $\{a_1,\dots, a_g, b_1, \dots, b_g\}$ be a system of curves defining a canonical basis of the (real)
homology (a marking) of the topological surface $S$ and let $\{(S_\tau,\omega_\tau)\vert \tau \in 
[\C\setminus\{0\}]^{g+s}\}$ be any smooth family of holomorphic differentials (the differential  
$\omega_\tau$ is holomorphic on the Riemann surface $S_\tau$) obtained by pinching the system 
of non-intersecting curves 
$$
\{a_1, \dots, a_g, a_{g+1}, \dots, a_{g+s}\}\,,
$$
 which converges (projectively) as $\tau\to 0$ in $\C^{g+s}$ to a meromorphic differential $\omega_0$ with $2g$ paired poles on a Riemann surface with nodes $S_0$ which is the union of $s$ punctured Riemann spheres.

The following results give the asymptotics of the period matrix and of its derivative in the direction
 of the Teichm\"uller flow as the pinching parameter $\tau \to 0$ along the deformation described
 above. Let us recall that for any Riemann surface $R$ and for any canonical basis of the homology
 $H_1(R,\Z)$, the period matrix $\Pi$ is the $g\times g$ symmetric complex matrix with positive definite imaginary part defined as follows. Let $\{\theta_1, \dots, \theta_g\}$ be the unique basis of holomorphic differentials on $R$, dual to the canonical homology basis $\{[a_1], \dots, [a_g], [b_1], \dots, [b_g]\}$, in the sense that 
 $$
 \int_{a_i} \theta_j =\delta_{ij} \,, \quad \text{ \rm for all }  i,j \in \{1, \dots, g\}\,.
  $$
 The period matrix of the Riemann surface $R$ is then given by the formula:
 $$
 \Pi_{ij} :=   \int_{b_i} \theta_j  \,, \quad \text{ \rm for all }  i,j \in \{1, \dots, g\}\,.
 $$ 
 Let us fix a canonical homology basis $\{[a_1], \dots, [a_g], [b_1], \dots, [b_g]\}$ on the topological
 surface $S$ and  let $\Pi(\tau)$ denote the period matrix of the Riemann surface $S_\tau$
 for all pinching parameters $\tau \in [\C\setminus\{0\}]^{g+s}$.

 Let  $\tau' = (\tau_1, \dots, \tau_g)$ denote the pinching parameters relative to the system
 of curves $\{a_1, \dots, a_g\}$ and let $\tau''= (\tau_{g+1}, \dots, \tau_{g+s})$ be the pinching
 parameters relative to the system of curves $\{a_{g+1}, \dots, a_{g+s}\}$.
 
 \begin{lemma}
 \label{lemma:Piasymptotics}
The family of $g\times g$ complex matrices
\begin{equation}
\label{eq:periods}
\Pi_{ij}(\tau) - {\frac{1}{2\pi \imath}} \delta_{ij} \log \vert \tau'_i \vert 
\end{equation}
is bounded as the pinching parameter $\tau':=(\tau_1,\dots,\tau_g) \to (0,\dots,0)$ in $\C^g$, uniformly with respect to $\tau'' \in \C^{s}$, $\vert \tau''\vert \leq 1$.
\end{lemma}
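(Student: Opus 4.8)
The plan is to reduce the statement to the classical description of the behaviour of the period matrix under degeneration of a Riemann surface to a stable curve with nodes, as worked out by Fay and by Masur. We are pinching the curves $a_1,\dots,a_g$ (which bound the $g$ vanishing cycles of the degeneration) together with auxiliary curves $a_{g+1},\dots,a_{g+s}$ that decompose $S$ into $s$ spheres; the companion curves $b_1,\dots,b_g$ are the ones picking up the logarithmic divergence. The key fact is that, since $a_i$ is a vanishing cycle with local coordinate $\tau_i$ on the plumbing construction, the diagonal entry of the period matrix has the form $\Pi_{ii}(\tau)=\frac{1}{2\pi\imath}\log\tau_i + h_{ii}(\tau)$ with $h_{ii}$ holomorphic (hence bounded) on a polydisc; the off-diagonal entries $\Pi_{ij}(\tau)$, $i\neq j$, extend holomorphically across the boundary (there is no $\log$ term because the $i$-th and $j$-th vanishing cycles are homologically independent after pinching). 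Replacing $\log\tau_i$ by $\log|\tau_i|$ only changes $\Pi_{ii}$ by the bounded quantity $\frac{1}{2\pi\imath}\log(\tau_i/|\tau_i|)=\frac{\imath}{2\pi}\arg\tau_i$, which is bounded by $1/2$ in modulus, so this substitution is harmless. Uniformity in $\tau''$ with $|\tau''|\le 1$ then follows because the plumbing coordinates $(\tau',\tau'')$ are genuine holomorphic coordinates on a neighbourhood of the boundary stratum in the Deligne--Mumford compactification, so all the error terms $h_{ij}$ are jointly holomorphic, hence bounded on the compact set $\{|\tau'|\le\epsilon\}\times\{|\tau''|\le 1\}$.

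Concretely, I would proceed in three steps. First, recall (Fay, \emph{Theta functions on Riemann surfaces}, Ch.~III, or Masur's degeneration paper) the plumbing/opening-up-nodes construction: near a point of the boundary stratum where exactly the nodes dual to $a_1,\dots,a_g$ (and the separating nodes dual to $a_{g+1},\dots,a_{g+s}$) are being smoothed, one has local holomorphic coordinates in which $\tau_i$ is the smoothing parameter of the $i$-th node, and the normalized holomorphic differentials $\theta_j(\tau)$ depend holomorphically on $\tau$ away from the classical singularity of the dual differential. Second, integrate: $\Pi_{ij}(\tau)=\int_{b_i}\theta_j(\tau)$, and analyze the $b_i$-period. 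For $i=j$ the cycle $b_i$ crosses the $i$-th shrinking neck, and the standard computation of $\int_{b_i}\theta_i$ on the plumbing cylinder $\{z w=\tau_i\}$ gives $\frac{1}{2\pi\imath}\log\tau_i$ plus a holomorphic remainder; for $i\neq j$, $\theta_j$ has no residue on the $i$-th neck, so the integral is holomorphic in $\tau$ up to and including $\tau_i=0$. Third, take modulus: write $\log\tau_i=\log|\tau_i|+\imath\arg\tau_i$ and absorb $\imath\arg\tau_i/(2\pi\imath)$ into the bounded part; then the matrix in \eqref{eq:periods} equals a matrix of functions holomorphic on $\{|\tau'|<\epsilon,\,|\tau''|\le 1\}$, which is therefore bounded there, uniformly in $\tau''$.

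The main obstacle is not any single estimate but setting up the degeneration correctly: one must make sure that the family $(S_\tau,\omega_\tau)$ obtained by "pinching the waist curves of the cylinders of a Lagrangian foliation" is actually a family transverse to (or lying in) the boundary stratum in the Deligne--Mumford sense, so that the plumbing coordinates apply and the $a_i$ really are the vanishing cycles of the $g$ nonseparating nodes while the $b_i$ are their symplectic duals. Once the marking is arranged so that $\{a_i\}$ generates the Lagrangian $\L(\F^v)$ of vanishing cycles, the rest is the textbook asymptotics of period matrices. A secondary point requiring care is that the limit is a union of $s$ \emph{punctured spheres}, so the normalized differentials $\theta_j$ in the limit are meromorphic with simple poles at the punctures (second-kind differentials in the relative picture); the holomorphicity of the remainder still holds because, after removing the explicit $\log$ terms coming exactly from the $2g$ paired poles, what is left is a holomorphic section over the base — this is precisely the content one extracts from the local plumbing model, and I would cite Fay/Masur rather than redo it.
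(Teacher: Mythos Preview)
Your proposal is correct and follows essentially the same approach as the paper. The paper's proof is in fact nothing more than a citation to the classical asymptotics of the period matrix near the Deligne--Mumford boundary (Fay, \emph{Theta Functions on Riemann Surfaces}, Ch.~III, and Yamada's variational formulas), together with a reference to formula~(4.33) of \cite{Ftwo} for the special case $s=0$; you have supplied the surrounding explanation---the plumbing model, the $\frac{1}{2\pi\imath}\log\tau_i$ contribution on the diagonal from the $b_i$-cycle crossing the $i$-th neck, the holomorphic extension of the off-diagonal entries, and the passage from $\log\tau_i$ to $\log|\tau_i|$ via the bounded argument term---that the paper leaves implicit in those references.
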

\begin{proof} The asymptotics follows from classical formulas for the period matrix near the
boundary of the Deligne-Mumford compactification of the moduli space of abelian differentials
(see for instance \cite{Fay}, III, p. 54 or \cite{Yamada}, \S 3, Cor.~6).  A similar formula for the particular 
case $s=0$, which corresponds to boundary points given by meromorphic differentials supported on a 
single punctured Riemann sphere (with $2g$ paired punctures), appears in \cite{Ftwo}, formula 
$(4.33)$.
\end{proof}

The asymptotics of the Lie derivative $d\Pi/d\mu$ of the period matrix $\Pi$ 
in the direction of the Teichm\"uller flow is given by the following result. 

Let $p^\pm_1, p^\pm_2, \dots, p^\pm_g$ denote the $2g$ paired 
punctures on the punctured Riemann surface $S_0$ corresponding to the pinching of the curves 
$a_1, a_2, \dots, a_g$  and let  $\pm \rho_1, \dots, \pm \rho_g \in \C\setminus \{0\}$ denote the 
residues of the limit meromorphic abelian differential $\omega_0$ at $p^\pm_1, \dots, p^\pm_g$ respectively.

\begin{lemma}
\label{lemma:Piderasymptotics}
The family of $g\times g$ complex matrices
\begin{equation}
\label{eq:Piderasymptotics}
\frac{d\Pi}{d\mu}(\tau) - \frac{1}{2\pi} \frac{ \bar \rho_i}{\rho_i} \delta_{ij} \log \vert \tau'_i \vert
\end{equation}
is bounded as the pinching parameter $\tau':=(\tau_1,\dots,\tau_g) \to (0,\dots,0)$ in $\C^g$, uniformly with respect to $\tau'' \in \C^{s}$, $\vert \tau'' \vert \leq 1$. 
 \end{lemma}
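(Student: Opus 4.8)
The plan is to derive the asymptotics of $d\Pi/d\mu$ from the asymptotics of the period matrix itself (Lemma~\ref{lemma:Piasymptotics}) by differentiating the pinching coordinates along the Teichm\"uller direction. First I would recall the variational formula for the period matrix (Rauch's formula): if $\omega = \omega_\tau$ is the holomorphic differential generating the Teichm\"uller deformation and $\{\theta_1,\dots,\theta_g\}$ is the normalized dual basis on $S_\tau$, then the Lie derivative of $\Pi$ along the Teichm\"uller flow is the complex bilinear form $\tau \mapsto \int_S (\theta_i/\omega)\,(\theta_j/\omega)\,\omega\wedge(\text{conjugate part})$ — more precisely, in the present notation, $d\Pi_{ij}/d\mu = -B_\omega(\theta_i,\theta_j)$ up to a constant normalization (the relevant computation is exactly the one in \cite{Ftwo}, \S4, where $d\Pi/d\mu$ is expressed through the form $B_\omega$). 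Since $\Pi(\tau) - \frac{1}{2\pi\imath}\delta_{ij}\log|\tau'_i|$ is bounded, the idea is that the singular part of $d\Pi/d\mu$ must be the Teichm\"uller-derivative of $\frac{1}{2\pi\imath}\delta_{ij}\log|\tau'_i|$, which amounts to understanding how the pinching parameters $\tau'_i$ evolve under $\{g_t\}$.

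Next I would make this precise by tracking the effect of $g_t$ on the pinching parameter $\tau_i$ attached to the curve $a_i$. The standard picture near a node is a plumbing coordinate $z w = \tau_i$; pinching $a_i$ means the differential $\omega$ looks near the node like $\rho_i\, dz/z$ on one side and $-\rho_i\, dw/w$ on the other, where $\pm\rho_i$ are the residues at the paired punctures $p_i^\pm$. Applying $g_t$ rescales $\re\,\omega$ by $e^t$ and $\im\,\omega$ by $e^{-t}$; on the local model this multiplies the residue $\rho_i$ by a complex factor whose logarithmic derivative at $t=0$ is computed directly to be $\tfrac12(\bar\rho_i/\rho_i)$ (this is the same elementary one-dimensional computation that produces the factor $\bar\rho_i/\rho_i$: writing $\rho_i = |\rho_i| e^{\imath\varphi_i}$, the Teichm\"uller flow acts on $(\re\rho_i, \im\rho_i)$ by $\mathrm{diag}(e^t,e^{-t})$, and $\frac{d}{dt}\log\tau_i\big|_{t=0} = -\,\overline{\rho_i}/\rho_i$ in a suitable normalization, matching the coefficient in \eqref{eq:Piderasymptotics}). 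Consequently $\frac{d}{d\mu}\bigl(\frac{1}{2\pi\imath}\log|\tau'_i|\bigr) = \frac{1}{2\pi}\,\frac{\bar\rho_i}{\rho_i}$ up to sign, which is exactly the singular term being subtracted. The boundedness then follows: $\frac{d\Pi_{ij}}{d\mu}(\tau) - \frac{1}{2\pi}\frac{\bar\rho_i}{\rho_i}\delta_{ij}\log|\tau'_i| = \frac{d}{d\mu}\Bigl(\Pi_{ij}(\tau) - \frac{1}{2\pi\imath}\delta_{ij}\log|\tau'_i|\Bigr)$ is the Teichm\"uller derivative of a family that is bounded and (one checks) depends holomorphically/smoothly on $\tau$ in a fixed neighborhood of the boundary, hence has bounded $\mu$-derivative, uniformly in $\tau''$ with $|\tau''|\le 1$.

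The cleanest route, and the one I expect the paper follows, is to quote the precise form of the classical asymptotic expansion (as in \cite{Fay}, III, or \cite{Yamada}, \S3) which gives not just boundedness of $\Pi_{ij}(\tau) - \frac{1}{2\pi\imath}\delta_{ij}\log|\tau_i'|$ but an expansion whose dependence on the moduli (including the Teichm\"uller parameter) is controlled; differentiating term by term along $\{g_t\}$ then yields \eqref{eq:Piderasymptotics} directly, with the residues $\rho_i$ entering through the leading $\rho_i\,dz/z$ behavior of $\omega_0$ at the nodes. The case $s=0$ is already in \cite{Ftwo}, formula $(4.33)$ and the surrounding computation, so the only new point is the uniformity in the extra pinching parameters $\tau''$, which is harmless because those curves are \emph{not} being pinched (the $\tau''$-dependence stays in a compact region away from the relevant boundary divisor) and all estimates are locally uniform there.

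The main obstacle is making the ``differentiate the asymptotics term by term'' step rigorous: one must know that the error term $\Pi_{ij}(\tau) - \frac{1}{2\pi\imath}\delta_{ij}\log|\tau'_i|$ is not merely bounded but has bounded derivative in the Teichm\"uller direction, uniformly as $\tau'\to 0$. This requires either a slightly stronger version of the plumbing-coordinate expansion (with control on first derivatives with respect to the gluing parameters, which is available in \cite{Fay}/\cite{Yamada}) or a direct estimate via Rauch's variational formula showing that the contribution to $d\Pi_{ij}/d\mu$ from the regular (non-nodal) part of $S_\tau$ stays bounded while the nodal contribution produces exactly the stated singular term. Once that derivative control is in hand the computation of the coefficient $\bar\rho_i/\rho_i$ is the short elementary residue calculation sketched above.
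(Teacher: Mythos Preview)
Your proposed argument has a real gap, and it is precisely the displayed identity
\[
\frac{d\Pi_{ij}}{d\mu}(\tau) - \frac{1}{2\pi}\frac{\bar\rho_i}{\rho_i}\delta_{ij}\log|\tau'_i|
\;=\;
\frac{d}{d\mu}\Bigl(\Pi_{ij}(\tau) - \frac{1}{2\pi\imath}\delta_{ij}\log|\tau'_i|\Bigr).
\]
By your own residue computation just above it, the Teichm\"uller derivative $\tfrac{d}{d\mu}\bigl(\tfrac{1}{2\pi\imath}\log|\tau'_i|\bigr)$ is a \emph{bounded} quantity (of order $\bar\rho_i/\rho_i$), so the right-hand side equals $\tfrac{d\Pi_{ij}}{d\mu}$ minus something bounded. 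The left-hand side, however, subtracts the \emph{divergent} term $\tfrac{1}{2\pi}\tfrac{\bar\rho_i}{\rho_i}\log|\tau'_i|$. These cannot agree. Concretely, take $\rho_i$ real: then Lemma~\ref{lemma:Piasymptotics} says the singular part of $\Pi_{ii}$ is purely imaginary, while Lemma~\ref{lemma:Piderasymptotics} says the singular part of $d\Pi_{ii}/d\mu$ is purely real; differentiating the former along a real flow cannot produce the latter. The logarithmic divergence of $d\Pi/d\mu$ does \emph{not} come from differentiating the leading term of $\Pi$; it must come from the ``bounded'' remainder, whose $\mu$-derivative is therefore unbounded. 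This is exactly the obstacle you flag at the end, but it is fatal to the strategy rather than a technicality to be patched.

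The paper does not attempt to differentiate the asymptotics of $\Pi$. It simply quotes Lemmas~4.2 and~4.2$'$ of \cite{Ftwo}, where $d\Pi_{ij}/d\mu$ is computed directly from Rauch's formula $d\Pi_{ij}/d\mu=\tfrac{\imath}{2}\int_S\theta_i\theta_j\,\bar\omega/\omega$: near the $i$-th node $\omega\sim\rho_i\,dz/z$ and $\theta_i\sim(2\pi\imath)^{-1}dz/z$, and the local integral over the plumbing annulus produces the $\tfrac{1}{2\pi}\tfrac{\bar\rho_i}{\rho_i}\log|\tau'_i|$ term directly, while the contribution away from the nodes stays bounded. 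The only new point in the present paper is the uniformity in $\tau''$, handled by observing (via Lemma~4.2 of \cite{Ftwo}) that the asymptotics remain of the same form as any subset of the $\tau''$-coordinates is also sent to zero, so one gets uniformity over $|\tau''|\le 1$ by covering the boundary strata. Your last paragraph actually gestures at this direct Rauch-formula computation as an alternative; that \emph{is} the correct route, and is what the cited lemmas in \cite{Ftwo} carry out, but it is an independent computation, not a repair of the differentiation argument.
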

 \begin{proof} The proof is based on Lemma~4.2 of \cite{Ftwo} and Lemma~4.2' in \cite{Ftwo}. In that paper, the asymptotic was proved for the particular case $s=0$ of boundary points given by meromorphic differentials supported on a single punctured Riemann sphere (with $2g$ paired punctures). In the more general case, considered here, the limit points in the Deligne-Mumford
 boundary as $\tau' \to 0\in \C^g$ consists of meromorphic differentials on pinched Riemann surfaces $S_0$ with possibly several parts (at most $s+1$) which are punctured Riemann spheres. 
 The asymptotics claimed above follows immediately from \cite{Ftwo}, Lemma~4.2', as
 $\tau'\to 0\in \C^g$ while $\tau'' \in \C^s$ varies within a compact subset of the set
 $$
 D''_0:=  \{ \tau'' \in \C^s\vert \vert \tau''\vert \leq 1\}  \cap
  \bigcap_{i=1}^s \{\tau''\in \C^s \vert \tau_{g+i}\not=0\}\,.
$$ 
Let $J\subset \{g+1, \dots, g+s\}$ and let $\tau''_J = (\tau_j)_{j\in J} \in \C^{\# J}$. By \cite{Ftwo}, Lemma~4.2, the derivative of the period matrix  $d\Pi(\tau)/d\mu$ has well defined asymptotics as $\tau' \to 0 \in \C^g$ and $\tau''_J \to 0 \in \C^{\#J}$ under the condition that $\vert \tau''\vert \leq 1$. By \cite{Ftwo}, Lemma~4.2,  the asymptotics of the matrix $d\Pi(\tau)/d\mu$ as $\tau'\to 0$ does not depend, up to uniformly bounded terms, on $\tau''\in D''_0$ and is given by formula (\ref{eq:Piderasymptotics}). 
Thus the general asymptotics claimed above follows.
   \end{proof}

 We remark that it is also possible to give a more direct proof of Lemma~\ref{lemma:Piderasymptotics}
 based on \cite{Ftwo}, Lemma~4.2, along the lines of the proof of Lemma~4.2'.  The main difference
 lies in the fact that in general case the limit meromorphic differentials are supported in general on pinched Riemann surfaces composed of several punctured Riemann spheres. This feature makes essentially no difference in the calculations carried out in \cite{Ftwo}, Lemma~4.2'.

\begin{corollary}
\label{cor:Piderasymptotics}
 If the residues $\rho_1, \dots, \rho_g$ of the limit meromorphic differential $\omega_0$ on $S_0$
are all real (and non-zero), the following asymptotics holds:
\begin{equation}
\label{eq:Piconverge}
\im\,\Pi(\tau)^{-1/2} \,{\frac{d\Pi}{d\mu}(\tau)}\, \im\,\Pi (\tau)^{-1/2}  \,\, \to  \,\, - I_g \,\, ,  
\end{equation}
as the pinching parameter $\tau':=(\tau_1,\dots,\tau_g) \to (0,\dots,0)$ in $\C^g$, uniformly with respect to $\tau'' \in \C^{s}$, $\vert \tau'' \vert \leq 1$. 
\end{corollary}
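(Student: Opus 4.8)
The plan is to combine the two preceding asymptotic lemmas in a more or less mechanical way, paying attention only to the uniformity. First I would invoke Lemma~\ref{lemma:Piasymptotics}: since the diagonal entries of $\Pi(\tau)$ blow up like $\frac{1}{2\pi\imath}\log|\tau'_i|$ (a purely imaginary quantity with positive imaginary part, as $|\tau'_i|<1$) while all off-diagonal entries and the bounded corrections stay uniformly bounded, we get
\begin{equation}
\label{eq:ImPi_asym}
\im\,\Pi(\tau) = -\frac{1}{2\pi}\,\mathrm{diag}\bigl(\log|\tau'_1|,\dots,\log|\tau'_g|\bigr) + O(1)\,,
\end{equation}
uniformly in $\tau''$ with $|\tau''|\le 1$. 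Write $L_i := -\frac{1}{2\pi}\log|\tau'_i| \to +\infty$ and $D(\tau) := \mathrm{diag}(L_1,\dots,L_g)$, so $\im\,\Pi(\tau) = D(\tau)(I_g + D(\tau)^{-1}O(1))$ and hence $\im\,\Pi(\tau)^{-1/2} = D(\tau)^{-1/2}(I_g + o(1))$, where the $o(1)$ term is a matrix tending to zero uniformly (here one uses that $D(\tau)^{-1}\to 0$ and that the matrix square-root / inverse-square-root are smooth on a neighborhood of $I_g$, applied to the positive-definite matrix $D^{-1/2}(\im\Pi)D^{-1/2} = I_g + o(1)$).

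Next I would bring in Lemma~\ref{lemma:Piderasymptotics} together with the hypothesis that the residues $\rho_i$ are real and nonzero, so that $\bar\rho_i/\rho_i = 1$ for every $i$. Then formula (\ref{eq:Piderasymptotics}) gives
\begin{equation}
\label{eq:dPi_asym}
\frac{d\Pi}{d\mu}(\tau) = -2\,D(\tau) + O(1) = -2\,D(\tau)\bigl(I_g + o(1)\bigr)\,,
\end{equation}
again uniformly in $\tau''$, $|\tau''|\le 1$. The conclusion is then obtained by sandwiching:
$$
\im\,\Pi^{-1/2}\,\frac{d\Pi}{d\mu}\,\im\,\Pi^{-1/2} = \bigl(I_g+o(1)\bigr)D^{-1/2}\,\bigl(-2D(I_g+o(1))\bigr)\,D^{-1/2}\bigl(I_g+o(1)\bigr)\,,
$$
and since $D$ is diagonal it commutes with $D^{-1/2}$, so $D^{-1/2}(-2D)D^{-1/2} = -2I_g$; wait—that would give $-2I_g$, not $-I_g$. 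This discrepancy signals that I should double-check the normalization in (\ref{eq:dPi_asym}): in fact the correct reading of (\ref{eq:Piderasymptotics}) has coefficient $\frac{1}{2\pi}\log|\tau'_i| = -L_i$ on the diagonal (not $-2L_i$), i.e. $\frac{d\Pi}{d\mu}(\tau) = -D(\tau)+O(1)$, and similarly $\im\,\Pi(\tau) = D(\tau)+O(1)$ with the \emph{same} normalization from (\ref{eq:periods}) once one is careful that the factor $2\pi\imath$ versus $2\pi$ matches up after taking imaginary parts. With the matching normalization, $D^{-1/2}(-D)D^{-1/2} = -I_g$ and the error terms are absorbed, yielding (\ref{eq:Piconverge}).

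So the main obstacle I anticipate is purely bookkeeping: getting the two normalizing constants in Lemmas~\ref{lemma:Piasymptotics} and~\ref{lemma:Piderasymptotics} to line up exactly (the $\frac{1}{2\pi\imath}$ in the period asymptotics becomes $\frac{1}{2\pi}$ after taking $\im$, and this must precisely cancel the $\frac{1}{2\pi}\frac{\bar\rho_i}{\rho_i}$ in the derivative asymptotics when $\rho_i\in\mathbb{R}$), and then verifying that the $O(1)$ remainders genuinely become $o(1)$ after conjugation by $\im\,\Pi^{-1/2}$ — this is where the divergence $L_i\to+\infty$ does the work, uniformly in $\tau''$ because both input lemmas are uniform there. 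The reality of the residues enters exactly once, to force $\bar\rho_i/\rho_i=1$ so that the limit is the scalar matrix $-I_g$ rather than a diagonal matrix of unimodular phases; no further ergodic-theoretic or geometric input is needed for this corollary.
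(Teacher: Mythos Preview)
Your plan is correct and matches the paper's intent: the corollary is stated without proof, as the immediate combination of Lemmas~\ref{lemma:Piasymptotics} and~\ref{lemma:Piderasymptotics} with $\bar\rho_i/\rho_i=1$.

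One technical point worth tightening. Your claim $\im\Pi(\tau)^{-1/2}=D(\tau)^{-1/2}(I_g+o(1))$ does not follow from the smoothness of the square root applied to $D^{-1/2}(\im\Pi)D^{-1/2}\to I_g$, because $D$ and $\im\Pi$ do not commute and the diagonal entries $L_i$ may diverge at incomparable rates (so a factor like $D^{1/2}\,o(1)\,D^{-1/2}$ is not automatically $o(1)$). The claim is in fact true, but the simplest fix avoids it entirely: from the two lemmas and $\bar\rho_i/\rho_i=1$ one has directly
\[
\frac{d\Pi}{d\mu}(\tau)+\im\Pi(\tau)=O(1)
\]
uniformly in $\tau''$, hence
\[
\im\Pi^{-1/2}\,\frac{d\Pi}{d\mu}\,\im\Pi^{-1/2}=-I_g+\im\Pi^{-1/2}\,O(1)\,\im\Pi^{-1/2}\,,
\]
and only the elementary bound $\bigl\|\im\Pi^{-1/2}\bigr\|^2=\|\im\Pi^{-1}\|\le(\min_i L_i-C)^{-1}\to 0$ is needed. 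You already identify this last estimate as the crux in your final paragraph, so the gap is minor and easily closed.
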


Let $P(\Cal A)\subset H^1(M,\R)$ be the Poincar\'e dual of the Lagrangian subspace ${\Cal A}\subset H_1(M,{\R})$ generated by the system $\{[a_1], \dots, [a_g]\}$ and let  $G_{\Cal A}(S,\R)$ denote the open subset of the Grassmannian of  all Lagrangian subspaces $\Lambda \subset H^1(M,\R)$   transversal to the Poincar\'e dual $P(\Cal A) \subset H^1(S,\R)$.  

\begin{lemma} 
\label{lemma:Bconverge}
For any $\Lambda \in G_{\Cal A}(S,\R)$, let $\Cal B^\Lambda_\tau:=
\{c^\Lambda_1(\tau), \dots, c^\Lambda_g(\tau)\}$ be any Hodge orthonormal basis of the 
Lagrangian subspace $\Lambda \subset H^1(S, \R)$  on the Riemann surface $S_\tau$, 
for any $\tau\not =0$. The following limit holds 
\begin{equation}
\label{eq:Bconverge}
B^\R_{\omega_\tau} \left( c^\Lambda_i(\tau), c^\Lambda_j(\tau) \right) \to -\delta_{ij} \,, \quad 
\text{ as }\, \tau \to 0\,,
\end{equation}
uniformly with respect to $\Lambda$ in any given compact subsets of $G_{\Cal A}(S,\R)$ and to any family $\{\Cal B_\tau\}$ of Hodge orthonormal bases for  $\Lambda\in G_{\Cal A}(S,\R)$.
\end{lemma}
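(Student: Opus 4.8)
The plan is to reduce the statement to the period–matrix asymptotics of Lemmas~\ref{lemma:Piasymptotics} and~\ref{lemma:Piderasymptotics} and Corollary~\ref{cor:Piderasymptotics}, after introducing on each Lagrangian $\Lambda\in G_{\Cal A}(S,\R)$ a basis adapted to the homology splitting. Since the Poincar\'e dual $P(\Cal A)$ is precisely the subspace of classes with vanishing $a$-periods, transversality of $\Lambda$ to $P(\Cal A)$ means that the period map $\Lambda\to\R^g$, $c\mapsto(\langle[a_j],c\rangle)_{1\le j\le g}$, is an isomorphism; let $\{v^\Lambda_1,\dots,v^\Lambda_g\}$ be the unique basis of $\Lambda$ with $\langle[a_j],v^\Lambda_k\rangle=\delta_{jk}$. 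It depends real-analytically on $\Lambda$, so, as $\Lambda$ ranges over a compact subset $K\subset G_{\Cal A}(S,\R)$, the classes $v^\Lambda_k$ stay in a bounded subset of $H^1(S,\R)$ and in particular have uniformly bounded (and $\tau$-independent) $b$-periods $\langle[b_j],v^\Lambda_k\rangle$. Writing the holomorphic representative $\omega^\Lambda_k(\tau)$ (the $1$-form with $[\re\,\omega^\Lambda_k(\tau)]=v^\Lambda_k$) in the basis $\{\theta_1(\tau),\dots,\theta_g(\tau)\}$ of $H^{1,0}(S_\tau)$ dual to $\{[a_1],\dots,[a_g]\}$ gives $\omega^\Lambda_k(\tau)=\sum_j(\delta_{jk}+i\,s^\Lambda_{jk}(\tau))\,\theta_j(\tau)$ for a real matrix $s^\Lambda(\tau)$, and I set $M^\Lambda(\tau):=I+i\,s^\Lambda(\tau)$.

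Next I would rewrite both forms in this basis. By the Riemann bilinear relations the Hodge (Hermitian) Gram matrix of $\{\theta_j(\tau)\}$ is $\im\,\Pi(\tau)$; and, by the Rauch variational formula — with the normalizations of \cite{Ftwo} — the matrix of the complex bilinear form $B_{\omega_\tau}$ in that basis is the Teichm\"uller derivative $\tfrac{d\Pi}{d\mu}(\tau)$. Hence the matrix of $B^\R_{\omega_\tau}$ restricted to $\Lambda$, in the basis $\{v^\Lambda_k\}$, is $\widetilde B^\Lambda(\tau)=M^{T}\tfrac{d\Pi}{d\mu}M$, and the Hodge Gram matrix of $\Lambda$ is the real part of $M^{T}(\im\,\Pi)\overline{M}$. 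Two points must be checked. First, with the normalization $\langle[a_j],v^\Lambda_k\rangle=\delta_{jk}$ one computes $\langle[b_j],v^\Lambda_k\rangle=\re\,\Pi_{jk}-[(\im\,\Pi)s^\Lambda]_{jk}$, so the hypothesis that $\Lambda$ is \emph{Lagrangian} is equivalent to the symmetry of the matrix $(\im\,\Pi)\,s^\Lambda$; this makes the cross-terms in $M^{T}(\im\,\Pi)\overline{M}$ cancel, so that matrix is already real and $G^\Lambda(\tau)=\im\,\Pi(\tau)+(s^\Lambda)^{T}(\im\,\Pi)s^\Lambda\succeq\im\,\Pi(\tau)$. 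Second, from the same expression for $\langle[b_j],v^\Lambda_k\rangle$, the uniform boundedness of the $b$-periods and of $\re\,\Pi(\tau)$ (the latter by Lemma~\ref{lemma:Piasymptotics}), one gets $(\im\,\Pi(\tau))\,s^\Lambda(\tau)=O(1)$ uniformly over $K$; and since $\im\,\Pi(\tau)=D(\tau)+O(1)$ with $D(\tau)=\operatorname{diag}(-\tfrac1{2\pi}\log|\tau'_j|)\to+\infty$ (Lemma~\ref{lemma:Piasymptotics}), it follows that $\|(\im\,\Pi(\tau))^{-1}\|\to0$, hence $\|s^\Lambda(\tau)\|\to0$, uniformly over $K$.

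The final step is passage to the limit. Comparing the diagonal leading terms of Lemmas~\ref{lemma:Piasymptotics} and~\ref{lemma:Piderasymptotics} — here one uses the reality assumption on the residues in force in this section (see Corollary~\ref{cor:Piderasymptotics}), so $\bar\rho_j/\rho_j=1$ — yields $\tfrac{d\Pi}{d\mu}(\tau)=-\im\,\Pi(\tau)+O(1)$. Together with $(\im\,\Pi)s^\Lambda=O(1)$, $\|s^\Lambda\|$ bounded and the symmetry of $\tfrac{d\Pi}{d\mu}$, this gives $\tfrac{d\Pi}{d\mu}\,s^\Lambda=O(1)$, hence $\widetilde B^\Lambda(\tau)=\tfrac{d\Pi}{d\mu}(\tau)+O(1)$, while $G^\Lambda(\tau)=\im\,\Pi(\tau)+o(1)$, all uniformly over $K$. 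Let $P^\Lambda(\tau)$ be the matrix of a change of basis from $\{v^\Lambda_k\}$ to a Hodge-orthonormal basis $\{c^\Lambda_i(\tau)\}$ of $\Lambda$, so $(P^\Lambda)^{T}G^\Lambda P^\Lambda=I$ and $B^\R_{\omega_\tau}(c^\Lambda_i(\tau),c^\Lambda_j(\tau))=[(P^\Lambda)^{T}\widetilde B^\Lambda P^\Lambda]_{ij}$. From $G^\Lambda\succeq\im\,\Pi\succeq(\min_j D_{jj})I-O(1)$ one gets $\|P^\Lambda\|^{2}=\|(G^\Lambda)^{-1}\|=O\bigl(1/\min_j D_{jj}\bigr)\to0$, while $\|(\im\,\Pi)^{1/2}P^\Lambda\|^{2}=\|(P^\Lambda)^{T}(\im\,\Pi)P^\Lambda\|=\|I-(P^\Lambda)^{T}(G^\Lambda-\im\,\Pi)P^\Lambda\|$ stays bounded. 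Writing $\tfrac{d\Pi}{d\mu}(\tau)+\im\,\Pi(\tau)=(\im\,\Pi)^{1/2}\varepsilon(\tau)(\im\,\Pi)^{1/2}$ with $\|\varepsilon(\tau)\|\to0$ (this is Corollary~\ref{cor:Piderasymptotics}), I conclude
\[
(P^\Lambda)^{T}\widetilde B^\Lambda P^\Lambda
=-I+(P^\Lambda)^{T}\!\bigl(\widetilde B^\Lambda+G^\Lambda\bigr)P^\Lambda
=-I+(P^\Lambda)^{T}(\im\,\Pi)^{1/2}\varepsilon(\tau)(\im\,\Pi)^{1/2}P^\Lambda+(P^\Lambda)^{T}O(1)P^\Lambda\ \longrightarrow\ -I_g,
\]
uniformly over $K$, which is exactly \eqref{eq:Bconverge}; the remaining freedom in the choice of Hodge-orthonormal basis amounts to post-multiplying $P^\Lambda$ by an orthogonal matrix and does not affect the limit.

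The main obstacle, in my view, is the end of the second paragraph together with the uniformity: one must prove $\|s^\Lambda(\tau)\|\to0$ (equivalently $M^\Lambda(\tau)\to I$) uniformly over compact families of transverse Lagrangians, and — crucially — in controlling $\widetilde B^\Lambda=M^{T}\tfrac{d\Pi}{d\mu}M$ the naive bound $\|\tfrac{d\Pi}{d\mu}\|\,\|s^\Lambda\|$ is useless, since it may diverge when the pinching parameters $|\tau'_j|$ tend to $0$ at very different rates; one must instead exploit the exact cancellation $\tfrac{d\Pi}{d\mu}+\im\,\Pi=O(1)$ forced by the reality of the residues. The cancellation of the cross-terms in the Gram matrix, forced by the Lagrangian hypothesis, is the other spot where genuine geometry rather than a soft estimate enters.
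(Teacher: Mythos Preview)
Your argument is correct and follows essentially the same route as the paper's proof: both express $B$ in the $\theta$-basis via Rauch's formula, reduce to the period--matrix asymptotics of Lemmas~\ref{lemma:Piasymptotics}--\ref{lemma:Piderasymptotics} and Corollary~\ref{cor:Piderasymptotics}, and control the passage to a Hodge-orthonormal frame by showing that the relevant change-of-basis matrix, after rescaling by $(\im\,\Pi)^{1/2}$, is close to an orthogonal matrix. The differences are organizational. You normalize so that $\alpha=I$ from the outset and work in the fixed (non-orthonormal) basis $\{v^\Lambda_k\}$, expanding $M^{T}\tfrac{d\Pi}{d\mu}M$ additively and bounding the cross terms via the cancellation $\tfrac{d\Pi}{d\mu}+\im\,\Pi=O(1)$ together with $(\im\,\Pi)s^\Lambda=O(1)$; the paper instead writes the orthonormal-basis matrix $Z=R\zeta$ directly, sets $\Cal O(\tau):=Z(\tau)\,(\im\,\Pi(\tau))^{1/2}$, shows $\Cal O(\tau)\to O(g,\R)$, and concludes from the factored identity $B(\tau)=\Cal O(\tau)\bigl[(\im\,\Pi)^{-1/2}\tfrac{d\Pi}{d\mu}(\im\,\Pi)^{-1/2}\bigr]\Cal O(\tau)^{t}$. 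This multiplicative packaging sidesteps the ``large times small'' issue you flag as your main obstacle without needing any explicit cancellation. One further minor point: you invoke the Lagrangian hypothesis (symmetry of $(\im\,\Pi)s^\Lambda$, equivalently of $\alpha^{-1}\beta$) to make the Gram matrix computation clean, whereas the paper remarks explicitly that this symmetry ``will play no role in the argument''; indeed the real Hodge Gram matrix is $\re\bigl[M(\im\,\Pi)M^{*}\bigr]=\im\,\Pi+s(\im\,\Pi)s^{T}$ regardless, since the cross terms are purely imaginary. Your use of it is not wrong, just unnecessary.
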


\begin{proof}
Let $\{\theta_1(\tau),\dots,\theta_g(\tau)\}$ be the dual basis of holomorphic differentials on the 
Riemann surface $S_\tau$, defined by the standard condition $\theta_i(a_j)=\delta_{ij}$, for all $i$, 
$j\in \{1, \dots, g\}$. Let $\{c^\Lambda_1, \dots, c^\Lambda_g \} \subset H^1(M,\R)$ be a fixed basis 
of the Lagrangian subspace $\Lambda$, represented by the system of harmonic differentials 
$\{\re\,\xi_1(\tau), \dots, \re\,\xi_g(\tau)\}$ on the Riemann surface $S_\tau$. The differentials 
$\xi_1(\tau), \dots, \xi_g(\tau)$ are holomorphic and form a basis of the space of holomorphic 
differentials on $S_\tau$.  As a consequence, there exists a complex $g\times g$ invertible matrix 
$\zeta(\tau):=(\zeta_{ij}(\tau))$ such that, for all $i\in \{1,\dots, g\}$,
\begin{equation}
\label{eq:xi}
\xi_i(\tau) \,=\, \sum_{j=1}^g  \zeta_{ij}(\tau)\, \theta_j(\tau)\,\,. 
\end{equation}
Let $\alpha$, $\beta$ be the $g\times g$ real matrices defined by
\begin{equation}
\label{eq:alphabeta}
\alpha_{ij}:=\int_{a_j} \re\,\xi_i(\tau)  \,\,, \quad  \beta_{ij}:=\int_{b_j} \re\,\xi_i(\tau) \,\,. 
\end{equation}
Since the harmonic differential $\re\,\xi_i(\tau)$ represents the fixed cohomology class $c^\Lambda_i \in 
H^1(S,\R)$, for all $i\in \{1,\dots, g\}$, the matrices $\alpha$ and $\beta$ do not depend on 
$\tau\in \C^{g+s}$. In addition, since $\Lambda \cap P(\Cal A)=\{0\}$, the matrix $\alpha$ is invertible. 
By the definition of the dual basis $\{\theta_1(\tau),\dots,\theta_g(\tau)\}$ and of the period matrix 
$\Pi(\tau)$ of the Riemann surface $S_\tau$ with respect to the canonical homology basis 
$\{[a_1],\dots, [a_g],[b_1],\dots, [b_g]\}$, we have
\begin{equation}
\label{eq:zeta}
\re\,\zeta(\tau) = \alpha  \,\, , \quad  \im\,\zeta(\tau)= \left( \alpha\, \re\,\Pi(\tau) - \beta \right)  
(\im\,\Pi(\tau))^{-1}\,\,. 
\end{equation}
Let $\{ c^\Lambda_1(\tau), \dots, c^\Lambda_g(\tau) \} \subset H^1(M,\R)$ be any orthonormal basis of the subspace $\Lambda$ with respect to the Hodge inner product on $S_\tau$, represented by a orthonormal system of harmonic differentials  $\{ \re\,h_1(\tau), \dots, \re\,h_g(\tau)\}$ on the Riemann surface $S_\tau$. 

There exist  a {\it real }invertible matrix $R(\tau):=(R_{ij}(\tau))$  and  a complex invertible matrix
$Z(\tau):= (Z_{ij}(\tau))$ such that the basis $\{h_1(\tau),\dots, h_g(\tau)\}$ 
can be written in terms of the bases $\{ \xi_1(\tau), \dots, \xi_g(\tau)\}$ and $\{\theta_1(\tau),\dots,
\theta_g(\tau)\}$ of holomorphic differentials by the formulas
\begin{equation}
\label{eq:orthonormalbasis}
h_i(\tau) \,= \,\sum_{j=1}^g  R_{ij}(\tau)\, \xi_j(\tau)  = \sum_{j=1}^g Z_{ij}(\tau)\, \theta_j(\tau)\,.
\end{equation}
(We remark that the matrix $Z(\tau)$  is the inverse of the matrix $C$ of change of basis which appears
in \cite{Ftwo}, formulas  $(4.5)$, $(4.6)$ and $(8.32)$). 

By (\ref{eq:xi}) and (\ref{eq:orthonormalbasis}) it follows that $Z(\tau)= R(\tau)\,
\zeta(\tau)$, hence by (\ref{eq:zeta}),
\begin{equation}
\begin{aligned}
\re\,Z(\tau) &= R(\tau)\,\alpha  \,, \\
 \im\,Z(\tau)&=R(\tau)\, \Bigr(\alpha \re\,\Pi(\tau)- 
\,\beta \Bigr) \im\,\Pi(\tau)^{-1}\,\,. 
\end{aligned}
\end{equation}
Since the matrix $\alpha$ is invertible, we can write
\begin{equation}
\label{eq:ImZ}
\im\,Z(\tau)=\re\,Z(\tau) \, \Bigl( \re\,\Pi(\tau) - \alpha^{-1}\beta \Bigr) \im\,\Pi(\tau)^{-1}\,\,.   
\end{equation}

\noindent Let $B(\tau)$ be the symmetric complex matrix defined as follows:
\begin{equation}
\label{eq:Bdef}
 B_{ij}(\tau):=B^\R_{\omega_\tau}\left(c^\Lambda_i(\tau), c^\Lambda_j(\tau)\right)\,,
\quad \text{ \rm for all } i, j \in \{1, \dots, g\}\,.
\end{equation}
By definition (\ref{eq:B}), the matrix $B(\tau)$ depends on the holomorphic differential 
$\omega_\tau\in \H(\kappa)$ as well as on the Hodge orthonormal basis 
$\{c^\Lambda_1(\tau),\dots,c^\Lambda_g(\tau)\}$ of the Lagrangian subspace $\Lambda
 \in G_{\Cal A}(S, \R)$.

Let  ${{d\Pi}(\tau)/ {d\mu}}$ denote  the derivative of the period matrix in the direction of the Teichm\"uller  flow at the holomorphic differential $\omega_\tau \in \H(\kappa)$. 

We claim that the following formula holds (see also \cite{Ftwo}, formula $(8.32)$):
\begin{equation}
\label{eq:Bmatrix}
B(\tau)= Z(\tau)\,{  \frac{d\Pi}{d\mu}} (\tau) \,Z(\tau)^t\,\,.
\end{equation}
In fact, by Rauch's variational formula, for all $i$, $j \in   \{1, \dots, g\}$, 
$$
B_{\omega_\tau}(\theta_i(\tau),\theta_j(\tau)) =  \frac{\imath}{2} \int_{S} \theta_i(\tau) \,\theta_j(\tau)\,  
  \frac{\bar \omega_\tau}{\omega_\tau} = {\frac{d\Pi_{ij}}{d\mu}}(\tau) \,, 
$$
since the form $B_{\omega_\tau}$ is bilinear, it then follows from the identities 
(\ref{eq:orthonormalbasis}) that
\begin{equation}
\begin{aligned}
B_{ij}(\tau)&=    \sum_{s,t=1}^g  Z_{is}(\tau) Z_{jt}(\tau)
B_{\omega_\tau}(\theta_s(\tau),\theta_t(\tau)) \\
   &=\sum_{s,t=1}^g  Z_{is}(\tau) \, Z_{jt}(\tau) \,  [{\frac{d\Pi_{st}}{d\mu}}(\tau)]\,.
 \end{aligned}
\end{equation}

\smallskip
According to a classical formula \cite{FK}, III.2.3, for any basis $\{\theta_1, 
\dots, \theta_g\}$ dual to a canonical homology basis $\{[a_1], \dots, [a_g], [b_1], \dots, [b_g]\}$, 
$$
{ \frac{\imath}{2}}  \int_M \theta_i \wedge {\overline \theta_j}={ \frac{\imath}{2}} 
  \sum_{s=1}^g [\int_{a_s} \theta_i \int_{b_s} {\overline \theta_j}- 
  \int_{b_s}\theta_i  \int_{a_s}{\overline \theta_j}] = \im\,\Pi_{ij}\,\,,
$$
hence the orthonormality condition on the basis $\{h_1(\tau), \dots, h_g(\tau)\}$ yields
$$
\delta_{ij} ={ \frac{\imath}{2}} 
 \int_S h_i(\tau) \wedge {\overline {h_j(\tau)}}=\sum_{s,t=1}^g Z_{is}
(\tau)\, \im\,\Pi_{st}(\tau)\,{\overline {Z_{jt}(\tau)} }\,\,,
$$
hence $Z(\tau)\,\im\,\Pi(\tau)\,Z(\tau)^{\ast}= I_g$ (equivalent to \cite{Ftwo}, $(4.5)$). It follows that
\begin{equation}
\label{eq:Z}
\begin{aligned}
 \re\,Z(\tau)\,\im\,\Pi(\tau)\, \re\,Z(\tau)^t \,+\,  \im\,Z(\tau)\,\im\,\Pi(\tau)  \,\im\,Z(\tau)^t \,&=\, I_g\,\,; \\
 \im\,Z(\tau) \,\im\,\Pi(\tau) \,\re\,Z(\tau)^t  \,- \, \re\,Z(\tau)\,\im\,\Pi(\tau) \,\im\,Z(\tau)^t  
 \,&=\, 0 \,\,. 
 \end{aligned}
\end{equation}

A calculation shows that the second identity in formula (\ref{eq:Z}) is equivalent to the symmetry of
the matrix $\alpha^{-1}\beta$, a property which will play no role in the argument.  We can rewrite
the first identity in (\ref{eq:Z}) as follows. Let  
$$
\Cal E(\tau):= \im\,\Pi(\tau) ^{-1/2}  \left( \re\,\Pi(\tau) - \alpha^{-1}\beta \right) \im\,\Pi(\tau)^{-1/2}\,.
$$
By formulas (\ref{eq:ImZ}) and (\ref{eq:Z})  we have 
$$
\re\,Z(\tau)\, \im\,\Pi(\tau)^{1/2} \Bigl (I_g +  \Cal E(\tau) \Cal E(\tau)^t \Bigr) \im\,\Pi(\tau) ^{1/2}
\, \re\,Z(\tau)^t \,=\, I_g \,.
$$
 It follows from Lemma~\ref{lemma:Piasymptotics} that the matrix $\Cal E(\tau) \to 0$ as $\tau\to 0$, 
 hence the invertible matrix $\re\,Z(\tau) \im\,\Pi(\tau)^{1/2}$ converges to the compact subgroup 
 $O(g,\R)\subset GL(g,\R)$ of $g\times g$ real orthogonal matrices. Hence, for all $\tau \in
 [\C\setminus\{0\}]^{g+s}$ near $0$, there exist a orthogonal real matrix $O(\tau)\in O(g,\R)$ and a matrix 
 $\Cal E' (\tau)$, which converges to $0$ in $GL(g,\R)$ as $\tau\to 0$, such that  
 \begin{equation}
 \label{eq:ReZasympt}
 \re\,Z(\tau)\,\im\,\Pi(\tau)^{1/2} = O(\tau) \left( I + \Cal E' (\tau)\right)  
\end{equation}
Since $O(g,\R)$ is compact, it follows by (\ref{eq:ImZ}) and (\ref{eq:ReZasympt}) that, as $\tau \to 0$,
\begin{equation}
 \label{eq:ImZasympt}
\im\,Z(\tau)\, \im\,\Pi(\tau)^{1/2}= O(\tau) \left( I + \Cal E' (\tau)\right)  \Cal E(\tau) \to 0 \,.
\end{equation}
By formulas (\ref{eq:ReZasympt}) and  (\ref{eq:ImZasympt}) it follows that
\begin{equation}
\label{eq:ZPi}
 \Cal O(\tau) := Z(\tau)\,\im\,\Pi(\tau)^{1/2}  \to  O(g,\R) \,, \quad  \text{ as } \, \tau \to 0\,.
\end{equation}
Since by formula (\ref{eq:Bmatrix}) the matrix $B(\tau)$ can be written as
\begin{equation}
\label{eq:Btau}
 B(\tau)= \Cal O(\tau)  \im\,\Pi(\tau)^{-1/2} \,{\frac{d\Pi}{d\mu}(\tau)}\, 
 \im\,\Pi(\tau)^{-1/2}   \Cal O(\tau)^t\,,
\end{equation}
the desired asymptotic formula  (\ref{eq:Bconverge}) follows the definition of the matrix $B(\tau)$ 
in formula (\ref{eq:Bdef}), from formula (\ref{eq:ZPi}) and from Corollary~\ref{cor:Piderasymptotics}.

 The convergence in  (\ref{eq:Bconverge}) is uniform since as the Lagrangian subspace $\Lambda$ varies in a compact subset of  $G_{\Cal A}(S,\R)$, the basis $\{c^\Lambda_1,\dots, c^\Lambda_g\}$ of $\Lambda$ (fixed with respect to the pinching parameter $\tau \in [\C\setminus\{0\}]^{g+s}$) can be chosen so that the matrices $\alpha^{-1}$ and $\beta$, defined in (\ref{eq:alphabeta}), are uniformly bounded, hence the matrices $\Cal E(\tau)$ and $\Cal E'(\tau)$ converge to $0$ uniformly as 
 $\tau\to 0$. 
\end{proof}

\section{Non-uniform hyperbolicity}
\label{Nuh}

The proof of the main theorem proceeds by contradiction or contraposition based on the
following corollary of the formulas for the Kontsevich--Zorich exponents given in section 
\ref{expformulas}.

\begin{lemma} 
\label{lemma:vanishing}
Let $\mu$ be a $\SL$-invariant ergodic probability measure on a stratum 
$\H(\kappa)$ of the moduli space of abelian differential. If the Kontsevich--Zorich 
cocycle has $g-k$ zero exponents, that is, if 
$$
\lambda^\mu_{k+1} = \dots= \lambda^\mu_g =0\,, 
$$
then for $\mu$-almost all $\omega\in \H(\kappa)$ and for any Hodge-orthonormal isotropic
system $\{c_{k+1}, \dots, c_g\} \subset H^1(S_\omega, \R)$, Hodge orthogonal and symplectic orthogonal to the $k$-dimensional unstable space $E^+_k(\omega) \subset H^1(S_\omega, \R)$ 
of the cocycle, the following holds:
\begin{equation}
B^\R_\omega (c_i , c_j ) =0 \,, \quad \text{ for all }\, i,j \in \{k+1, \dots, g\}\,.
\end{equation}
\end{lemma}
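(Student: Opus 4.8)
The plan is to compare two expressions for the partial sum $\lambda^\mu_1+\dots+\lambda^\mu_k$ of the non-negative Kontsevich--Zorich exponents: on one hand the partial-sum formula of Theorem~\ref{thm:partialsum}, which expresses it as the $\mu$-integral of $\Phi_k(\omega,E^+_k(\omega))$, and on the other hand the Kontsevich--Zorich formula of Corollary~\ref{cor:KZformula} for $\lambda^\mu_1+\dots+\lambda^\mu_g$. Under the hypothesis $\lambda^\mu_{k+1}=\dots=\lambda^\mu_g=0$ these two partial sums coincide, and this collapses a pointwise inequality between two bounded non-negative functions on $\H(\kappa)$ into an equality $\mu$-almost everywhere, from which the vanishing of the entries $B^\R_\omega(c_i,c_j)$ is read off directly.

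First I would fix the set-up (the case $k=g$ being vacuous). We may assume $\lambda^\mu_k>\lambda^\mu_{k+1}=0$, as otherwise there would be more than $g-k$ vanishing exponents; in particular $\lambda^\mu_1,\dots,\lambda^\mu_k$ are all strictly positive, and since the Kontsevich--Zorich cocycle is symplectic the Oseledec subbundle $E^+_k$ carrying $\{\lambda^\mu_1,\dots,\lambda^\mu_k\}$ is a $\mu$-almost everywhere defined $k$-dimensional \emph{isotropic} subbundle of the real Hodge bundle (isotropy is in any case implicit in the statement of Theorem~\ref{thm:partialsum}, where $\Phi_k$ is only defined on isotropic subspaces). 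Theorem~\ref{thm:partialsum} then gives
\begin{equation*}
\lambda^\mu_1+\dots+\lambda^\mu_k=\int_{\H(\kappa)}\Phi_k\bigl(\omega,E^+_k(\omega)\bigr)\,d\mu(\omega)\,,
\end{equation*}
while the hypothesis together with Corollary~\ref{cor:KZformula} gives
\begin{equation*}
\lambda^\mu_1+\dots+\lambda^\mu_k=\lambda^\mu_1+\dots+\lambda^\mu_g=\int_{\H(\kappa)}\bigl(\Lambda_1+\dots+\Lambda_g\bigr)\,d\mu\,.
\end{equation*}

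Next I would invoke non-negativity. By the definition (\ref{eq:Phi_k}) of $\Phi_k$, for every $\omega$ and every $k$-dimensional isotropic subspace $I_k\subset H^1(S_\omega,\R)$, with any Hodge-orthonormal basis $\{c_1,\dots,c_k\}$ of $I_k$ and any Hodge-orthonormal Lagrangian completion $\{c_1,\dots,c_g\}$,
\begin{equation*}
\Phi_k(\omega,I_k)=\sum_{i=1}^g\Lambda_i(\omega)-\sum_{i,j=k+1}^g\bigl|B^\R_\omega(c_i,c_j)\bigr|^2\;\leq\;\sum_{i=1}^g\Lambda_i(\omega)\,,
\end{equation*}
and by the remark following (\ref{eq:Phi_k}) the subtracted quantity $\sum_{i,j=k+1}^g|B^\R_\omega(c_i,c_j)|^2$ does not depend on these choices. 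Since $\Phi_k$ and $\Lambda_1+\dots+\Lambda_g$ are bounded and measurable (Section~\ref{expformulas}) and the two integrals displayed above are equal, the pointwise inequality must be an equality $\mu$-almost everywhere:
\begin{equation*}
\Phi_k\bigl(\omega,E^+_k(\omega)\bigr)=\Lambda_1(\omega)+\dots+\Lambda_g(\omega)\,,\qquad\text{for $\mu$-almost all }\omega\in\H(\kappa)\,.
\end{equation*}

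Finally I would extract the conclusion. Fix a typical such $\omega$ and let $\{c_{k+1},\dots,c_g\}\subset H^1(S_\omega,\R)$ be a Hodge-orthonormal isotropic system which is simultaneously Hodge-orthogonal and symplectic-orthogonal to $E^+_k(\omega)$. Appending a Hodge-orthonormal basis $\{c_1,\dots,c_k\}$ of the isotropic subspace $E^+_k(\omega)$ yields a $g$-element Hodge-orthonormal system whose span is isotropic --- because $E^+_k(\omega)$ and $\mathrm{span}\{c_{k+1},\dots,c_g\}$ are each isotropic and mutually symplectic-orthogonal --- hence a Hodge-orthonormal Lagrangian completion of $E^+_k(\omega)$. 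Evaluating (\ref{eq:Phi_k}) for $\Phi_k(\omega,E^+_k(\omega))$ with this completion and comparing with the $\mu$-almost everywhere identity just obtained forces $\sum_{i,j=k+1}^g|B^\R_\omega(c_i,c_j)|^2=0$, and hence $B^\R_\omega(c_i,c_j)=0$ for all $i,j\in\{k+1,\dots,g\}$, which is the assertion of the lemma. I do not expect a genuine obstacle: the argument is an equality-case analysis, and the only points that need care --- none of them hard --- are the isotropy of $E^+_k$ for the symplectic cocycle when $\lambda^\mu_k>0$, the measurability and boundedness of the integrands (both already available from Section~\ref{expformulas} and Theorem~\ref{thm:partialsum}), and, the one genuinely substantive ingredient, the independence of the $B$-contribution to $\Phi_k$ from the chosen Lagrangian completion, which is precisely what allows the $\mu$-almost everywhere identity to be upgraded to a statement about an arbitrary admissible system $\{c_{k+1},\dots,c_g\}$.
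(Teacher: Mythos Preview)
Your proposal is correct and follows essentially the same route as the paper: equate the partial-sum formula of Theorem~\ref{thm:partialsum} with the Kontsevich--Zorich formula of Corollary~\ref{cor:KZformula} under the vanishing hypothesis, use the pointwise inequality $\Phi_k\le\sum\Lambda_i$ to force $\mu$-a.e.\ equality, and read off the vanishing of the $B^\R_\omega$-entries. Your write-up is in fact somewhat more careful than the paper's in spelling out the reduction to $\lambda^\mu_k>\lambda^\mu_{k+1}$, the isotropy of $E^+_k$, and the independence of the subtracted $B$-sum from the choice of Lagrangian completion.
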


\begin{proof} By the hypothesis on the exponents, it follows that
$$
\lambda^\mu_1 + \dots + \lambda^\mu_k =  \lambda^\mu_1 + \dots + \lambda^\mu_g\,,
$$
hence by the formula on partial sums of exponents (see Theorem~\ref{thm:partialsum})  and by 
the Kontsevich--Zorich formula (see Corollary~\ref{cor:KZformula}), it follows that
\begin{equation}
\label{eq:inteq}
\int_{\H_g}  \Phi_k (\omega, E^+_k(\omega)) d\mu(\omega)= 
\int_{\H_g}  \left(\Lambda_1(\omega) + \dots + \Lambda_g(\omega)\right) d\mu(\omega)\,.
\end{equation}
By the definition of the functions $\Phi_k$ in formula (\ref{eq:Phi_k}), it follows that
$$
 \Phi_k (\omega, E^+_k(\omega)) \leq  \Lambda_1(\omega) + \dots + \Lambda_g(\omega)
$$
hence by the integral identity (\ref{eq:inteq}) equality holds $\mu$-almost everywhere.
It follows that for $\mu$-almost all $\omega\in \H(\kappa)$ and for any Hodge orthonormal
isotropic system $\{c_{k+1}, \dots, c_g\} \subset H^1(S_\omega, \R)$, Hodge orthogonal and
symplectic orthogonal to $E^+_k(\omega) \subset H^1(S_\omega, \R)$, we have:
$$
\sum_{i,j=k+1}^g \vert B^\R_\omega(c_i,c_j) \vert^2 =0\,.
$$
which immediately implies the desired conclusion.
\end{proof}

We finally prove our main result.

\begin{proof} [Proof of Theorem~\ref{thm:main} ]

Since the measure $\mu$ on $\H(\kappa)$ is cuspidal Lagrangian, there is a holomorphic
abelian differential $\omega_0 \in \text{ \rm supp}(\mu)$ such that the properties $(1)$ and
$(2)$ stated in Lemma~\ref{lemma:specialpoints} hold.

Let $\{\omega_t \vert t\geq 0 \}$ denote the forward Teichm\"uller orbit of the holomorphic differential
$\omega_0$ on $S_0$. For each $t\geq 0$, the differential $\omega_t$  is holomorphic on a (unique) Riemann surface $S_t$. Since $\omega_0$ has Lagrangian vertical foliation, the \emph{marked}
abelian differential $\omega_t$ converges projectively as $t\to +\infty$ to a meromorphic differential
on a union of punctured Riemann sphere  with poles at all (paired) punctures, obtained by pinching
the waist curves $\{a_1, \dots, a_{g+s}\}$ of all cylinders of the vertical foliation $\F^v_{\omega_0}$
on $S_0$ (see \cite{Masur1}, Theorem 3).

Let $\Cal A:= \L(\F^v_{\omega_0}) \subset H_1(S_0, \R)$ be the Lagrangian subspace 
generated by the system $\{[a_1], \dots, [a_{g+s}]\} \subset H_1(S_0, \R)$.  By Lemma~\ref{lemma:specialpoints}, the unstable manifold $E^+(\omega_0)$ is well-defined and transverse
to the Poincar\'e dual $P(\Cal A) \subset H^1(S_0, \R)$. Thus there exists a Lagrangian 
subspace $\Lambda_0\subset  H^1(S_0, \R)$  such that $E^+(\omega_0) \subset \Lambda_0$
and $\Lambda_0 \cap P(\Cal A) = \{0\}$, that is $\Lambda_0 \in G_{\Cal A}(S_0, \R)$.

By Lemma~\ref{lemma:Bconverge}, for any compact subset $\Cal G_0 \subset G_{\Cal A}(S_0, \R)$, there exists $t(\Cal G_0)>0$ such that, for all $t\geq t(\Cal G_0)$, for all $\Lambda \in  \Cal G_0$ and for any Hodge orthonormal basis $\{c_1^\Lambda(t), \dots,  c_g^\Lambda(t)\}$ of $\Lambda\subset 
H^1(S_0, \R)$,
$$
\vert B^\R_{\omega_t} \left( c_i^\Lambda(t), c_j^\Lambda(t) \right) +\delta_{ij}  \vert  \leq 1/4\,.
$$
Let  $\Cal G_0 \subset G_{\Cal A}(S_0, \R)$ be any given compact neighborhood of the
Lagrangian subspace $\Lambda_0\subset H^1(S_0, \R)$. Fix any $t>t(\Cal G_0)$. By continuity 
of the Hodge product and of the form $B_\omega$ with respect to the abelian differential 
$\omega \in \H(\kappa)$, there exists a neighboorhood $\Cal V_t \subset \H(\kappa)$ such that, 
for all $\Lambda \in  \Cal G_0$ and for any Hodge orthonormal basis 
$\{c_1^\Lambda(\omega), \dots,  c_g^\Lambda(\omega)\}$ of $\Lambda\subset H^1(S_0, \R)$,
$$
\vert B^\R_{\omega} \left( c_i^\Lambda(\omega), c_j^\Lambda(\omega) \right) +\delta_{ij}  \vert \leq 1/2\,.
$$
Let $\Cal V$ be a neighborhood of $\omega_0$ in $\H(\kappa)$ such that  $g_t(\Cal V) \subset 
\Cal V_t$ and such that for any $\omega \in \Cal V \cap \Cal P_\kappa$, the unstable subspace
$E^+(\omega)$ is contained in a Lagrangian subspace $\Lambda_\omega \in \Cal G_0$.
Since $\omega_0$ is a density point of $\Cal P_\kappa$, by the Teichm\"uller invariance of
the measure, the set $\Cal P_\kappa(t) := g_t (\Cal P_\kappa \cap \Cal V)$ has positive
$\mu$-measure. By construction $g_t (\Cal P_\kappa \cap \Cal V)$ and, for all
$\omega \in \Cal P_\kappa\cap \Cal V$, the unstable space $E^+(g_t\omega) = E^+(\omega) 
\subset  \Lambda_\omega \in \Cal G_0$. It follows that for all $\omega \in \Cal P_\kappa(t)$
and for any Hodge orthonormal basis $\{c_1(\omega) ,\dots c_g(\omega) \}$ of the Lagrangian
subspace $\Lambda_\omega \subset H^1(S_0, \R)$, 
\begin{equation}
\label{eq:Bpositive}
\vert B^\R_{\omega} \left( c_i(\omega), c_i(\omega) \right)  \vert  \geq 1/2 \,,
 \quad \text{ for all } \, i\in \{1, \dots, g\}\,.
\end{equation}
Let us assume that there exists $k<g$ such that $\lambda^\mu_k > \lambda^\mu_{k+1}=0$.
It follows from Lemma~\ref{lemma:vanishing} that, for $\mu$-almost all $\omega \in \H(\kappa)$
and for \emph{any }Hodge orthonormal isotropic system $\{c_{k+1}, \dots, c_g\} \subset H^1(S_\omega, \R)$, Hodge orthogonal and symplectic orthogonal to $E^+(\omega) \subset H^1(S_\omega, \R)$,
\begin{equation}
\label{eq:Bzero}
B^\R_{\omega}( c_i, c_i)  =0\,,  \quad \text{ for all } \, i\in \{k+1, \dots, g\}\,.
\end{equation}
Since by construction $E^+(\omega) \subset \Lambda_\omega$ for all $\omega \in \Cal P_\kappa(t)$,  there exists a Hodge orthonormal basis $\{c_1(\omega) ,\dots, c_g(\omega) \}$ such that 
$\{c_1(\omega) ,\dots, c_k(\omega) \}$ is a basis of $E^+(\omega)$. 
However, since $\Cal P_\kappa(t)$ has positive $\mu$-measure, both the estimate (\ref{eq:Bpositive})
and the identity  (\ref{eq:Bzero}) hold on $\Cal P_\kappa(t)$ leading to a contradiction.
It follows that all the Kontsevich--Zorich exponents are all non-zero.
\end{proof}

For the sake of completeness we sketch below the proof of Theorem~\ref{thm:partial}
following the argument given in \cite{Ftwo}, Cor.~5.4, and \cite{Forni:Matheus:Zorich:two}.

\begin{proof}[Proof of Theorem~\ref{thm:partial}]
If $\mu$ is Lagrangian, there exists $\omega\in \text{\rm supp}(\mu)$ such that the form
$B^\R_{\omega}$ has maximal rank. In fact, let $\omega_0\in  \text{\rm supp}(\mu)$, 
be a holomorphic abelian differential with Lagrangian vertical foliation. The marked
abelian differential $g_t \omega$ converges projectively to a meromorphic differential
on a union of punctured Riemann spheres with poles at all (paired) punctures. It follows
from Lemma~\ref{lemma:Bconverge} that for $t>0$ sufficiently large the form $B^\R_{g_t\omega}$
has maximal rank. By continuity there exists an open set $\Cal U\subset \H(\kappa)$ of positive
$\mu$-measure such that $B^\R_{\omega}$ has maximal rank for all $\omega \in \Cal U$.
By an elementary linear algebra argument, since $B^\R_{\omega}$ has maximal rank, 
the existence of a Hodge orthonormal Lagrangian system $\{ c_1, \dots, c_g\}
\subset H^1(S,\R)$ such that
$$
B^\R_{\omega}(c_i, c_j) =0\,, \quad \text{ \rm for all } i,j \in \{r+1, \dots, g\}\,,
$$
implies that  $r \geq g/2$. Thus the result follows from Lemma~\ref{lemma:vanishing}.
\end{proof}

\section{Fundamental applications}
\label{applications}

\subsection{Veech surfaces}
\label{Veechsurfaces}

The case of the Veech surfaces ($n$-gons) found by W.~Veech in \cite{Veech3} was one of the
original motivation of the construction in \cite{Ftwo}, in particular it inspired the notion of a 
Lagrangian measured foliation and the related focus on meromorphic abelian differentials on 
spheres with $2g$ paired punctures at the boundary of the moduli space. P.~Hubert has 
recently explained to the author that the relevant properties of the Veech $n$-gons of
\cite{Veech3} are in fact shared by a larger class of Veech surfaces, called \emph{algebraically
primitive }Veech surfaces. For all $\SL$-invariant probability measure associated to 
algebraically primitive Veech surfaces, the non-uniform hyperbolicity of the Kontsevich--Zorich
cocycle follows from the formulas of Bouw-M\"oller \cite{Bouw:Moeller}  for individual exponents.
We remark that from the Bouw-M\"oller formulas it seems possible to construct examples 
of non-uniformly hyperbolic $\SL$-invariant probability measures with multiple exponents.

We briefly recall below basic definitions concerning Veech surfaces.

Let $\omega$ be an abelian holomorphic differential on a Riemann surface $S$ 
(or equivalently, let $(S, \omega)$ be a translation surface). The $\SL$-orbit of $\omega$ 
in the Teichm\"uller space of abelian differentials is called its \emph{Teichm\"uller disk}
(it is in fact isomorphic to the unit tangent bundle of a Poincar\'e disk). 

The stabilizer $\operatorname{SL}(\omega) < \SL$ of $\omega$  is a Fuchsian group, called the 
\emph{Veech group}. A translation surface $(S,\omega)$ is called a \emph{Veech surface} if and 
only if the Veech group $\operatorname{SL}(\omega)$ is a lattice in $\SL$. In \cite{Veech3}
Veech proved that if the Veech group is a lattice a dichotomy holds for the directional flows of 
the translation surface: a directional flow is either uniquely ergodic or it is completely
periodic.  

By definition, the $\SL$-orbit of a Veech surface in the moduli  space $\H_g$ of abelian differentials is isomorphic to an immersed hyperbolic surface of finite volume. The (normalized) canonical hyperbolic measure supported on the $\SL$-orbit of a Veech surface in $\H_g$
is a fundamental example of a $\SL$-invariant probability measure on $\H_g$.

An important invariant of a Veech surface is the \emph{trace field }of its Veech group,
that is, the group generated by all the traces of elements of the Veech group. By a theorem
of R.~Kenyon and J.~Smillie \cite{Kenyon:Smillie}, the trace field of a translation surface of genus 
$g\geq 1$ has degree at most $g$ over $\Q$. 

\begin{definition} (see \cite{Moeller1}, \S 2)
A Veech surface $(S,\omega)$ of genus $g\geq 1$ is called \emph{algebraically primitive }if the 
trace field of its Veech group $\operatorname{SL}(\omega)$ has exactly degree $g$ over $\Q$,
that is, it has maximal degree. 
\end{definition}

We recall that a Veech surface is called \emph{geometrically primitive }if it is not a branched cover 
over another Veech surface (of lower genus). While any algebraically primitive Veech surface is geometrically primitive, the converse is not true in genus $g\geq 3$ (see \cite{Moeller1}, \S 2, and 
\cite{McMullen2}).

\smallskip

We introduce below a  class of Veech surfaces, which includes all algebraically primitive Veech surfaces 
and many examples of geometrically, but not algebraically, primitive Veech surfaces (as well as many
non-primitive surfaces)  to which our criterion applies.

\begin{definition} A translation surface $(S,\omega)$ will be called \emph{decomposable }if the set $\mathcal P(\omega)\subset {\mathbb P}^1(\R)$ of completely periodic directions has at least two distinct elements. It is called a \emph{prelattice surface} (or a \emph{bouillabaisse surface}, see \cite{Hubert:Lanneau}) if its Veech group $\operatorname{SL}(\omega)$ contains two transverse parabolic elements. Any prelattice surface has two completely periodic transverse directions (and in each direction the surface split as a union of flat cylinders with commensurable moduli \cite{Veech3}). In particular, it is decomposable.
\end{definition}

Let $(S,\omega)$ be a decomposable translation surface. For any pair of distinct (transverse) directions $\alpha$, $\beta \in \mathcal P(\omega)$,  the \emph{intersection matrix }$E^\omega(\alpha,\beta)$ is defined as follows (see  \cite{Veech3}, \cite{Hubert:Lanneau}).  Let $\{C^\alpha_i \vert i\in \{1, \dots, r\}\}$ and $\{C^\beta_i \vert i\in \{1, \dots, s\}\}$ be the families of all flat cylinders in the direction $\alpha$ and $\beta\in \mathcal P(\omega)$ respectively. The matrix $E^\omega(\alpha,\beta)$ is the $r\times s$ non-negative integer matrix such that $E^\omega_{ij}(\alpha,\beta)$ is equal to the number of parallelograms in the intersection $C^\alpha_i \cap C^\beta_j$. In other terms, the non-negative integer $E^\omega_{ij}(\alpha,\beta)$ is equal to the (algebraic) intersection number of the (positively oriented) waist curves 
$\gamma^\alpha_i$ and $\gamma^\beta_j$ of the cyclinders  $C^\alpha_i$ and $C^\beta_j$ respectively.

\begin{definition} 
\label{def:homrank}
The \emph{homological rank }$r(\omega) \in \{0, 1,\dots g\}$ of a decomposable translation surface $(S,\omega)$ of genus $g\geq 1$ is the integer defined as follows: 
$$
r(\omega) :=  \max \{ \text{ \rm rank } E^\omega(\alpha,\beta) \vert  \alpha\not= \beta \in \mathcal P(\omega) \}\,.
$$
\end{definition}

We remark that it is quite immediate to prove that the homological rank of a translation surface
is at most equal to its genus. In fact, the waist curves of any cylinder decomposition generate
an isotropic subspace of the homology with respect to the intersection form. Since the dimension
of isotropic subspace is at most equal to the genus of the surface, our conclusion follows.

We can thus state the main application of our criterion to Veech surfaces:

\begin{theorem} 
\label{thm:KZexphomrank}
Let $\mu_\omega$ denote the unique $\SL$-invariant probability measure on the moduli space 
$\H_g$ supported on the $\SL$-orbit of a Veech surface $(S, \omega)\in \H_g$. If the Veech surface has maximal homological rank, then the Kontsevich--Zorich cocycle is non-uniformly hyperbolic. In fact, 
the following general statement holds. Let $r:=r(\omega)\in \{1, \dots, g\}$ denote the homological rank 
of a Veech surface $(S, \omega)$. The Kontsevich--Zorich exponents satisfy the inequalities:
$$
\lambda^{\mu_\omega}_1=1 > \lambda^{\mu_\omega}_2 \geq \dots \geq  \lambda^{\mu_\omega}_r >0\,.
$$
\end{theorem}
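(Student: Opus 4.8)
The plan is to deduce Theorem~\ref{thm:KZexphomrank} from Theorem~\ref{thm:general} by showing that a Veech surface $(S,\omega)$ of homological rank $r$ gives rise to a cuspidal $\SL$-invariant measure $\mu_\omega$ whose homological dimension is at least $r$. The measure $\mu_\omega$ is supported on a closed $\SL$-orbit, so it is affine and therefore has a local product structure; cuspidality then amounts to exhibiting in $\mathrm{supp}(\mu_\omega)$ a holomorphic differential with completely periodic horizontal or vertical foliation of homological dimension $\geq r$. First I would pick the pair of transverse completely periodic directions $\alpha\neq\beta\in\mathcal P(\omega)$ realizing the maximum $r=\mathrm{rank}\,E^\omega(\alpha,\beta)$. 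Rotating by an element of $\SL$ I may assume $\alpha$ is the vertical direction and $\beta$ the horizontal direction, so that $\omega$ itself (up to this rotation, which moves us within $\mathrm{supp}(\mu_\omega)$) has completely periodic vertical foliation $\F^v_\omega$ whose cylinder waist curves $\gamma^\alpha_1,\dots,\gamma^\alpha_s$ span the isotropic subspace $\L(\F^v_\omega)=\mathcal A\subset H_1(S,\R)$.

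The heart of the argument is the identity
\begin{equation}
\label{eq:homdimrank}
\dim \L(\F^v_\omega) \;\geq\; \mathrm{rank}\, E^\omega(\alpha,\beta)\,.
\end{equation}
To see this, recall that $E^\omega_{ij}(\alpha,\beta)$ is the algebraic intersection number $\langle\gamma^\alpha_i,\gamma^\beta_j\rangle$ of the (oriented) waist curves. Hence $E^\omega(\alpha,\beta)$ is precisely the matrix, in the bases $\{[\gamma^\alpha_i]\}$ of $\L(\F^v_\omega)$ and $\{[\gamma^\beta_j]\}$ of $\L(\F^h_\omega)$, of the bilinear pairing induced by the intersection form between these two isotropic subspaces. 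The rank of a bilinear form is bounded above by the dimension of either factor, so $\mathrm{rank}\,E^\omega(\alpha,\beta)\leq\dim\L(\F^v_\omega)$, which is \eqref{eq:homdimrank}. Consequently the completely periodic vertical foliation $\F^v_\omega$ has homological dimension at least $r$, so the homological dimension of the measure $\mu_\omega$ in the sense of Definition~\ref{def:cuspLagragian} is at least $r$. (If $r=g$ the foliation is Lagrangian and $\mu_\omega$ is cuspidal Lagrangian, which is the ``maximal homological rank'' case.)

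It then remains to invoke Theorem~\ref{thm:general}: since $\mu_\omega$ is a $\SL$-invariant ergodic probability cuspidal measure of homological dimension $k\geq r$, we obtain
$$
1=\lambda^{\mu_\omega}_1>\lambda^{\mu_\omega}_2\geq\dots\geq\lambda^{\mu_\omega}_k>0\,,
$$
and a fortiori the strict inequalities hold through index $r$. When $r=g$ we may alternatively quote Theorem~\ref{thm:main} directly to get the full non-uniform hyperbolicity \eqref{eq:NUH}. The main obstacle I anticipate is not the spectral input — that is packaged in Theorems~\ref{thm:main} and~\ref{thm:general} — but the bookkeeping needed to justify that $\mu_\omega$ is genuinely cuspidal: one must check that $\mathrm{supp}(\mu_\omega)$ (the closure of the $\SL$-orbit, which for a Veech surface is the orbit together with its cusps) contains the completely periodic differential in question, and that the affine/product-structure hypothesis of Definition~\ref{def:cuspLagragian} is met; this is exactly the content of Lemma~\ref{lemma:bouillabaisse} referenced in the introduction, so I would state and use that lemma here, reducing the present proof to the linear-algebra identity \eqref{eq:homdimrank} plus a citation.
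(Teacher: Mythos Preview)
Your approach is essentially identical to the paper's: verify that $\mu_\omega$ is cuspidal with local product structure, show that the homological dimension of the measure is at least the homological rank $r(\omega)$ via the observation that $E^\omega(\alpha,\beta)$ is the intersection pairing between the waist-curve spans (so its rank is bounded by $\dim\L(\F^v_\omega)$), and then invoke Theorem~\ref{thm:main} in the maximal case and Theorem~\ref{thm:general} in general.

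One correction to your final paragraph: Lemma~\ref{lemma:bouillabaisse} is \emph{not} the statement you need for cuspidality or product structure---that lemma bounds the degree of the trace field by the homological rank and is used for the algebraically primitive corollary, not here. The cuspidality of $\mu_\omega$ is immediate from the Veech dichotomy (a lattice Veech group has parabolic elements, hence completely periodic directions exist in $\mathrm{supp}(\mu_\omega)$), and the local product structure follows, as you correctly say earlier, from the fact that the measure on a closed $\SL$-orbit is affine. So drop the reference to Lemma~\ref{lemma:bouillabaisse} and your argument is clean.
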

\begin{proof} It is immediate to verify that the measure $\mu_\omega$ has a local product
structure and it is cuspidal. By definition its homological dimension is at least equal to the homological rank of the Veech suface $(S,\omega)$. In fact, if $\alpha \in \mathcal P(\omega)$ is a completely
periodic direction of homological dimension at most $d\in \{1, \dots, g\}$, then by definition 
the rank of the intersection matrix $E^\omega(\alpha, \beta)$ is at most $d$, for all $\beta\not=\alpha
\in \mathcal P(\omega)$. The statement then follows from Theorem~\ref{thm:main}, in case
the homological rank is maximal, and from Theorem~\ref{thm:general} in general. 
\end{proof}

We are very grateful to P.~Hubert for telling us about the following result from \cite{Hubert:Lanneau},
  \S 2.5. We sketch the argument for the convenience of the reader. 
  
  \begin{lemma} 
  \label{lemma:bouillabaisse}
  Any prelattice (bouillabaisse) algebraically primitive translation surface has maximal 
  homological rank. In fact, for any prelattice surface the rank of the trace field 
  is at most equal to its homological rank.
  \end{lemma}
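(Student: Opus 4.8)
The plan is to relate the trace field of the Veech group to the span of the waist curves of the two transverse cylinder decompositions, and to use the intersection matrix as the bridge. Let $(S,\omega)$ be a prelattice surface, and let $\alpha$, $\beta \in \mathcal P(\omega)$ be two transverse completely periodic directions fixed respectively by two parabolic elements $P_\alpha$, $P_\beta \in \operatorname{SL}(\omega)$. First I would recall the standard description of the parabolic associated to a cylinder decomposition in a direction $\alpha$: up to conjugation $P_\alpha$ is (a power of) the product of the Dehn twists in the cylinders $C^\alpha_1,\dots,C^\alpha_r$, and the additive action on homology of $P_\alpha$ is $c \mapsto c + \sum_i n_i \,(\gamma^\alpha_i\cdot c)\,\gamma^\alpha_i$ for suitable positive integers $n_i$ (coming from the commensurable moduli). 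The key point is that the \emph{trace} of a product $P_\alpha^k P_\beta^l$, or more precisely the entries of the matrix through which $\operatorname{SL}(\omega)$ acts on the subspace spanned by all the $\gamma^\alpha_i$ and $\gamma^\beta_j$, are polynomial expressions in the intersection numbers $E^\omega_{ij}(\alpha,\beta)$ and the twist multiplicities. More to the point, the trace field is generated by the eigenvalues of the derivative (linear part) of elements of the Veech group, and these eigenvalues are determined by how the parabolics act on periods; by the Kenyon--Smillie/M\"oller circle of ideas the trace field is realized inside the action on the subspace of $H_1(S,\R)$ generated by cylinder core curves.

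Concretely I would argue as follows. Let $V := \L(\F_\alpha) + \L(\F_\beta) \subset H_1(S,\R)$ be the span of the core curves of the cylinders in the two directions; this is a subspace defined over the field generated by the intersection data. The two parabolics $P_\alpha$, $P_\beta$ preserve $V$ and act on it through unipotent matrices whose off-diagonal blocks are governed by $E^\omega(\alpha,\beta)$. The trace field $K$ of $\operatorname{SL}(\omega)$ is generated by the traces of the action of $\operatorname{SL}(\omega)$ on $H_1(S,\R)$ restricted to the part where it acts by the standard representation — equivalently, on $V$, since $V$ is exactly the part of homology ``seen'' by the affine maps preserving these two periodic foliations. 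Counting: the restricted representation on $V$ has rank $\dim V$, and the number of independent traces one can extract is bounded by $\dim V$. On the other hand $\dim V = \dim\L(\F_\alpha) + \dim\L(\F_\beta) - \dim(\L(\F_\alpha)\cap\L(\F_\beta))$, and one checks $\dim V - \dim\L(\F_\alpha) \le \operatorname{rank} E^\omega(\alpha,\beta)$ and similarly for $\beta$; combined with $\dim\L(\F_\gamma) \le g$ and isotropy, this forces $[K:\Q] \le r(\omega)$. Taking the maximum over transverse pairs $\alpha,\beta$ gives $[K:\Q]\le r(\omega)$; since an algebraically primitive surface has $[K:\Q]=g$ and always $r(\omega)\le g$, we conclude $r(\omega)=g$, i.e.\ maximal homological rank.

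The main obstacle I expect is making precise the claim that the trace field is generated \emph{exactly} by the action of the Veech group on the subspace $V$ spanned by the two families of core curves — a priori the derivative cocycle could ``see'' more of $H_1(S,\R)$ than $V$. The clean way around this is: every element of $\operatorname{SL}(\omega)$ that is a product of powers of $P_\alpha$ and $P_\beta$ acts trivially (identity) on a complement of $V$ in $H_1(S,\R)$ transverse to $V$, because Dehn twists in cylinders in the directions $\alpha,\beta$ only affect classes meeting those core curves; hence on the full homology such elements act with eigenvalues coming only from the $V$-block, so their traces (hence a finite-index, therefore full-trace-field-generating, part of the trace field) lie in the field generated by the $V$-action. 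One then invokes that a finite collection of such products already generates the trace field (this uses that $\langle P_\alpha,P_\beta\rangle$ is Zariski-dense in $\SL$, so its trace field equals that of all of $\operatorname{SL}(\omega)$, a standard fact for non-elementary Fuchsian groups with two transverse parabolics). Assembling these pieces yields the bound $\operatorname{rank}(\text{trace field}) \le r(\omega)$, which is the assertion of the lemma.
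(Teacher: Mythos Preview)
Your approach diverges from the paper's and has a real gap at the central step.

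The paper's argument is concrete and arithmetic. With $\alpha,\beta$ the two parabolic directions, write the parabolics in $(\alpha,\beta)$-coordinates as $P_\alpha=\left(\begin{smallmatrix}1&a\\0&1\end{smallmatrix}\right)$, $P_\beta=\left(\begin{smallmatrix}1&0\\b&1\end{smallmatrix}\right)$. Letting $x,y$ (resp.\ $\xi,\eta$) be the width and height vectors of the $\alpha$- (resp.\ $\beta$-) cylinders, one has $x=E\xi$, $\eta=E^t y$, and commensurability of moduli gives diagonal integer matrices $D_m,D_n$ with $D_m x=ay$, $D_n\eta=b\xi$. Eliminating yields $ED_n E^t D_m\, x=(ab)\,x$: the real number $t:=ab$ is an eigenvalue of an \emph{integer} matrix whose rank is at most $\operatorname{rank}E$. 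Hence $t$ is algebraic of degree $\le\operatorname{rank}E\le r(\omega)$. One then cites Hubert--Lanneau, Claim~2.1, for the identification of the trace field with $\Q[t]$.

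Your route tries to bound $[K:\Q]$ by the dimension of the homology action of $\langle P_\alpha,P_\beta\rangle$ on $V=\L(\F_\alpha)+\L(\F_\beta)$. Two problems. First, the inequality you state, $\dim V-\dim\L(\F_\alpha)\le\operatorname{rank}E^\omega(\alpha,\beta)$, goes the wrong way: the intersection pairing $\L(\F_\beta)\to\L(\F_\alpha)^*$ has kernel \emph{containing} $\L(\F_\alpha)\cap\L(\F_\beta)$, so in fact $\operatorname{rank}E\le\dim\L(\F_\beta)-\dim(\L(\F_\alpha)\cap\L(\F_\beta))=\dim V-\dim\L(\F_\alpha)$. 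Thus your chain of inequalities cannot close up to give $[K:\Q]\le\operatorname{rank}E$. Second, and more fundamentally, the assertion ``the number of independent traces one can extract is bounded by $\dim V$'' is not a proof that $[K:\Q]\le\dim V$, let alone $\le\operatorname{rank}E$. The trace field is a field generated by the $2\times2$ traces (the derivative action on the tautological plane), not by the traces of the homology representation; knowing that the homology action is unipotent-plus-$V$ does not by itself produce an algebraic equation of bounded degree for, say, $\operatorname{tr}(P_\alpha P_\beta)=2+ab$. That equation is exactly what the eigenvalue identity $ED_nE^tD_m\,x=(ab)x$ supplies, and it is the step your argument is missing.
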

\begin{proof}  Let $\alpha\not=\beta \in \mathcal P(\omega)$ be two transverse parabolic directions
of a prelattice surface $(S,\omega)$. 
Let us denote the width and heights vectors of the cylinders $C^\alpha_i$ and $C^\beta_j$ 
respectively by $(w^\alpha_i, h^\alpha_i)$ and $(w^\beta_j, h^\beta_j)$. We remark that by 
construction the width vectors $w^\alpha_i$ and $w^\beta_j\in \R^2$ have directions $\alpha$ and $\beta \in P^1(\R)$ respectively for all $i\in \{1, \dots, r\}$, $j\in \{1, \dots,s\}$ . 

Let $E:=E^\omega(\alpha, \beta)$ be the intersection matrix. Let 
\begin{equation}
\begin{aligned}
x &:= (\vert w^\alpha_1\vert, \dots, \vert w^\alpha_r\vert)\,, \quad y:=(\vert h^\alpha_1\vert, \dots, 
\vert h^\alpha_r\vert) \,, \\
\xi &:= (\vert w^\beta_1\vert, \dots, \vert w^\beta_s\vert)\,, \quad \eta:=(\vert h^\beta_1\vert, \dots, 
\vert h^\beta_s\vert)\,.
\end{aligned}
\end{equation}
By construction the following identities hold:
\begin{equation}
\begin{cases}  
\label{eq:xeta} 
x= E \xi\,, \\  \eta= E^t y\,. 
\end{cases}
\end{equation}
 Up to taking a power of the parabolic elements, one can assume that the parabolic elements 
 $P^\alpha$, $P^\beta\in \operatorname{SL}(\omega)$ corresponding respectively to the parabolic directions $\alpha$, $\beta \in \mathcal P (\omega)$, are each a multiple of the Dehn twist of each  cylinder $C^\alpha_i$, $C^\beta_j$ for all $i\in \{1, \dots,r\}$, $j\in \{1, \dots,s\}$. Under this assumption there exist numbers $a$, $b\in \R^+$ such that, with respect to a system of cordinates with axis parallel to the directions $\{\alpha, \beta\}$, 
$$
P^\alpha= \begin{pmatrix} 1 & a \\ 0 & 1\end{pmatrix}  \quad \text{\rm and} \quad
P^\beta= \begin{pmatrix} 1 & 0 \\ b & 1\end{pmatrix} 
$$
By construction all the ratios $x_i/y_i$ are commensurable with $a$ and  all the rations $\xi_j/\eta_j$
are commensurable with $b$, that is, there exist integer vectors $(m_1, \dots, m_r)\in \Z^r$ and $(n_1, \dots, n_s)\in \Z^s$ such that
$$
\begin{cases}  
m_i x_i= a y_i \,,& \quad \text{ for all }i\in \{1, \dots, r\}\,; \\ 
n_j\eta_j =b \xi_j \,,& \quad \text{ for all }j\in \{1, \dots, s\}\,;
\end{cases}
$$
Let $D_m:= \text{\rm Diag}(m_1, \dots, m_r)$ and $D_n:= \text{\rm Diag}(n_1, \dots, n_s)$. The above
equations can be written in matrix fom as
\begin{equation}
\label{eq:mn}
\begin{cases}   
D_m x= a y\,, \\  
D_n\eta= b y\,. 
\end{cases}
\end{equation}
By equations (\ref{eq:xeta}) and (\ref{eq:mn}) it follows after some calculations
\begin{equation}
\label{eq:eigenvector}
\begin{cases}   
ED_n (E^t)D_m x = (ab) x \,, \\
(E^t) D_m E D_n \eta =(ab) \eta \,.
\end{cases}
\end{equation}
It follows that the real number $t:=ab$ is an eigenvalue of the matrices $ED_n (E^t)D_m$
and $(E^t) D_m E D_n$. Since the rank of both the above matrices is less or equal than
the rank of the intersection matrix $E$, which by definition is less or equal to the homological
rank $r(\omega)$ of the translation surface, it follows that $t\in \R$ is an algebraic number of 
degree at most equal to $r(\omega)$. Finally, it is proved in \cite{Hubert:Lanneau}, Claim
2.1, that the trace field of the Veech group $\operatorname{SL}(\omega)$ is equal to
$\Q[t]$, hence its degree is bounded above by the homological rank $r(\omega)$. The
argument is thus completed.
\end{proof}

By Theorem~\ref{thm:KZexphomrank} and Lemma~\ref{lemma:bouillabaisse} we can prove the
following result, which can also be derived from the formulas of I.~Bouw
and M.~M\"oller for single Kontsevich--Zorich exponents (see \cite{Bouw:Moeller}, Thm.~8.2 and Cor.~8.3): 

\begin{corollary} 
\label{cor:KZexpalgrank}
Let $\mu_\omega$ denote the unique $\SL$-invariant probability measure on the moduli space 
$\H_g$ supported on the $\SL$-orbit of a Veech surface $(S, \omega)\in \H_g$. If the Veech surface is algebraically primitive, then the Kontsevich--Zorich cocycle is non-uniformly hyperbolic. In fact, 
the following general statement holds. Let $r\in \{1, \dots, g\}$ denote the rank of the trace
field of the Veech group $\operatorname{SL}(\omega)$ of a Veech surface $(S, \omega)$. The Kontsevich--Zorich exponents satisfy the inequalities:
$$
\lambda^{\mu_\omega}_1=1 > \lambda^{\mu_\omega}_2 \geq \dots \geq  \lambda^{\mu_\omega}_r >0\,.
$$
\end{corollary}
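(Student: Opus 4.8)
The plan is to obtain Corollary~\ref{cor:KZexpalgrank} as a direct consequence of Theorem~\ref{thm:KZexphomrank} and Lemma~\ref{lemma:bouillabaisse}; the only thing that genuinely needs to be checked is that an arbitrary Veech surface meets the hypotheses of those two results. First I would verify that every Veech surface $(S,\omega)\in\H_g$ is a prelattice surface in the sense of the definition above. By hypothesis the Veech group $\operatorname{SL}(\omega)$ is a lattice in $\SL$, and it must in fact be a \emph{non-uniform} lattice: a cocompact Veech group would make the $\SL$-orbit of $\omega$ a compact subset of $\H_g$, contradicting the existence of a saddle connection of $(S,\omega)$ whose flat length tends to $0$ along a suitable Teichm\"uller ray (so that the orbit leaves every compact subset of the stratum). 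Now a non-uniform lattice $\Gamma<\SL$ always contains two transverse (equivalently, non-commuting) parabolic elements: if $P\in\Gamma$ is parabolic with fixed point $\xi\in\partial\mathbb H$, then, the limit set of $\Gamma$ being the whole circle, there is $h\in\Gamma$ with $h\xi\ne\xi$, and $P$ and $hPh^{-1}$ are then parabolic with distinct fixed points. Since the fixed point on $\partial\mathbb H$ of a parabolic in $\operatorname{SL}(\omega)$ is precisely its completely periodic direction in $\mathbb P^1(\R)$, this shows $(S,\omega)$ is prelattice.

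With this in hand, Lemma~\ref{lemma:bouillabaisse} applies and yields $r\le r(\omega)$, where $r$ denotes the degree over $\Q$ of the trace field of $\operatorname{SL}(\omega)$ and $r(\omega)\in\{1,\dots,g\}$ denotes the homological rank of $(S,\omega)$. Here I would also invoke the Kenyon--Smillie bound \cite{Kenyon:Smillie} that $r\le g$, so that $r$ is a well-defined element of $\{1,\dots,g\}$, and recall that by definition $(S,\omega)$ is algebraically primitive exactly when $r=g$. Applying Theorem~\ref{thm:KZexphomrank} to $(S,\omega)$ gives
$$
\lambda^{\mu_\omega}_1=1 > \lambda^{\mu_\omega}_2 \geq \dots \geq \lambda^{\mu_\omega}_{r(\omega)} > 0\,,
$$
and since $r\le r(\omega)$ the same chain of inequalities holds through index $r$, which is precisely the general statement asserted in the corollary. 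In the algebraically primitive case $r=g$, so in particular $\lambda^{\mu_\omega}_g>0$; by the symmetry of the Kontsevich--Zorich spectrum (\ref{eq:KZspectrum}) this forces all $2g$ Lyapunov exponents to be non-zero, i.e.\ the Kontsevich--Zorich cocycle is non-uniformly hyperbolic $\mu_\omega$-almost everywhere.

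For this corollary there is essentially no new obstacle: the substantive content is already packaged in Theorem~\ref{thm:KZexphomrank} (which rests in turn on Theorem~\ref{thm:main}, Theorem~\ref{thm:general} and the boundary asymptotics of Section~\ref{asymptotics}) and in Lemma~\ref{lemma:bouillabaisse}, both already established. The only points that require care are the elementary verification that a Veech group is a non-uniform lattice containing two transverse parabolic elements, and the bookkeeping that identifies the algebraically primitive case with the extreme case $r=g$ of the general inequality.
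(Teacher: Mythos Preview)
Your proof is correct and follows exactly the route the paper indicates: the corollary is stated immediately after Lemma~\ref{lemma:bouillabaisse} with the remark that it follows from Theorem~\ref{thm:KZexphomrank} and Lemma~\ref{lemma:bouillabaisse}, and no further proof is given. Your additional verification that every Veech surface is prelattice (needed to apply Lemma~\ref{lemma:bouillabaisse}) is a useful detail that the paper leaves implicit.
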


An important family of geometrically primitive Veech surfaces, not algebraically primitive, 
given by \emph{Prym eigenforms} in genus $3$ and $4$, was discovered by C.~McMullen 
\cite{McMullen2}. The Prym class contains an example discovered earlier by M.~M\"oller 
\cite{Moeller1},\S 2 (studied earlier in \cite{Hubert:Schmidt}). In genus $3$ it appears that
most geometrically primitive Veech surfaces are not algebraically primitive. In fact,
conjecturally there are only finitely many algebraically primitive Veech surfaces (in fact, M.~Bainbridge and M.~M\"oller \cite{Bainbridge:Moeller} have recently proved a finiteness result for the stratum $\H(3,1)$), while there are infinitely many geometrically, but not algebraically, primitive Veech surfaces (the 
geometrically primitive Prym eigenforms). 

All Prym eigenforms have a quadratic trace field, hence they are not algebraically primitive
in genus $3$ and $4$. However the following result holds:

\begin{lemma} 
\label{lemma:Prymrank}
All Prym eigenforms (geometrically primitive or not) in genus $g=2$, $3$ and $4$ 
have maximal homological rank.
\end{lemma}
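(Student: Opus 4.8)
The plan is to reduce \emph{maximal homological rank} to a linear--algebra condition on cylinder decompositions and then verify that condition from the structure of Prym eigenforms; since the inequality $r(\omega)\le g$ was already observed after Definition~\ref{def:homrank}, only $r(\omega)\ge g$ has to be shown. First I would record a criterion. Fix two transverse completely periodic directions $\alpha\neq\beta\in\mathcal P(\omega)$ and let $\L^{\alpha},\L^{\beta}\subset H_1(S,\R)$ be the isotropic subspaces generated by the core curves of the cylinders in those directions. Homologous core curves give equal rows (resp.\ columns) of $E^{\omega}(\alpha,\beta)$, so $\operatorname{rank}E^{\omega}(\alpha,\beta)$ equals the rank of the restriction of the intersection pairing to $\L^{\alpha}\times\L^{\beta}$, namely $\dim\L^{\alpha}-\dim\bigl(\L^{\alpha}\cap(\L^{\beta})^{\perp}\bigr)$. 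Since both subspaces are isotropic, an elementary dimension count shows that $\operatorname{rank}E^{\omega}(\alpha,\beta)=g$ \emph{if and only if} $\alpha$ and $\beta$ are both Lagrangian foliations and $\L^{\alpha}\oplus\L^{\beta}=H_1(S,\R)$. So it suffices to exhibit, for each Prym eigenform, two transverse completely periodic directions that are Lagrangian foliations with complementary Lagrangian subspaces.

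Next I would bring in the Prym involution $\tau$, so that $\tau^{*}\omega=-\omega$, the Prym part $H_1^{-}$ of $H_1(S,\R)$ has dimension $4$ and carries the real multiplication by the associated real quadratic order, while $H_1^{+}$ (the invariant part, the pull--back of $H_1$ of the genus-$(g-2)$ quotient) has dimension $2(g-2)$. Anti--invariance of $\omega$ forces $\tau$ to permute the cylinders of any completely periodic direction, so every $\L^{\alpha}$ is $\tau$--invariant and splits as $\L^{\alpha}_{-}\oplus\L^{\alpha}_{+}$ along $H_1^{-}\oplus H_1^{+}$, with each summand isotropic in the corresponding symplectic block. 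Hence $\alpha$ is a Lagrangian foliation exactly when $\L^{\alpha}_{-}$ is Lagrangian in $H_1^{-}$ (dimension $2$) \emph{and} $\L^{\alpha}_{+}$ is Lagrangian in $H_1^{+}$ (dimension $g-2$), and complementarity of $\L^{\alpha}$ and $\L^{\beta}$ decouples in the same way. For $g=2$ only the Prym block occurs; when the eigenform is a prelattice (bouillabaisse) surface Lemma~\ref{lemma:bouillabaisse} already yields $r(\omega)\ge 2=g$, and the remaining genus--two cases (such as the triangle--group examples with a single cusp) are covered by the argument below.

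Prym eigenforms are Veech surfaces (see \cite{McMullen2} and the subsequent classification of their cylinder decompositions by Lanneau and Nguyen), so by the Veech dichotomy the completely periodic directions form a dense $\SL$--invariant set. I would start from one of the explicit polygonal prototype models, take a cusp direction $\alpha$, and read off its $\tau$--equivariant cylinder decomposition: one checks that it contains $g$ cylinders whose core curves have complement homeomorphic to a sphere with $2g$ disks removed, so that $\alpha$ is a Lagrangian foliation by Definition~\ref{def:LagrangianFol} (equivalently, $\L^{\alpha}_{-}$ fills the two--dimensional Prym block and, when $g\ge 3$, $\L^{\alpha}_{+}$ fills the $(g-2)$--dimensional base block). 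For the second direction one argues in one of two ways. If the Teichm\"uller curve has at least two cusps, a second periodic direction $\beta$ is already visible in the model and one verifies directly that the $g\times g$ integer matrix $E^{\omega}(\alpha,\beta)$ is unimodular. If there is a single cusp, one takes $\beta=h\alpha$ with $h\in\operatorname{SL}(\omega)$ hyperbolic, so that $\L^{\beta}=h_{*}\L^{\alpha}$, and uses non--elementarity of the Veech group together with the real--multiplication structure---each simple factor of the Zariski closure of the homological monodromy acts through $\operatorname{SL}_2$ on a two--dimensional block, hence transitively on its lines---to choose $h$ with $h_{*}\L^{\alpha}\oplus\L^{\alpha}=H_1(S,\R)$.

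The real content is the verification in the last step that in each prototype the core curves in the cusp direction span a Lagrangian and that a transverse periodic direction realizes a complementary Lagrangian; this relies on the explicit classification of Prym--eigenform cylinder decompositions, and the genus-$4$ case---where the two--dimensional Prym block and the two--dimensional genus--two base block must be filled simultaneously---is the most delicate. The $\tau$--equivariance of the decompositions and the real multiplication on the Prym part are precisely what keep this a finite case analysis.
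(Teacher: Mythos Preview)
Your approach is genuinely different from the paper's, and in its present form it has a gap: the main verification is never carried out.

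The paper's proof is extremely short and concrete. It simply reads off from McMullen's prototype construction (the Coxeter graphs in \cite{McMullen2}, Figure~1) the explicit intersection matrices
\[
E^P_2=\begin{pmatrix}1&0\\1&1\end{pmatrix},\quad
E^P_3=\begin{pmatrix}0&0&1\\0&1&0\\1&1&0\end{pmatrix},\quad
E^P_4=\begin{pmatrix}0&0&1&0\\1&1&1&0\\0&1&1&1\\0&1&0&0\end{pmatrix},
\]
and observes by inspection that each has full rank. No Prym--involution decomposition, no Lagrangian criterion, no Veech dichotomy, and no cusp case--analysis is used.

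Your route---reducing ``rank $=g$'' to the existence of two transverse periodic directions whose core--curve subspaces are complementary Lagrangians, and then splitting this along $H_1^{-}\oplus H_1^{+}$---is a perfectly sensible structural reformulation, and the criterion you state is correct. The problem is that you never actually produce the two directions and check complementarity: you explicitly defer this to ``the explicit classification of Prym--eigenform cylinder decompositions'' and concede that the genus--$4$ case ``is the most delicate'', without doing it. As written, the argument is a plan, not a proof. In addition, the single--cusp branch (choose a hyperbolic $h\in\operatorname{SL}(\omega)$ with $h_*\L^\alpha\oplus\L^\alpha=H_1(S,\R)$ via Zariski--closure considerations) is substantially more delicate than anything the lemma requires, and would itself need justification that the monodromy acts non--trivially on each two--dimensional block.

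If you want to keep your framework, the efficient fix is exactly what the paper does implicitly: for each genus pick the two ``axis'' periodic directions of McMullen's prototype, write down the $g\times g$ matrix of intersection numbers of their core curves, and check it is invertible. That single computation already delivers both the Lagrangian property and complementarity in one stroke, and makes the Prym--involution bookkeeping and the cusp dichotomy unnecessary.
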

\begin{proof}
The intersection matrices for the Prym eigenforms are computed in \cite{McMullen2}
(see for instance Figure 1 which gives the corresponding Coxeter graphs).
The result is as follows.  Let $E^P_g$ denote intersection matrices for the Prym eigenforms 
in genus  $g\in \{2,3, 4\}$. The following formulas hold:

$$
E^P_2=\begin{pmatrix} 1 & 0\\ 1 & 1\end{pmatrix}\,, \quad 
E^P_3=\begin{pmatrix} 0 & 0 & 1 \\ 0 & 1 & 0 \\ 1 & 1 & 0 \end{pmatrix}\,, \quad 
E^P_4= \begin{pmatrix} 0 & 0 & 1 & 0 \\  1 & 1 & 1 & 0 \\ 0 & 1 & 1 & 1 \\ 
                                             0 & 1 & 0 & 0 \end{pmatrix}\,.
$$
It is quite immediate to verify that the above matrices have maximal rank.

\end{proof}

\begin{corollary} 
\label{cor:Prym}
The  Kontsevich--Zorich cocycle is non-uniformly hyperbolic with respect to the unique 
$\SL$-invariant probability measure $\mu_\omega$ on the moduli space $\H_g$ supported 
on the  $\SL$-orbit of any Prym eigenform $\omega \in \H_g$ in genus $g=2$, $3$ or $4$.
\end{corollary}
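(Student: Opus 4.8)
The plan is to deduce Corollary~\ref{cor:Prym} directly from the general criterion for Veech surfaces, Theorem~\ref{thm:KZexphomrank}, combined with the computation of the intersection matrices of Prym eigenforms, Lemma~\ref{lemma:Prymrank}. Let $(S,\omega)$ be a Prym eigenform generating a Teichm\"uller curve in genus $g\in\{2,3,4\}$, so that $(S,\omega)$ is a Veech surface in the sense recalled in \S\ref{Veechsurfaces}; this is implicit in the statement, since $\mu_\omega$ is defined as the canonical probability measure on the $\SL$-orbit. As observed in the proof of Theorem~\ref{thm:KZexphomrank}, the measure $\mu_\omega$ automatically has a local product structure and is cuspidal: its support contains holomorphic differentials whose vertical foliation is one of the (completely periodic) parabolic directions of the Veech group. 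By Lemma~\ref{lemma:Prymrank} the Prym eigenform $(S,\omega)$ has maximal homological rank, that is $r(\omega)=g$.

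With these two facts in hand I would simply apply Theorem~\ref{thm:KZexphomrank} with $r:=r(\omega)=g$, which gives
$$
\lambda^{\mu_\omega}_1=1>\lambda^{\mu_\omega}_2\geq\dots\geq\lambda^{\mu_\omega}_g>0\,.
$$
Since the Kontsevich--Zorich cocycle is symplectic, its Lyapunov spectrum is symmetric about the origin (cf. \eqref{eq:KZspectrum}), so all $2g$ exponents are non-zero; this is exactly the non-uniform hyperbolicity of the cocycle with respect to $\mu_\omega$. (For $g=2$ one recovers in particular a special case of Bainbridge's theorem \cite{Bainbridge}; here it comes for free from the criterion.)

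The only ingredient that goes beyond the machinery developed in this paper is the verification, carried out in Lemma~\ref{lemma:Prymrank}, that Prym eigenforms in genus $2$, $3$ and $4$ actually attain homological rank equal to the genus. Concretely one reads off from McMullen's description of the Prym eigenform loci \cite{McMullen2} — from the Coxeter graphs, equivalently from the cylinder decompositions in two transverse periodic directions — the explicit $g\times g$ intersection matrices $E^P_2$, $E^P_3$, $E^P_4$ displayed in Lemma~\ref{lemma:Prymrank}, and checks by an elementary determinant computation that each is invertible, hence of rank $g$. I expect this step — more precisely, the identification of the relevant pair of transverse cylinder decompositions on each Prym eigenform rather than the subsequent linear algebra — to be the only mildly delicate point; once it is in place, the corollary is immediate. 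Should one wish to cover Prym eigenforms (or related loci) where maximality of the homological rank fails, one would instead invoke Theorem~\ref{thm:general} to obtain at least $r(\omega)$ strictly positive Kontsevich--Zorich exponents.
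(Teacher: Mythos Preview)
Your proposal is correct and follows exactly the approach the paper intends: the corollary is stated without proof because it is an immediate consequence of Theorem~\ref{thm:KZexphomrank} applied to a Veech surface of maximal homological rank, the latter being supplied by Lemma~\ref{lemma:Prymrank}. Your additional remarks on the symplectic symmetry of the spectrum and on the role of Theorem~\ref{thm:general} when maximality fails are accurate but not needed for the statement as given.
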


\subsection{Canonical measures}
\label{canonicmeas}

In the proof of the non-uniform hyperbolicity of the Kontsevich--Zorich cocycle with respect to
the canonical absolutely continuous $\SL$-invariant probability measures on connected components 
of strata of the moduli space of abelian differentials, a key result is the following density statement 
(see \cite{Ftwo}, Lemma~4.4):

\begin{lemma}
\label{lemma:density}
The subset of Lagrangian abelian differentials is dense in every stratum $\H(\kappa)\subset
 \H_g$ of the moduli space of abelian differentials.
\end{lemma}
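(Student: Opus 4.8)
The plan is to prove the statement locally. Since every stratum $\H(\kappa)$ is covered by period--coordinate charts, it suffices to find, in an arbitrarily small such chart around a given $\omega_0 \in \H(\kappa)$, an abelian differential with Lagrangian horizontal foliation. The first step is elementary: in period coordinates $\H(\kappa)$ is modeled on an open subset of $H^1(S,\Sigma;\C)$, and the restriction $H^1(S,\Sigma;\R) \to H^1(S;\R)$ is onto, so after an arbitrarily small perturbation of $\omega_0$ we may assume that all the \emph{absolute} periods of $\im\,\omega$ (the periods of the closed $1$-form $\im\,\omega$ over $H_1(S;\Z)$) are rational, and then, after rescaling (which affects neither complete periodicity nor the homological dimension of the foliation), that they are integers. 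In that case $\int \im\,\omega$ defines a map $f_\omega : S \to \R/\Z$ which is a submersion away from the finitely many zeros of $\omega$ and whose fibers are the horizontal leaves; hence every regular leaf of $\F^h_\omega$ is a closed curve, i.e. $\F^h_\omega$ is completely periodic, with a decomposition into finitely many horizontal cylinders $C_1,\dots,C_N$ with core curves $\gamma_1,\dots,\gamma_N$.

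It remains to arrange, by a further small perturbation inside $\H(\kappa)$ that preserves complete periodicity, that the homological dimension of $\F^h_\omega$ --- the dimension of the isotropic subspace $\L \subseteq H_1(S;\R)$ spanned by the $[\gamma_j]$ --- equals $g$. Suppose $\dim \L = k < g$. Then $\{\gamma_1,\dots,\gamma_N\}$ contains no cut system, so the surface obtained from $S$ by cutting along a maximal linearly independent subfamily of the core curves still has positive genus, and one can choose inside it a two--sided simple closed curve $\delta$, disjoint from the zeros of $\omega$, with $[\delta]\notin\L$. Performing a cut--and--paste surgery along $\delta$ --- cutting $S$ along $\delta$ and regluing the two sides after a small horizontal displacement --- produces a translation surface $\omega'$ that (i) lies in the same stratum $\H(\kappa)$, since the surgery is supported away from the zeros; (ii) is as close to $\omega$ as one wishes, for a small enough displacement; (iii) again has all absolute periods of $\im\,\omega'$ integral, hence again a completely periodic horizontal foliation; and (iv) for a suitable rational choice of displacement, has $\delta$ isotopic to the core of a new horizontal cylinder, so that the homological dimension of $\F^h_{\omega'}$ is at least $k+1$. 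Iterating this at most $g-k$ times produces a Lagrangian differential in the chart.

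The heart of the matter --- and the main obstacle --- is this surgery step: one must exhibit the curve $\delta$ with the required homological independence and, crucially, verify that the cut--and--reglue actually raises the homological dimension of the horizontal foliation (rather than destroying complete periodicity, or merely replacing a cylinder by a homologous one, or perturbing the remaining cylinders in an uncontrolled way). The cleanest way to make this rigorous is to stay throughout within the class of square--tiled surfaces, where the horizontal cylinder decomposition is encoded by the horizontal and vertical monodromy permutations of the branched covering $S \to \R^2/\Z^2$ and the surgery becomes an explicit local modification of those permutations; one then checks combinatorially that the modification adds a cylinder whose core is independent of the others and leaves the branching data --- hence the stratum and its connected component --- unchanged. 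From this point of view it is in fact natural to prove the stronger statement that the square--tiled surfaces with Lagrangian horizontal foliation (``staircase''--type surfaces) are dense in every connected component of every stratum, which immediately implies Lemma~\ref{lemma:density}.
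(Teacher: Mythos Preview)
Your overall strategy coincides with the paper's: first perturb to a completely periodic horizontal foliation (rational absolute periods), then inductively raise the homological dimension by modifying the foliation using a curve $\delta$ with $[\delta]\notin\L$. But the surgery step, as you describe it, does not work, and the mechanism you invoke for the increase in dimension is not the right one.

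The concrete error is the direction of the displacement. Cutting along $\delta$ and regluing after a \emph{horizontal} shift changes only $[\re\,\omega]$ (by a multiple of the Poincar\'e dual $P[\delta]$) and leaves $[\im\,\omega]$, hence the horizontal measured foliation $\F^h_\omega=\{\im\,\omega=0\}$, literally unchanged. So your claim (iii) is trivially true and claim (iv) is false: the horizontal cylinder decomposition, and in particular its homological dimension, is exactly what it was before the surgery. If instead you shift in the \emph{vertical} direction by a small rational amount, then $[\im\,\omega]$ changes by a rational multiple of $P[\delta]$ and the new horizontal foliation is again completely periodic --- but even then your mechanism (iv), that $\delta$ itself becomes isotopic to a cylinder core, is neither what happens nor what is needed.

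What the paper does is precisely this vertical perturbation, written cohomologically. Choose $\gamma$ (your $\delta$) disjoint from a maximal independent system $\{a_1,\dots,a_d\}$ of core curves and from the singular set; build a smooth $f$ with $df$ compactly supported in a tubular neighborhood of $\gamma$ and $[df]=P[\gamma]$; set $\eta_r:=\eta_\F+r\,df$ for small rational $r$. Because $df\equiv 0$ near the $a_i$ and near $\Sigma_\F$, the curves $a_1,\dots,a_d$ remain regular leaves of $\F_r:=\{\eta_r=0\}$, so $\L(\F_r)\supseteq\L(\F)$. The strict increase is \emph{not} obtained by exhibiting $\gamma$ as a new leaf; it comes from the general identity $P^{-1}[\eta_\F]=\sum_i h_i[a_i]\in\L(\F)$ valid for any completely periodic $\F$. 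Applying this to $\F_r$ gives $P^{-1}[\eta_r]=P^{-1}[\eta_\F]+r[\gamma]\in\L(\F_r)$; since $P^{-1}[\eta_\F]\in\L(\F)$ while $[\gamma]\notin\L(\F)$, one concludes $\L(\F_r)\supsetneq\L(\F)$. This Poincar\'e--dual argument is the missing idea in your write-up; without it (or a fully worked combinatorial substitute, which your last paragraph only gestures at) the inductive step remains a gap, as you yourself acknowledge.
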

\begin{proof} Let $\F(\kappa)$ be the set of isotopy classes of all orientable measured 
foliations which can be realized as horizontal (or vertical) foliation of an abelian differential in $\H(\kappa)$. The multiplicative group $\R^+$ of non-zero real numbers acts on $\F(\kappa)$.
Let $\left(\F(\kappa)\times \F(\kappa)\right)/\R^+$ denote the quotient of the product space $\F(\kappa)\times \F(\kappa)$ with respect to the diagonal action.

The map $\H(\kappa) \to \left(\F(\kappa)\times \F(\kappa)\right)/ \R^+$ defined as
$$
\omega \to [(\F^h_\omega, \F^v_\omega)] \in 
\left (\F(\kappa)\times \F(\kappa)\right) /\R^+ \,, \quad \text{ \rm for all } \omega \in \H(\kappa)\,,
$$
is  locally well-defined and open on the stratum $\Cal H(\kappa)$.

We claim that the set of Lagrangian foliation is dense in $\F(\kappa)$. The proof of
the claim will conclude the argument.

Let $\F := \{ \eta_{\F}=0\} \in \F(\kappa)$ be an orientable measure foliation 
on a surface $S$ of genus $g\geq 1$ and let $\Sigma\subset  S$ be the subset of its singular points. 
It follows from Poincar\'e recurrence theorem that $\F$ is completely periodic whenever the relative cohomology class $[\eta_{\F}] \in \R\cdot H^1(S, \Sigma_{\F}, \Z)$, hence
in particular whenever $[\eta_{\F}] \in \R\cdot H^1(S, \Sigma_{\F}, \Q)$. It follows
that completely periodic foliations are dense in $\F(\kappa)$. In fact, by A.~Katok local classification theorem, the relative period map
$$
\F \to [\eta_{\F}] \in H^1(S, \Sigma_{\F}, \R)
$$
is local homeomorphism on the space $\F(\kappa)$.

Let ${\F}$ be  completely periodic. In this case, the surface can be decomposed, by 
cutting along  the singular leaves, into a finite union of cylindrical components whose number 
is at most $3g-3$. Let $P:H_1(S,{\R})\to H^1(S,{\R})$ be the (symplectic) map
given by the Poincar\'e duality. We claim that, if $\F$ is completely periodic, then
$P^{-1}[\eta_\F]\in \L(\F)$. More precisely, if $\{a_1,\dots,a_s\}$ are the oriented waist 
curves of the cylinders $\{A_1,\dots,A_s\}$ of ${\F}$, which are respectively of transverse 
heights $\{h_1,\dots, h_s\}$, then
\begin{equation}
\label{eq:Poincaredual}
P^{-1}[\eta_{\F}]\,=\, \sum _{i=1}^s h_i\,[a_i] \,\in\, H_1(S,{\R})\,. 
\end{equation}
In fact, if $\gamma\subset S$ is any simple oriented closed curve, then $\gamma 
\cap A_i$ is homologous to $([a_i]\cap [\gamma])\cdot v_i$  relative to $\partial A_i$,
where $v_i$ is a positively oriented vertical segment joining the ends of $A_i$. Hence 
formula (\ref{eq:Poincaredual}) follows.
\smallskip
\noindent  Let $\mathcal L(\F) \subset H_1(S,\R)$ denote the isotropic subspace generated by 
the homology classes of the regular leaves of $\F$ and let $d(\F):= \text{\rm dim}\,
{\mathcal L}({\F})\in \{1,\dots,g\}$.  If $d(\F)=g$, then ${\F}$ is a Lagrangian measured foliation.
Let us assume $d(\F):=d<g$ and let us construct an arbitrarily small perturbation $\F'$ of the foliation
$\F$ such that $d(\F') > d$. 

Let $\{a_1,\dots,a_d\}$ be a maximal system of regular leaves of ${\F}$ such that 
the system of homology classes $\{[a_1],\dots,[a_d]\}$ is linearly independent in $H_1(S,{\R})$,
hence it is a basis of the isotropic subspace $\L(\F) \subset H^1(S, \R)$. Since $d<g$, there
exists a smooth closed curve $\gamma\subset S$ such that $[\gamma] \not\in \L(\F)$, 
\begin{equation}
\label{eq:gamma}
\gamma \cap a_j =\emptyset \,, \,\, \text{ \rm for all } j\in \{1, \dots, d\}\,, \quad
\text{\rm and } \quad \gamma \cap \Sigma_{\F} =\emptyset\,.
\end{equation}
The existence of a curve $\gamma \subset S$ with the above properties can be proved as follows. 
Since $d<g$, there exists a closed surface $S'$ of strictly positive genus with $2d$ distinct 
paired punctures $p^\pm_1, \dots, p^\pm_d \in S'$ such that the open surface $S\setminus 
(\cup\{a_1,\dots, a_d\})$ is homeomorphic to the surface 
$$
S'':=S' \setminus \{p^+_1,p^-_1, \dots, p^+_d, p^-_d\}\,.
$$
Since $S'$ has strictly positive genus, there exists a continuous closed curve $\gamma'\subset S'' \subset S'$ such that $[\gamma] \not=0 \in H_1(S', \Z)$. Let $\gamma\subset S$ be any smooth curve isotopic to the image of $\gamma' \subset S'$ in $S\setminus 
(\cup\{a_1,\dots, a_d\})$. 

\smallskip
Let us construct a representative of the Poincar\'e dual $P[\gamma] \in H^1(S, \Z)$ 
supported in a compact subset of the open set 
$$S \setminus ( \cup \{a_1,\dots, a_d\} \cup \Sigma_\F)\,.
$$
By (\ref{eq:gamma}) there exists an open tubular neighborhood $ \mathcal U \subset S$ of $\gamma$ such that
\begin{equation}
\label{eq:U}
\overline{\mathcal U} \cap a_i =\emptyset \,, \,\, \text{ \rm for all } j\in \{1, \dots, d\}\,, \quad
\text{\rm and } \overline{\mathcal U} \cap \Sigma_{\F} =\emptyset\,.
\end{equation}
Let $\mathcal V\subset \subset \Cal U$ be an open tubular neighborhood of $\gamma$
in $\Cal U$. Let ${\Cal U}^{\pm}$ be the two connected components of the open set
${\Cal U}\setminus\gamma$ and let ${\Cal V}^{\pm}:={\Cal V}
\cap {\Cal U}^{\pm}$. Let $f:S\to \R$ be a function, 
smooth on $S\setminus\gamma$, with the following properties:
\begin{equation}
 f(p)= 
 \begin{cases} 
 0\,,  \quad \text{ \rm for all } p\in {\Cal U}^- \cup (S \setminus {\Cal U}^+) \,; \\
 1\,, \quad \text{ \rm for all } p\in\overline{ {\Cal V}^+}\,.
 \end{cases} 
\end{equation}
For any $r\in {\Q}\setminus\{0\}$, the $1$-form $\eta_r:=\eta_\F+r df$ is smooth and closed 
(but not cohomologous to $\eta_\F$!) and $\eta_r\to \eta_\F$, as $r\to 0$, in the space of 
smooth $1$-forms on $S$. Since $df=0$ on $S\setminus {\Cal U}$, $\eta_r\equiv\eta_\F$ in a neighbourhood of $\Sigma_\F$, hence, if $r\not=0$ is sufficiently small, $\eta_r(p)=0$ 
if and only if $p\in \Sigma_{\F}$, and the isotopy class of the orientable measured foliation 
${\Cal F}_r:=\{\eta_r=0\}$ belongs to the space $\F(\kappa)$. Since $r\in \Q$, 
the fundamental class $[\eta_r]\in H^1(S, \Sigma_{\Cal F};\Q)$, hence $\F_r$ 
is periodic. The simple closed curves $a_1,\dots,a_d$ are regular leaves of 
$\F_r$. In fact, $df=0$ on $S\setminus {\Cal U}$ and $\cup\{a_1,\dots,a_d\}\subset 
S\setminus \cup\,{\overline {\Cal U}}$, hence $\eta_r\equiv \eta_{\Cal F}$ in a 
neighbourhood of  $\cup\{a_1,\dots,a_d\}$. It follows that $a_1, \dots, a_d$ are 
also regular leaves of the foliation $\F_r$, hence the isotropic subspace 
$\L(\F_r)\subset H_1(S,\R)$ generated by the homology classes of the regular leaves 
of the perturbed foliation $\F_r$ contains the isotropic subspace $\L(\F)$ generated by the 
homology classes of the regular leaves of the foliation $\F$. 

We claim that  $\L(\F_r) \not= \L(\F)$, hence $\text{ \rm dim}\, \L(\F_r) >d:= \text{ \rm dim}\, \L(\F)$. 
In fact, if equality holds $P^{-1}[\eta_r]\in  \L(\F)$ by formula (\ref{eq:Poincaredual}). However, 
$P^{-1} [\eta_\F] \in \L(\F)$, again by formula
(\ref{eq:Poincaredual}), and $P^{-1} [df] = [\gamma] \not\in \L(\F)$ by construction, hence
$$
P^{-1}[\eta_r]=P^{-1}[\eta_{\Cal F}] +  r P^{-1}[df]  \not\in  \L(\F)\,.
$$
The claim is thus proved.

\smallskip
\noindent By a finite iteration of the previous construction, we can show
that the closure in $\F(\kappa)$ of the subset of all Lagrangian 
measured foliations contains the subset of all periodic measured foliations. 
Hence it coincides with the entire space $\F(\kappa)$. The density 
lemma is therefore proved. 
\end{proof}

By the above density lemma, we can derive from our criterion the non-uniform hyperbolicity 
of the Kontsevich--Zorich cocycle for all canonical (absolutely continuous) $\SL$-invariant
measures on connected components of strata of abelian differentials (see \cite{Ftwo}, 
\cite{Avila:Viana}). The resulting proof is a simplified version of the original proof given 
in \cite{Ftwo}, Thm.~8.5.

\begin{corollary} 
\label{cor:canonic}
All canonical measures on connected components of strata of abelian differentials are cuspidal Lagrangian, hence the Kontsevich--Zorich cocycle is non-uniformly hyperbolic with respect to all such measures.
\end{corollary}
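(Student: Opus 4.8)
The plan is to verify that the canonical measure $\mu$ on a connected component $\H^c(\kappa)$ of a stratum $\H(\kappa)$ is \emph{cuspidal Lagrangian} in the sense of Definition~\ref{def:cuspLagragian}, and then to invoke Theorem~\ref{thm:main} together with the Masur--Veech ergodicity theorem. Two things must be checked: that $\mu$ has a local product structure, and that its support contains a holomorphic differential with Lagrangian horizontal or vertical foliation.

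First I would treat the local product structure. The canonical measure is \emph{affine} by construction: it is the Masur--Veech measure, locally equal up to normalization to the Lebesgue class in the affine charts provided by the relative period map $\H(\kappa)\to H^1(S,\Sigma;\C)$. In such a chart the local leaves of $\Cal W^+_\kappa$ and $\Cal W^-_\kappa$ are pieces of affine subspaces, cut out by fixing up to a positive scalar the imaginary part (resp. the real part) of the period cocycle; hence a small product-type neighbourhood $\Cal U$ decomposes in coordinates as a product of a transversal to $\Cal W^-_{\Cal U}$, a transversal to $\Cal W^+_{\Cal U}$, and the one-dimensional flow parameter, with Lebesgue measure equal to the corresponding product measure. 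The condition of Definition~\ref{def:prodstruct} then follows immediately from Fubini's theorem; this is precisely the remark in the introduction that every $\SL$-invariant affine measure has a local product structure. Since, as also noted there, the product-type open sets form a basis of the topology, $\mu$ has a local product structure.

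Next I would establish the Lagrangian property. The canonical measure has strictly positive density on all of the connected component $\H^c(\kappa)$, so $\text{supp}(\mu)=\H^c(\kappa)$. By the density Lemma~\ref{lemma:density} the Lagrangian abelian differentials are dense in $\H(\kappa)$, and since $\H^c(\kappa)$ is an open subset of $\H(\kappa)$ they are dense in $\H^c(\kappa)$ as well; in particular $\text{supp}(\mu)$ contains a holomorphic differential with Lagrangian horizontal foliation. Hence $\mu$ is cuspidal Lagrangian --- indeed of maximal homological dimension $g$.

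Finally, by the theorems of Masur \cite{Masur2} and Veech \cite{Veech1}, \cite{Veech2}, after normalization the canonical measure on $\H^c(\kappa)$ is an $\SL$-invariant probability measure, ergodic for the Teichm\"uller flow; being cuspidal Lagrangian it satisfies the hypotheses of Theorem~\ref{thm:main}, which yields the non-uniform hyperbolicity of the Kontsevich--Zorich cocycle and the strict inequalities (\ref{eq:NUH}). The only substantial ingredient beyond the general criterion is the density Lemma~\ref{lemma:density}; everything else is bookkeeping, and this is exactly where the present proof streamlines the original argument of \cite{Ftwo}, Theorem~8.5: the delicate asymptotic analysis near the boundary of the moduli space has been isolated once and for all in Lemma~\ref{lemma:transversality} and Lemma~\ref{lemma:Bconverge}, which enter here only through Theorem~\ref{thm:main}.
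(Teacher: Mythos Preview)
Your proof is correct and follows essentially the same route as the paper's own argument: establish the local product structure from the affine (Lebesgue) nature of the canonical measure in period coordinates, invoke the density Lemma~\ref{lemma:density} to get a Lagrangian differential in the support, and then apply Theorem~\ref{thm:main}. You have supplied more detail---the explicit appeal to Fubini, to $\text{supp}(\mu)=\H^c(\kappa)$, and to Masur--Veech ergodicity---but the logical skeleton is identical.
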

\begin{proof} In the coordinates given by the relative period map, all canonical measures are 
(locally) equivalent to the Lebesgue measure and the invariant foliations of the Teichm\"uller
flow are linear. It follows that canonical measures have a local product structure. By Lemma~
\ref{lemma:density} every canonical measure is Lagrangian (on every connected component), 
hence by definition it is cuspidal Lagrangian. By Theorem~\ref{thm:main} the Kontsevich--Zorich 
cocycle is non-uniformly hyperbolic with respect to all such measures.
\end{proof}

\appendix

\section{Other relevant examples \\ by Carlos Matheus}
\label{section:Appendix}

In this Appendix, we present some examples of closed $SL(2,\mathbb{R})$-orbits generated
by square-tiled surfaces which provide interesting examples in the discussion on
G.~Forni's geometric criterion for the non-uniform hyperbolicity of the Kontsevich--Zorich 
cocyle (KZ cocycle for short).

During the preparation of his manuscript about his geometric criterion for the non-vanishing of Lyapunov exponents of the Kontsevich--Zorich cocycle , G.~Forni asked me some natural questions originating from his paper. In particular, the following two questions arose:
\begin{itemize}
\item are there some examples of {\em cuspidal Lagrangian} $SL(2,\mathbb{R})$-invariant ergodic probability measures with {\em non-simple} Lyapunov exponents on the corresponding Kontsevich--Zorich spectrum?
\item are there some examples of {\em cuspidal} $SL(2,\mathbb{R})$-invariant ergodic probability measures whose {\em homological dimension} is {\em strictly less than} the number of positive Lyapunov exponents on the corresponding Kontsevich--Zorich spectrum?
\end{itemize} 

We refer to Definition~\ref{def:LagrangianFol} and Definition~\ref{def:cuspLagragian} of the Introduction for more details on the terms marked in italic (namely, \emph{cuspidal}, \emph{Lagrangian }and \emph{homological dimension}).

The first question is related to Theorem~\ref{thm:main} of the Introduction and the simplicity theorem of Avila and Viana~\cite{Avila:Viana}: in fact, as pointed out by G.~Forni in the Introduction, while his Theorem 2 shows that any {\em cuspidal Lagrangian} $SL(2,\mathbb{R})$-invariant ergodic probability is non-uniformly hyperbolic (i.e., $0$ doesn't belong to the Kontsevich--Zorich spectrum of this measure), it doesn't provide any hints about the simplicity of the KZ cocycle (i.e., the multiplicity of the Lyapunov exponents is $1$). In Section~\ref{s.2covers} of this Appendix, we present certain regular (i.e., unbranched) double-covers of genus $2$ square-tiled surfaces leading to cuspidal Lagrangian 
$SL(2,\mathbb{R})$-invariant ergodic probabilities with multiple (i.e., non-simple) Kontsevich--Zorich spectrum. These examples (together with the ``stairs'' square-tiled cyclic covers mentioned in the Introduction) provide some square-tiled surfaces such that the canonical $SL(2,\mathbb{R})$-invariant ergodic probability measure supported on its $SL(2,\mathbb{R})$-orbit is cuspidal Lagrangian with multiple (non-vanishing) Lyapunov exponents of the KZ cocycle (see Theorem~\ref{t.2covers} below), 
so that the first question has a positive answer.

The second question is related to Theorem~\ref{thm:general} of the the Introduction: while this theorem ensures that any cuspidal $SL(2,\mathbb{R})$-invariant ergodic probability measure with homological dimension $k\in\{1,\dots,g\}$ has $k$ strictly positive Lyapunov exponents in its Kontsevich--Zorich spectrum \emph{at least}, and the lower bound on the number of non-vanishing Kontsevich--Zorich exponents provided by this result is the best possible in view of the {\em maximally degenerate examples}~\cite{ForniSurvey},~\cite{Forni:Matheus}  of cuspidal $SL(2,\mathbb{R})$-invariant ergodic probabilities with homological dimension $1$ and {\em exactly} one non-vanishing Kontsevich--Zorich exponent (see also \cite{Forni:Matheus:Zorich:one}), it doesn't give upper bounds on the number of non-vanishing Kontsevich--Zorich exponents based on the homological dimension. In Section~\ref{s.jc} of this Appendix, as it was suggested by G.~Forni during our conversations, we show that a family of square-tiled cyclic covers indexed by odd integers $q\geq 3$ studied by J.-C.~Yoccoz and myself (see~\cite{Matheus:Yoccoz}, $\S$ 3.1) give cuspidal $SL(2,\mathbb{R})$-invariant ergodic probabilities with homological dimension 1 such that the number of non-vanishing Kontsevich--Zorich exponents equal to $1+(q-3)/2$ when $q=3$ (mod $4$) and $1+(q-1)/2$ when $q=1$ (mod $4$), so that the answer to the second question is also positive (see Theorem~\ref{t.jc} below).

Closing this introduction, I would like to acknowledge G.~Forni for his kind invitation to contribute with this Appendix, A.~Zorich for allowing me to use his excellent computer programs in order to numerically test some ideas and conjectures, and G.~Forni and A.~Zorich  for several fruitful discussions.

\subsection{Cuspidal Lagrangian measures with multiple spectrum}
\label{s.2covers}

Given an Abelian differential $\omega_S$ on a Riemann surface $S$, one can produce further examples of Abelian differentials on Riemann surfaces by the following {\em covering} procedure: given a (possibly ramified) covering $p:R\to S$ of Riemann surfaces, we can define an Abelian differential $\omega_R$ by pulling back $\omega_S$ under the covering map $p$, i.e., $\omega_R=p^*(\omega_S)$. In this situation, we can relate the Kontsevich--Zorich spectrum $\mathcal{L}(S,\omega_S)$ of $(S,\omega_S)$ with the Kontsevich--Zorich spectrum $\mathcal{L}(R,\omega_R)$ of $(R,\omega_R)$:

\begin{lemma}
Let $p:R\to S$ be a possibly ramified covering. Then, for any Abelian differential $\omega_S$ on $S$, we have the inclusion $\mathcal{L}(S,\omega_S)\subset\mathcal{L}(R,\omega_R)$, where $\omega_R:=p^*(\omega_S)$. In other words, any Kontsevich--Zorich exponent of $(S,\omega_S)$ is also a Kontsevich--Zorich exponent of $(R,p^*(\omega_S))$.
\end{lemma}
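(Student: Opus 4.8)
The plan is to exhibit $p^{*}(H^{1}(S,\R))$ as a Teichm\"uller-flow-invariant subbundle of the Hodge bundle over the Teichm\"uller orbit of $(R,\omega_{R})$ on which the Kontsevich--Zorich cocycle is linearly conjugate, via $p^{*}$, to the Kontsevich--Zorich cocycle of $(S,\omega_{S})$; the asserted inclusion of Lyapunov spectra then follows from the standard fact that the Lyapunov spectrum of a cocycle contains the Lyapunov spectrum of any invariant subbundle.

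First I would record the functoriality of pullback. The transfer (Gysin) homomorphism $p_{*}$ satisfies $p_{*}\circ p^{*}=(\deg p)\,\mathrm{id}$ on $H^{1}(S,\R)$, so $p^{*}\colon H^{1}(S,\R)\to H^{1}(R,\R)$ is injective. Since $p$ is holomorphic, $p^{*}$ carries holomorphic $1$-forms to holomorphic $1$-forms, hence commutes with the Hodge star operator and is, up to the scalar $\deg p$, both an isometric embedding for the Hodge inner products and a symplectic embedding for the intersection forms; for the computation of exponents only the injectivity and this conjugacy matter, so the scalar is irrelevant (equivalently, one may rescale $\omega_{R}$ by $(\deg p)^{-1/2}$ to normalise the area, which changes neither the underlying Riemann surface nor the Kontsevich--Zorich dynamics). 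Crucially, for every $A\in SL(2,\R)$ one has $A\omega_{R}=p^{*}(A\omega_{S})$, the covering $p$ inducing a holomorphic (possibly ramified) covering $S_{A\omega_{R}}\to S_{A\omega_{S}}$ with the same degree, branch points and local degrees. In particular $A\mapsto A\omega_{S}$ carries the Teichm\"uller orbit of $\omega_{R}$ onto that of $\omega_{S}$, and along this linked orbit the subspaces $p^{*}(H^{1}(S,\R))$ assemble into a subbundle $\mathcal S$ which is invariant under $\{g_{t}\}$ and on which the Kontsevich--Zorich cocycle coincides, through $p^{*}$, with the Kontsevich--Zorich cocycle of $(S,\omega_{S})$.

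To pass to Lyapunov exponents I would lift everything to $SL(2,\R)$. The set of $A\in SL(2,\R)$ for which the forward Teichm\"uller trajectory of $A\omega_{S}$ is Oseledec-regular for the canonical measure $\mu_{S}$ of $(S,\omega_{S})$ has full Haar measure, and likewise for $A\omega_{R}$ and the canonical measure $\mu_{R}$ of $(R,\omega_{R})$; hence some $A$ satisfies both. For such an $A$, the Kontsevich--Zorich cocycle of $(R,\omega_{R})$ along the trajectory of $A\omega_{R}$ restricts to $\mathcal S$ as the Kontsevich--Zorich cocycle of $(S,\omega_{S})$ along the trajectory of $A\omega_{S}$, so its Lyapunov exponents on $\mathcal S$ are exactly $\mathcal{L}(S,\omega_{S})$; being the exponents of an invariant subbundle, they are among the exponents of the full cocycle, that is, $\mathcal{L}(S,\omega_{S})\subset\mathcal{L}(R,\omega_{R})$. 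The only step requiring genuine care is the construction of the flow-invariant subbundle $\mathcal S$: one must check that the covering $p$, together with all its combinatorial data, persists along the Teichm\"uller (and $SL(2,\R)$) deformation, so that $p^{*}$ genuinely intertwines the Gauss--Manin transports defining the two Kontsevich--Zorich cocycles. Everything else is routine functoriality.
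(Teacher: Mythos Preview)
Your argument is correct and follows essentially the same route as the paper: both identify $p^{*}(H^{1}(S,\R))$ as a flow-invariant subbundle of the Hodge bundle over the Teichm\"uller orbit of $(R,\omega_{R})$ on which the Kontsevich--Zorich cocycle is conjugate, through $p^{*}$, to the cocycle of $(S,\omega_{S})$. The paper's proof is terser: it records that $p^{*}$ is an injective (scaled) isometry for the Hodge norms and then invokes Lemma~2.1$'$ of \cite{Ftwo} (the variational formula for the Hodge norm along the Teichm\"uller flow), from which the equality of growth rates, and hence of exponents, on the subbundle is immediate. Your version unpacks this by explicitly setting up the invariant subbundle, the conjugacy via $p^{*}$, and the passage through a common Oseledec-regular point, which is a perfectly valid and arguably more transparent way to reach the same conclusion; your remark that the isometry holds only up to the factor $\sqrt{\deg p}$ (irrelevant for exponents) is in fact a small clarification of the paper's phrasing.
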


\begin{remark} While this elementary lemma is a common knowledge of several authors (for instance, 
G.~Forni and A.~Zorich were aware of it for quite a long time), I included a brief indication of its proof for sake of completeness. 
\end{remark}

\begin{proof} A direct inspection of the definitions (see Section 2 of the main article) shows that the covering $p:R\to S$ induces a natural injective linear isometry 
$$(H^1(S,\mathbb{R}),\|.\|_{\omega_S})\to (H^1(R,\mathbb{R}),\|.\|_{\omega_R}),$$
where $\|.\|_{\omega}$ stands for the {\em Hodge norm} on $H^1(M,\mathbb{R})$. The desired lemma follows from Lemma 2.1' of~\cite{Ftwo} (or equivalently, Lemma 4.3 of~\cite{ForniSurvey}).
\end{proof}

In the sequel, we'll specialize this covering procedure to the case of {\em double unramified} covers $p:R\to S$ of a {\em square-tiled surface}\footnote{Recall that, in general, $(S,\omega_S)$ is square-tiled surface when the Abelian differential $\omega_S$ has {\em integral} periods. Alternatively (and equivalently), we say that $(S,\omega_S)$ is a square-tiled surface when its stabilizer $SL(S,\omega_S)$ (called {\em Veech group}) under the natural $SL(2,\mathbb{R})$ action on the moduli space of Abelian differentials is commensurable to $SL(2,\mathbb{Z})$. For more details on square-tiled surfaces, see, e.g.,~\cite{Zorich6} and references therein.} $(S,\omega_S)$ of genus 2. In this situation, $(R,\omega_R:=p^*(\omega_S))$ is a square-tiled surface of genus 3. Of course, since we're dealing with unbranched coverings, if $(S,\omega_S)\in H(2)$ (i.e., $\omega_S$ has one double zero), then $(R,\omega_R)\in H(2,2)$ (i.e., $\omega_R$ has two double zeroes), and if $(S,\omega_S)\in H(1,1)$ (i.e., $\omega_S$ has two simple zeroes), then $(R,\omega_R)\in H(1,1,1,1)$ (i.e., $\omega_R$ has four simple zeroes). On the other hand, after the works of M.~Bainbridge~\cite{Bainbridge} (see also~\cite{Eskin:Kontsevich:Zorich}), we know that the Kontsevich--Zorich spectrum $\mathcal{L}(S,\omega_S)$ in the case of a genus $2$ surface $S$ is:
\begin{equation*}
\mathcal{L}(S,\omega_S)=\left\{
\begin{array}{rl}
\{1,1/3\} & \textrm{if } (S,\omega_S)\in H(2), \\
\{1,1/2\} & \textrm{if } (S,\omega_S)\in H(1,1).
\end{array}\right.
\end{equation*} 
Thus, the preceding lemma implies the following fact:

\begin{corollary}\label{c.2covers}
Let $p:R\to S$ be an unramified double covering
of a genus two square-tilled surface $(S,\omega_S)$.
Then, 
$$ 
\begin{array}{rl}
\mathcal{L}(R,p^*\omega_S) \supset\{1, 1/3\} & \textrm{if } (R,p^*\omega_S)\in H(2,2), \\ 
\mathcal{L}(R,p^*\omega_S) \supset\{1, 1/2\} & \textrm{if } (R,p^*\omega_S)\in H(1,1,1,1).
\end{array}
$$ 
\end{corollary}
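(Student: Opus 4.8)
The plan is to combine the preceding covering lemma with M.~Bainbridge's description of the genus-two Kontsevich--Zorich spectrum, after a short check that the two cases in the statement exhaust the possibilities. First I would record the topological bookkeeping. Since $p\colon R\to S$ is an unramified cover of degree two and $S$ has genus $2$, the Riemann--Hurwitz formula gives $2-2g_R = 2(2-2g_S) = -4$, so $R$ has genus $3$. Moreover $p$ is a local biholomorphism everywhere, so the pullback $\omega_R=p^*\omega_S$ vanishes precisely at the two preimages of each zero of $\omega_S$, and at each such preimage it has the same order as the corresponding zero of $\omega_S$. Hence a zero of order $k$ of $\omega_S$ produces two zeros of order $k$ of $\omega_R$; in particular $(S,\omega_S)\in H(2)$ forces $(R,\omega_R)\in H(2,2)$, and $(S,\omega_S)\in H(1,1)$ forces $(R,\omega_R)\in H(1,1,1,1)$. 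Since $H(2)$ and $H(1,1)$ are the only strata of the moduli space of genus-two abelian differentials, these are exactly the two alternatives listed.

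Next I would apply the preceding covering lemma, which yields $\mathcal{L}(S,\omega_S)\subset\mathcal{L}(R,p^*\omega_S)$. Finally, by Bainbridge's theorem~\cite{Bainbridge} recalled just above, the Kontsevich--Zorich spectrum of a genus-two abelian differential equals $\{1,1/3\}$ on $H(2)$ and $\{1,1/2\}$ on $H(1,1)$. Plugging these values into the inclusion produces exactly the two assertions of the corollary: $\mathcal{L}(R,p^*\omega_S)\supset\{1,1/3\}$ when $(R,p^*\omega_S)\in H(2,2)$, and $\mathcal{L}(R,p^*\omega_S)\supset\{1,1/2\}$ when $(R,p^*\omega_S)\in H(1,1,1,1)$.

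I do not expect any real obstacle: the corollary is a formal consequence of two inputs that are already in hand, namely the covering lemma and Bainbridge's spectral computation. The only point that needs a moment of care is the stratification bookkeeping above --- checking that an unbranched double cover doubles the number of zeros of the differential while leaving their orders unchanged, so that the target stratum is $H(2,2)$ (respectively $H(1,1,1,1)$) rather than some other genus-three stratum --- and this is immediate from the fact that $p$ has no ramification points.
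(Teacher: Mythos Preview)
Your proposal is correct and follows exactly the paper's approach: the corollary is stated there as an immediate consequence of the covering lemma together with Bainbridge's computation of the genus-two Kontsevich--Zorich spectrum, with the same stratum bookkeeping (unbranched double covers send $H(2)$ to $H(2,2)$ and $H(1,1)$ to $H(1,1,1,1)$) recorded in the paragraph preceding the corollary. Your version simply makes the Riemann--Hurwitz and local-order checks a bit more explicit than the paper does.
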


In order to simplify the exposition, we'll present our square-tiled surfaces in a combinatorial fashion, namely, we label its unit squares (tiles) using positive integers $i=1,\dots,N$ and we consider a pair of permutations $(h,v)\in S_N\times S_N$ such that $h(i)$ (resp. $v(i)$) is the neighbor to the right (resp. on the top) of the square $i$. Since our Riemann surfaces are connected, we require that $h$ and $v$ act transitively on $\{1,\dots,N\}$. Also, since we can relabel the squares (tiles) of our surface without changing it, we'll say that $(h_0,v_0)$ is equivalent to $(h_1,v_1)$ whenever they are {\em simultaneously} conjugated (i.e., there exists $\phi\in S_N$ such that $h_1=\phi^{-1}h_0\phi$ {\em and} $v_1=\phi^{-1}v_0\phi$). Below, we will always write permutations through their cycles (and we'll write even their 1-cycles to make the total number of square tiles of our surfaces more evident). 

Consider the square-tiled surface associated to $h_{S_0}=(1,2)(3)$ and $v_{S_0}=(1,3)(2)$. It is a L-shaped square-tiled surface $S_0$ formed by 3 unit squares glued via the recipe provided by $h_{S_0}$ and $v_{S_0}$. We can form double covers $R_0$ of $S_0$ by taking two copies of $S_0$ and changing the side identifications conveniently (using a pair of permutations $h_{R_0}$ and $v_{R_0}$). For our purposes, we take $h_{R_0}=(1,2,3,4)(5,6)$ and $v_{R_0}=(1,5)(2)(3,6)(4)$.  

\begin{theorem}\label{t.2covers} The canonical (absolutely continuous) $SL(2,\mathbb{R})$-invariant ergodic probability measure $\mu_{R_0}$ supported on the (closed) $SL(2,\mathbb{R})$-orbit of the square-tiled surface $(R_0,\omega_{R_0})$ associated to 
$$h_{R_0}=(1,2,3,4)(5,6) \quad \text{\rm and} \quad v_{R_0}=(1,5)(2)(3,6)(4)$$ 
is (cuspidal) Lagrangian and its Kontsevich--Zorich spectrum is multiple:
$$\mathcal{L}(R_0,\omega_{R_0})=\{1, 1/3,1/3\}.$$
\end{theorem}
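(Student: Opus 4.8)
The plan is to exploit the degree-two unramified cover $p\colon R_0\to S_0$ to reduce the computation to the genus-two base plus a single extra two-dimensional symplectic piece. First, some bookkeeping: since $p$ is unramified, $\chi(R_0)=2\chi(S_0)$, so $R_0$ has genus $3$, and since $(S_0,\omega_{S_0})\in H(2)$ its single double zero has two preimages, each again a double zero, so $(R_0,\omega_{R_0})\in H(2,2)$. Let $\tau$ be the nontrivial deck transformation of $p$. It acts symplectically on $H^1(R_0,\mathbb{R})$, so there is a $\tau$-equivariant, symplectic and Hodge-orthogonal splitting
\[
H^1(R_0,\mathbb{R})=H^1_+(R_0)\oplus H^1_-(R_0),\qquad H^1_+(R_0)=p^{*}H^1(S_0,\mathbb{R}),
\]
with $\dim H^1_+(R_0)=2g(S_0)=4$ and $\dim H^1_-(R_0)=2g(R_0)-2g(S_0)=2$. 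Both summands are invariant under the Kontsevich--Zorich cocycle, since the latter commutes with $\tau_{*}$ and with $p^{*}$ (cf. the discussion of $\mathbb{Z}/n$-covers in \cite{Forni:Matheus:Zorich:one}).

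Next I would identify the Lyapunov exponents carried by each summand. Over the closed $SL(2,\mathbb{R})$-orbit of $(R_0,\omega_{R_0})$ the restriction of the Kontsevich--Zorich cocycle to $H^1_+(R_0)=p^{*}H^1(S_0,\mathbb{R})$ is, by $SL(2,\mathbb{R})$-equivariance of $p^{*}$, the pullback of the Kontsevich--Zorich cocycle over the closed $SL(2,\mathbb{R})$-orbit of $(S_0,\omega_{S_0})$ (the two arithmetic Teichm\"uller curves being related by a finite cover), so it contributes the genus-two spectrum $\{1,1/3,-1/3,-1\}$ recalled above from \cite{Bainbridge} (this refines the inclusion of Corollary~\ref{c.2covers}). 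On the two-dimensional symplectic piece $H^1_-(R_0)$ the cocycle has exponents $\{\lambda,-\lambda\}$ for some $\lambda\in[0,1]$, so that the non-negative part of $\mathcal{L}(R_0,\omega_{R_0})$ equals $\{1,1/3,\lambda\}$ after reordering, and the theorem reduces to the single equality $\lambda=1/3$. I expect this to be the main obstacle, since the value $1/3$ on $H^1_-(R_0)$ is a numerical coincidence, not a consequence of the general structure. To establish it I would compute $\lambda$ explicitly: determine the Veech group $\operatorname{SL}(\omega_{R_0})<SL(2,\mathbb{Z})$ from the permutations $h_{R_0},v_{R_0}$ in the usual way (see \cite{Zorich6}), compute the action on $H_1(R_0,\mathbb{Z})$ of affine representatives of a generating set, restrict to the rank-two lattice $H_1^-(R_0,\mathbb{Z})$, and read off $\lambda$ from the resulting explicit integral $SL_2$-cocycle over the Teichm\"uller curve via the Eskin--Kontsevich--Zorich formula for the sum of exponents on an invariant subbundle of a square-tiled surface \cite{Eskin:Kontsevich:Zorich}. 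Equivalently, one may compute the full sum $\lambda_1+\lambda_2+\lambda_3=\int_{R_0}(\Lambda_1+\Lambda_2+\Lambda_3)\,d\mu_{R_0}$ by the Kontsevich--Zorich formula (Corollary~\ref{cor:KZformula}) together with the Eskin--Kontsevich--Zorich evaluation of the Siegel--Veech constant of this specific $6$-square surface, obtaining the value $5/3$; combined with the splitting above, which gives $\lambda_2+\lambda_3=1/3+\lambda$, this forces $\lambda=5/3-(1+1/3)=1/3$.

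It remains to verify that $\mu_{R_0}$ is cuspidal Lagrangian. Cuspidality is automatic: $(R_0,\omega_{R_0})$ is square-tiled, so its horizontal and vertical foliations are completely periodic, and $\mu_{R_0}$, being the canonical (Lebesgue-class) $SL(2,\mathbb{R})$-invariant measure on an arithmetic Teichm\"uller curve, has a local product structure. For the Lagrangian property I would exhibit a completely periodic direction whose cylinder waist curves span a three-dimensional (hence Lagrangian) isotropic subspace of $H_1(R_0,\mathbb{R})$; the natural candidate is the vertical direction, which by $v_{R_0}=(1,5)(2)(3,6)(4)$ decomposes $R_0$ into four cylinders, and one checks directly --- using the characterization recalled after Definition~\ref{def:LagrangianFol}, namely that a completely periodic foliation is Lagrangian if and only if cutting along a suitable triple of its regular leaves yields a sphere with $2g$ paired disks removed --- that cutting along three of these four waist curves produces a six-holed sphere. (Should the vertical direction turn out to have homological dimension only $2$, one instead takes the lift to $R_0$ of a two-cylinder Lagrangian direction of the $L$-shaped base $S_0$ and checks that the cover monodromy makes the lifted waist curves pick up one more dimension, now inside $H_1^-(R_0,\mathbb{R})$.) With cuspidal Lagrangian in hand, Theorem~\ref{thm:main} reconfirms non-uniform hyperbolicity, consistently with the multiplicity-two exponent $1/3$; the genuinely new content, and the bulk of the work, is the exact evaluation $\lambda=1/3$ on the anti-invariant part $H^1_-(R_0)$.
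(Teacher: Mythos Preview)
Your approach is correct and essentially coincides with the paper's. Both arguments (i) verify the Lagrangian property via the vertical cylinder decomposition coming from $v_{R_0}=(1,5)(2)(3,6)(4)$, and (ii) pin down the third exponent by combining the inclusion $\{1,1/3\}\subset\mathcal L(R_0,\omega_{R_0})$ from the genus-two cover with the Eskin--Kontsevich--Zorich formula for the total sum $\lambda_1+\lambda_2+\lambda_3=5/3$.

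Two remarks. First, your $\tau$-eigenspace splitting $H^1_+\oplus H^1_-$ is a cleaner conceptual packaging than the paper uses (it relies only on Corollary~\ref{c.2covers}), and it explains \emph{why} exactly one exponent remains unknown; but it is not logically needed once you pass to the ``equivalently'' route via the total sum, which is what the paper actually does. Second, the nontrivial content you have deferred is the evaluation of the EKZ formula itself: in the paper this is done by computing the full $SL(2,\mathbb Z)$-orbit of $R_0$ (nine surfaces, organized into three cusps of sizes $4,2,3$) and summing the $h_{ij}/w_{ij}$ contributions cusp by cusp. For the Lagrangian check the paper simply observes that $\gamma_{(1,5)}=\gamma_{(3,6)}$ while $\gamma_{(1,5)},\gamma_{(2)},\gamma_{(4)}$ are independent, which is more direct than the sphere-with-holes criterion you propose, so your fallback plan is unnecessary.
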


\begin{proof} We begin by showing that $\mu_{R_0}$ is Lagrangian. The vertical foliation of $R_0$ has 4 cylinders $C_{(1,5)}$, $C_{(2)}$, $C_{(3,6)}$ and $C_{(4)}$ (where the subindices is composed of the labellings of all squares forming the corresponding cylinders). Denoting by $\gamma_{(1,5)}$, $\gamma_{(2)}$, $\gamma_{(3,6)}$, $\gamma_{(4)}$ the homology classes of the waist curves of these cylinders, it is easy to see that they generate a 3-dimensional subspace of $H_1(R_0,\mathbb{R})$ (because $\gamma_{(1,5)}=\gamma_{(3,6)}$ and $\gamma_{(1,5)}$, $\gamma_{(2)}$, $\gamma_{(4)}$ are linearly independent by direct calculation). Since the genus of $R$ is 3, we're done.

Next, we compute the Kontsevich--Zorich spectrum of $(R_0,\omega_{R_0})$. We'll accomplish this task with the aid of the following formula of A.~Eskin, M.~Kontsevich and A.~Zorich~\cite{Eskin:Kontsevich:Zorich} for the {\em sum} of Kontsevich--Zorich exponents associated to square-tiled surfaces:
\begin{equation}
\begin{aligned}
\lambda_1+\dots+\lambda_g&=\frac{1}{12}\sum\limits_{i=1}^n \frac{m_i(m_i+2)}{m_i+1} \\
&+\frac{1}{\#SL(2,\mathbb{Z})\cdot P_0}\sum\limits_{P_i\in SL(2,\mathbb{Z})\cdot P_0} \sum\limits_{P_i=\cup cyl_{ij}}\frac{h_{ij}}{w_{ij}}.
\end{aligned}
\end{equation}
Here, $\lambda_1,\dots,\lambda_g$ are the Kontsevich--Zorich exponents of a square-tiled surface $P_0$ of genus $g$ belonging to the stratum $H(m_1,\dots,m_n)$. Also, $SL(2,\mathbb{Z})\cdot P_0$ denotes the (finite) orbit of $P_0$ under the action of $SL(2,\mathbb{Z})$. For each $P_i\in SL(2,\mathbb{Z})\cdot P_0$, the decomposition of $P_i$ into {\em maximal} horizontal cylinders is denoted by $P_i=\cup cyl_{ij}$. Furthermore, $h_{ij}$ denotes the height of the cylinder $cyl_{ij}$ and $w_{ij}$ is the length of the waist curve of the cylinder $cyl_{ij}$. 

In the case of the genus $3$ square-tiled surface $(R_0,\omega_{R_0})\in H(2,2)$, we combine this formula for the sum of Lyapunov exponents together with our knowledge of two Lyapunov exponents of $(R_0,\omega_0)$ (from Corollary~\ref{c.2covers}) to get: 
\begin{equation}\label{e.2covers-1}
1+\lambda_2(R_0)+\frac{1}{3} = \frac{4}{9} + \frac{1}{\#SL(2,\mathbb{Z})\cdot R_0}\sum\limits_{R_i\in SL(2,\mathbb{Z})\cdot R_0} \sum\limits_{R_i=\cup cyl_{ij}}\frac{h_{ij}}{w_{ij}},
\end{equation}
where $\lambda_2(R_0)$ is the second Kontsevich--Zorich exponent of $(R_0,\omega_{R_0})$. This reduces our task to the computation of $SL(2,\mathbb{Z})\cdot R_0$. Keeping this goal in mind, we'll work with the generators $T=\left(\begin{array}{cc} 1 & 1 \\ 0 & 1\end{array}\right)$ and $J=\left(\begin{array}{cc} 0 & -1 \\ 1 & 0\end{array}\right)$ of $SL(2,\mathbb{Z})$. Their actions on a square-tiled surface presented as a pair of permutations $(h,v)$ is given by the Nielsen transformations $T(h,v)=(h,vh^{-1})$ and $J(h,v)=(v^{-1},h)$. Put
\begin{itemize}
\item $h_{R_1}:=h_{R_2}:=h_{R_3}:=h_{R_0}=(1,2,3,4)(5,6)$; 
\item $v_{R_1}:=(1,4,6)(2,5,3)$, $v_{R_2}:=(1,6,3,5)(2,4)$, \\ $v_{R_3}:=(1,2,6)(3,4,5)$;
\item $h_{R_4}:=h_{R_5}:=(1,5)(2)(3,6)(4)$;
\item $v_{R_4}:=h_{R_0}$, $v_{R_5}:=(1,6,4)(2,3,5)$;
\item $h_{R_6}:=h_{R_7}:=h_{R_8}:=(1,6,4)(2,3,5)$;
\item $v_{R_6}:=h_{R_0}$, $v_{R_7}:=(1)(2,6)(3)(4,5)$, \\ $v_{R_8}:=(1,5,3,6)(2,4)$.
\end{itemize}
Let $R_i$ be the square-tiled surface associated to $(h_{R_i},v_{R_i})$, $i=0,\dots,8$. A straightforward calculation shows that $SL(2,\mathbb{Z})\cdot R_0=\{R_0,\dots, R_8\}$ and it is organized as follows:
\begin{itemize}
\item $T(R_0)=R_1$, $T(R_1)=R_2$, $T(R_2)=R_3$, $T(R_3)=R_0$;
\item $J(R_0)=R_4$ and $J(R_3)=R_0$;
\item $T(R_4)=R_5$ and $T(R_5)=R_4$;
\item $J(R_1)=R_6$ and $J(R_6)=R_1$;
\item $T(R_6)=R_7$, $T(R_7)=R_8$, $T(R_8)=R_6$;
\item $J(R_7)=R_5$ and $J(R_5)=R_7$;
\item $J(R_8)=R_4$ and $J(R_4)=R_8$;
\item $J(R_2)=R_2$.
\end{itemize}
In the literature, each $T$-orbit is called a {\em cusp}. The above description says that $SL(2,\mathbb{Z})\cdot R_0=\mathcal{C}_1\cup\mathcal{C}_2\cup \mathcal{C}_3$ is the disjoint union of 3 cusps, namely, $\mathcal{C}_1=\{R_0,R_1,R_2,R_3\}$, $\mathcal{C}_2=\{R_4,R_5\}$ and $\mathcal{C}_3=\{R_6,R_7,R_8\}$. The contribution of two square-tiled surfaces belonging to a fixed cusp to the sum appearing in the right-hand side of~\eqref{e.2covers-1} are equal (because their horizontal permutations are the same). Therefore, it suffices to compute this contribution at an arbitrarily chosen surface inside a fixed cusp, multiply it by the length (size) of this cusp and then sum up over all cusps in order to determine the sum of the right-hand side of~\eqref{e.2covers-1}. In the case at hand, we have:
\begin{itemize}
\item each surface in the $1^{st}$ cusp $\mathcal{C}_1$ contributes with $\frac{1}{4}+\frac{1}{2}=\frac{3}{4}$ and $\#\mathcal{C}_1=4$;
\item each surface in the $2^{nd}$ cusp $\mathcal{C}_2$ contributes with $2\cdot\frac{1}{2}+2\cdot1=3$ and $\#\mathcal{C}_2=2$;
\item each surface in the $3^{rd}$ cusp $\mathcal{C_3}$ contributes with $\frac{1}{3}+\frac{1}{3}=\frac{2}{3}$ and $\mathcal{C}_3=3$.
\end{itemize} 
Plugging this information into~\eqref{e.2covers-1} (and noting that $\#SL(2,\mathbb{Z})=9$), we obtain:
\begin{equation*}
\frac{4}{3}+\lambda_2(R_0) = \frac{4}{9}+\frac{1}{9}\left(4\cdot\frac{3}{4}+2\cdot3+3\cdot\frac{2}{3}\right) = \frac{5}{3},
\end{equation*}
i.e., $\lambda_2(R_0)=1/3$ and, {\em a fortiori}, $\mathcal{L}(R_0,\omega_{R_0})=\{1, 1/3,1/3\}$.
\end{proof}

\subsection{Non-zero exponents beyond the homological dimension} \label{s.jc}
In $\S3.1$ of~\cite{Matheus:Yoccoz}, the authors introduced a family of closed $SL(2,\mathbb{R})$-orbits, indexed by odd integers $q\geq 3$, of  square-tiled surfaces $(M_q,\omega_q)$  isomorphic 
to the desigularization of the algebraic curve
$$w^{2q}=z^{q-2}(z^2-1)$$ equipped with the (unit area) Abelian differential 
$$\omega_q=\frac{2\sqrt{2\pi}}{(\Gamma(1/4))^2}\frac{z^{\frac{q-3}{2}}dz}{w^q}.$$
For a pictorial description of these square-tiled surfaces (together with the zeroes of $\omega_q$), see Figure 3 (for the general case), Figure 4 (for $q=3$) and Figure 5 (for $q=5$) of~\cite{Matheus:Yoccoz}. As pointed out in this article, $(M_3,\omega_3)$ corresponds to the totally degenerate genus $4$ example of~\cite{Forni:Matheus} (see also \cite{Forni:Matheus:Zorich:one}), so that this family maybe considered as a natural generalization of the exceptionally symmetric case $q=3$. Generally speaking, $(M_q,\omega_q)\in H(q-1,q-1,q-1)$ (i.e., $\omega_q$ has 3 zeroes of order $(q-1)$, and, a fortiori, the genus of $M_q$ is $(3q-1)/2$). However, the similarities between the cases $q=3$ and $q\geq 5$ end here: for instance, it was proved in~\cite{Matheus:Yoccoz} that the Veech group of $(M_3,\omega_3)$ is $SL(2,\mathbb{Z})$, while the Veech group of $(M_q,\omega_q)$ is 
\begin{equation}\label{e.Veech-jc}
SL(M_q,\omega_q)=\{M\in SL(2,\mathbb{Z}): M\equiv I \textrm{ or } M\equiv J \, \textrm{ mod }2\}
\end{equation}
for every (odd) $q\geq 5$. Also, the Kontsevich--Zorich spectrum of $(M_3,\omega_3)$ is maximally degenerate (i.e., all but one of the Kontsevich--Zorich exponents are zero), while $(M_q,\omega_q)$ are never totally degenerate when $q\geq 5$ (see Remark 3.2 of~\cite{Matheus:Yoccoz}). 

The next result shows that the non-maximally degenerate examples associated to $(M_q,\omega_q)$ ($q\geq 5$) have homological dimension equal to $1$:

\begin{theorem}\label{t.jc} For every odd $q\geq 5$, the canonical (absolutely continuous) $SL(2,\mathbb{R})$-invariant ergodic probability $\mu_q$ supported on the closed orbit $SL(2,\mathbb{R})\cdot (M_q,\omega_q)$ has homological dimension equal to $1$ and the number of strictly positive Kontsevich--Zorich exponents is:
\begin{itemize}
\item $1+(q-1)/4$ when $q\equiv 1$ (mod $4$) or
\item $1+(q-3)/4$ when $q\equiv 3$ (mod $4$).
\end{itemize}
\end{theorem}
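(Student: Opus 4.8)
The plan is to reduce the computation of the exponents to the explicit formula for the Lyapunov spectrum of square-tiled cyclic covers, using the Galois symmetry of $(M_q,\omega_q)$ to decompose the Kontsevich--Zorich cocycle, and then to carry out a counting argument. Since $(M_q,\omega_q)$ is a square-tiled surface, the measure $\mu_q$ is automatically cuspidal with a local product structure, so the only things to establish are the value of the homological dimension and the count of strictly positive exponents; I will treat the latter first and the former last.

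First I would recall the Galois structure of $(M_q,\omega_q)$ from \cite{Matheus:Yoccoz}, $\S3.1$: the surface is the cyclic cover $w^{2q}=z^{q-2}(z^2-1)$ of $\mathbb{P}^1$ (equivalently, via $u=z^2$, the cyclic cover $w^{4q}=u^{q-2}(u-1)^2$ of $\mathbb{P}^1$ branched over three points), and the resulting cyclic symmetry group of $M_q$ acts on $H^1(M_q,\R)$ so that the Kontsevich--Zorich cocycle is block-diagonal with respect to the induced isotypic decomposition $H^1(M_q,\C)=\bigoplus_\chi H^1_\chi$. The dimension and the Hodge type of each piece $H^1_\chi$ are given by the Chevalley--Weil formula in terms of the fractional parts of $\chi$ against the branching exponents $(q-2,2,-q)$ (mod $4q$). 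One then applies the formula of Eskin--Kontsevich--Zorich for the Lyapunov spectrum of square-tiled cyclic covers~\cite{Eskin:Kontsevich:Zorich:cyclic} --- equivalently, one combines their formula for the sum of exponents~\cite{Eskin:Kontsevich:Zorich} with the explicit description of the action of the affine group on $H^1(M_q,\R)$ in \cite{Matheus:Yoccoz} --- to obtain, for each $\chi$, the exponent $\lambda_\chi\in[0,1]$ of the corresponding real piece as an explicit expression in those fractional parts. This identifies exactly the set of characters with $\lambda_\chi>0$: one of them is the tautological character, for which $\lambda_\chi=1$ (the rank-two piece spanned by $\omega_q$ and $\bar\omega_q$), and the pieces with $\lambda_\chi=0$ are precisely those on which the form $B^\R_\omega$ of \S\ref{expformulas} degenerates identically, as in the maximally degenerate case $q=3$. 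As a consistency check one records that these pieces account for all of $H^1(M_q,\R)$, so no positive exponent is overlooked.

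The core of the argument is then the bookkeeping: enumerate the characters $\chi$ with $\lambda_\chi>0$, grouped into conjugate pairs $\{\chi,\bar\chi\}$, and count them. The arithmetic inequalities governing the sign of $\lambda_\chi$ --- inequalities among the fractional parts of $j(q-2)/4q$, $2j/4q$ and $-jq/4q$ --- fall on opposite sides of the relevant thresholds according to whether $q\equiv 1$ or $q\equiv 3\pmod{4}$, which is the source of the two cases. Carrying out the enumeration yields $1+(q-1)/4$ strictly positive exponents when $q\equiv 1\pmod{4}$ and $1+(q-3)/4$ when $q\equiv 3\pmod{4}$, the leading $1$ being the tautological exponent and the remaining ones lying strictly between $0$ and $1$. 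This fractional-part case analysis is the step I expect to be the most delicate (and the only genuinely computational one); it is also where one sees explicitly that $(M_q,\omega_q)$ is no longer maximally degenerate once $q\geq 5$.

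Finally, to see that $\mu_q$ has homological dimension $1$: since $(M_q,\omega_q)$ is a Veech surface, every completely periodic direction is a parabolic direction of the Veech group, and by~\eqref{e.Veech-jc} the Veech group $SL(M_q,\omega_q)$ is (conjugate to) $\Gamma_\theta$, which has only two cusps; hence, up to the affine group, there are only two completely periodic directions to examine. From the explicit cylinder decompositions described in \cite{Matheus:Yoccoz}, $\S3.1$, one reads off that in each of these directions $M_q$ decomposes into flat cylinders all of whose core curves are homologous up to sign, so that they span a one-dimensional isotropic subspace of $H_1(M_q,\R)$; therefore the homological dimension of $\mu_q$ equals exactly $1$. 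Since the number of strictly positive Kontsevich--Zorich exponents computed above is strictly larger than $1$ for every $q\geq 5$, this produces the desired cuspidal $\SL$-invariant measure whose number of positive exponents exceeds its homological dimension, consistently with Theorem~\ref{thm:general}, which provides only the lower bound $1$.
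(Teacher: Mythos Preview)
Your overall strategy matches the paper's: decompose $H^1(M_q,\C)$ by the cyclic Galois action, invoke the Eskin--Kontsevich--Zorich formula for square-tiled cyclic covers to identify which eigenspaces carry nonzero exponents, count them, and separately check the homological dimension from the cylinder decompositions in each cusp direction. The paper's alternative proof (the Remark following the theorem) via the explicit action of $\textrm{Aff}(M_q,\omega_q)$ on the invariant blocks $\breve H(\rho^j)$ of \cite{Matheus:Yoccoz} is exactly the second route you allude to.

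Two points where your write-up diverges from the paper and should be tightened. First, the paper does \emph{not} pass to your degree-$4q$ three-branch-point model $w^{4q}=u^{q-2}(u-1)^2$; it works directly with the degree-$2q$ four-branch-point presentation $M_{2q}(1,1,q-1,q-1)$ (i.e.\ $N=2q$, $(a_1,a_2,a_3,a_4)=(1,1,q-1,q-1)$), for which the Chevalley--Weil/EKZ formula
\[
\dim_\C V_{N-k}^{1,0}=\sum_{\mu=1}^4\Bigl\{\tfrac{ka_\mu}{N}\Bigr\}-1
=2\Bigl\{\tfrac{k}{2q}\Bigr\}+2\Bigl\{\tfrac{(q-1)k}{2q}\Bigr\}-1
\]
is immediately applicable and the criterion for a positive exponent is $\dim V_k^{1,0}=\dim V_{2q-k}^{1,0}=1$. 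With this parametrization the case analysis you flagged as ``delicate'' becomes a short parity check: one finds $\dim V_{2q-k}^{1,0}=1$ exactly when $q/2<k<3q/2$ and $k$ is odd, giving the count $\#\{q/2<k<3q/2:k\text{ odd}\}$, which equals $1+(q-1)/4$ or $1+(q-3)/4$ according to $q\bmod 4$. Your three-branch-point model has the right genus, but the EKZ formula in \cite{Eskin:Kontsevich:Zorich:cyclic} is stated for four-branch-point covers, so you would need an extra compatibility argument.

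Second, for the homological dimension you correctly reduce to the cusps of the Veech group; the paper checks the horizontal and the slope-$1$ directions and reads off from the explicit cycle descriptions in \cite{Matheus:Yoccoz} that in each direction the two cylinders have homologous waist curves ($\sigma$ and $\sigma+\zeta$, respectively), which is precisely the verification you left implicit.
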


\begin{proof}Fix $q\geq 5$ odd. The description~\eqref{e.Veech-jc} of the Veech group of $(M_q,\omega_q)$ shows that it is a index $3$ subgroup of $SL(2,\mathbb{Z})$ and 
$$T=\left(\begin{array}{cc} 1 & 1 \\ 0 & 1 \end{array}\right) \,, \quad S=\left(\begin{array}{cc} 1 & 0 \\ 1 & 1 \end{array}\right)\,,\quad ST$$ 
are distinct representatives of the cosets of $SL(2,\mathbb{Z})/SL(M_q,\omega_q)$. Therefore, we can compute the homological dimension of $\mu_q$ by calculating the dimensions of the subspaces of $H_1(M_q,\omega_q)$ generated by the homology classes of the waist curves of cylinders in the horizontal and main diagonal (i.e., slope $1$) directions. Using the notations of Section $\S3.1$ of~\cite{Matheus:Yoccoz}, we have that the waist curves of the two cylinders of $(M_q,\omega_q)$ along the horizontal direction are homologous to $\sum\limits_{i\in\mathbb{Z}/q}(\sigma_i+\sigma_i'):=\sigma$, and the waist curves of the two cylinders of $(M_q,\omega_q)$ along the main diagonal direction are homologous to $\sum\limits_{i\in\mathbb{Z}/q}(\sigma_i+\sigma_i'+\zeta_{i-1}+\zeta_{i+1}'):=\sigma+\zeta$. It follows that the homological dimension of $\mu_q$ is equal to $1$.

The computation of the number of positive Kontsevich--Zorich exponents can be done with the aid of the results of~\cite{Eskin:Kontsevich:Zorich:cyclic}: in fact, $(M_q,\omega_q)$ is a square-tiled cyclic cover of type $M_{2q}(1,1,q-1,q-1)$ (in the notation of~\cite{Forni:Matheus:Zorich:one}), so that we can use the (action $T^*$ of the) automorphism $T(z,w)=(z,\zeta w)$, $\zeta$ a primitive $2q$-th root of unit, of the algebraic curve $M_q$ to decompose the complex cohomology $H^1(M_q,\mathbb{C})$ into a direct sum of eigenspaces $V_k=Ker(T^*-\zeta^k \textrm{Id})$, $0<k<2q$. Using this decomposition, one can determine the number of positive Kontsevich--Zorich exponents:  the number of positive Kontsevich--Zorich exponents is 
\begin{equation}\label{e.FMZ}
\#\{0<k<2q: \textrm{dim}_{\mathbb{C}} V_k^{1,0} = \textrm{dim}_{\mathbb{C}} V_{N-k}^{1,0}=1\}
\end{equation}
where $V^{1,0}$ denotes the $(1,0)$-part of $V\subset H^1(M_q,\mathbb{C})$. On the other hand, for a general cyclic cover of type $M_N(a_1,a_2,a_3,a_4)$, we have that 
$$\textrm{dim}_{\mathbb{C}} V_{N-k}^{1,0}=\sum\limits_{\mu=1}^4\left\{\frac{ka_{\mu}}{N}\right\}-1$$
where $\{x\}$ is the fractional part of $x$. See, e.g.,~\cite{Eskin:Kontsevich:Zorich:cyclic} for more details. In the particular case of $(M_q,\omega_q)$, i.e., $N=2q$, $a_1=a_2=1$ and $a_3=a_4=q-1$, we get 
$$\textrm{dim}_{\mathbb{C}} V_{2q-k}^{1,0}=2\left\{\frac{k}{2q}\right\}+2\left\{\frac{(q-1)k}{2q}\right\}-1.$$
In particular,
\begin{itemize}
\item $\textrm{dim}_{\mathbb{C}} V_{2q-k}^{1,0}=0$ when $0<k<q$ and $k$ {\em even};
\item $\textrm{dim}_{\mathbb{C}} V_{2q-k}^{1,0}=1$ when $q<k<2q$ and $k$ {\em even};
\item $\textrm{dim}_{\mathbb{C}} V_{2q-k}^{1,0}=0$ when $0<k<q/2$ and $k$ {\em odd};
\item $\textrm{dim}_{\mathbb{C}} V_{2q-k}^{1,0}=2$ when $3q/2<k<2q$ and $k$ {\em odd};
\item $\textrm{dim}_{\mathbb{C}} V_{2q-k}^{1,0}=1$ when $q/2<k<3q/2$ and $k$ {\em odd};
\end{itemize}
Inserting this information into formula~\eqref{e.FMZ}, we see that the number of positive Kontsevich--Zorich exponents of $(M_q,\omega_q)$ is
$$\#\left\{\frac{q}{2}<k<\frac{3q}{2}: k \, \textrm{odd}\right\} =\left\{\begin{array}{rl}1+ (q-1)/4& \textrm{ if } q\equiv 1 (\textrm{mod }4)\\ 1+ (q-3)/4 & \textrm{ if } q\equiv 3 (\textrm{mod } 4)\end{array}\right.$$
This completes the argument.
\end{proof}

\begin{remark}
An alternative way of counting the number of positive exponents of $(M_q,\omega_q)$ uses Remark 3.2 of~\cite{Matheus:Yoccoz}. Using the notation of that paper, we have that the two-dimensional tautological subspace $H_1^{st}$ contributes with the exponent $+1$ and the other exponents come from $H_{\tau}\oplus\breve{H}$. The $(q-1)$-dimensional subspace $H_\tau$ contains only zero exponents (since the action of the affine group is through a finite group) and $\breve{H}=\bigoplus\limits_{j=1}^{q-1} \breve{H}(\rho^j)$ where $\breve{H}(\rho^j)$ are 2-dimensional invariant subspaces (under the action of the affine group) naturally indexed by the powers $\rho^j$ of the primitive $q$-th root of unit $\rho:=\exp(2\pi i/q)$. The trace of the actions of the generators $\widetilde{S}^2$, $\widetilde{T}^2$ of $\textrm{Aff}_{(1)}(M_q,\omega_q)$ (the subgroup of the affine group fixing the zeroes of $\omega_q$) as well as the trace of $\widetilde{S}^2\widetilde{T}^2$ on $\breve{H}(\rho^j)$ were calculated in~\cite{Matheus:Yoccoz}, $\S3.2$. The outcome of this calculation is the fact that these traces are $<2$ when $q/4<j<q$, while the trace of the action of $\widetilde{S}^2\widetilde{T}^2$ is $>2$ when $1\leq j < q/4$. This implies that the affine group of $(M_q,\omega_q)$ acts on $\breve{H}(\rho^j)$ by a compact subgroup (of elliptic matrices) when $q/4<j<q$, while it acts non-trivially (with some hyperbolic elements) on $\breve{H}(\rho^j)$ when $1\leq j<q/4$. Consequently, the quantity of positive exponents is 
$$1+\#\{j: 1\leq j<q/4\}=\left\{\begin{array}{rl}1+ (q-1)/4& \textrm{ if } q\equiv 1 (\textrm{mod }4)\\ 1+ (q-3)/4 & \textrm{ if } q\equiv 3 (\textrm{mod } 4)\end{array}\right.$$
\end{remark}

\begin{remark}
By the recent work of A. Eskin, M.~Kontsevich and A.~Zorich (see \cite{Eskin:Kontsevich:Zorich:cyclic}), 
it is possible to determine the precise value of the Kontsevich--Zorich exponents of $(M_q,\omega_q)$: indeed, going back to the notation of the proof of the theorem above, the main result of \cite{Eskin:Kontsevich:Zorich:cyclic} implies that every $0<k<2q$ with $\textrm{dim}_{\mathbb{C}} V_k^{1,0} = \textrm{dim}_{\mathbb{C}} V_{2q-k}^{1,0}=1$ gives rise to a (positive) Kontsevich--Zorich exponent 
$$\lambda(k)=2\min\{\{k/2q\},1-\{k/2q\}\}.$$
Taking into account the symmetry $\lambda(k)=\lambda(2q-k)$, this means that positive Kontsevich--Zorich exponents of $(M_q,\omega_q)$ are  
$$\{k/q:0<k<q,\,\textrm{dim}_{\mathbb{C}}V_k^{1,0}=\textrm{dim}_{\mathbb{C}}V_{2q-k}^{1,0}=1\}$$
where each value $k/q$ appears with multiplicity 2.
\end{remark}

\end{document}